\DeclareMathOperator{\gauss}{\mathcal{G}}
\DeclareMathOperator{\Spec}{Spec}
\DeclareMathOperator{\Proj}{Proj}
\DeclareMathOperator{\Hess}{Hess}
\DeclareMathOperator{\Sing}{Sing}
\DeclareMathOperator{\Reg}{Reg}
\DeclareMathOperator{\Rad}{Rad}
\DeclareMathOperator{\kar}{char}
\DeclareMathOperator{\Diff}{Diff}
\title[Integral points on hypersurfaces of degree at least four]{The density of integral points on hypersurfaces of degree at least four}
\author{Oscar Marmon}
\date{}
\subjclass[2000]{Primary 11G35; Secondary 11D45, 11D72}
\keywords{Integral points; Weyl differencing; van der Corput's method}
\address{Mathematical Sciences \\ Chalmers University of Technology \\ SE-412 96 Gothenburg}
\address{Mathematical Sciences \\ University of Gothenburg \\ SE-412 96 Gothenburg}
\email{marmon@chalmers.se}
\begin{document}


\newcommand{\xx}{\mathbf{x}}
\newcommand{\xxi}{\boldsymbol{\xi}}
\newcommand{\eeta}{\boldsymbol{\eta}}
\newcommand{\0}{\boldsymbol{0}}
\renewcommand{\aa}{\mathbf{a}}
\newcommand{\bb}{\mathbf{b}}
\newcommand{\uu}{\mathbf{u}}
\newcommand{\vv}{\mathbf{v}}
\newcommand{\yy}{\mathbf{y}}
\newcommand{\zz}{\mathbf{z}}
\renewcommand{\SS}{\mathbf{S}}
\newcommand{\TT}{\mathbf{T}}
\newcommand{\ZZ}{\mathbb{Z}}
\newcommand{\Zpol}{\ZZ[X_1,\ldots,X_n]}
\newcommand{\FF}{\mathbb{F}}
\newcommand{\RR}{\mathbb{R}}
\renewcommand{\AA}{\mathbb{A}}
\newcommand{\CC}{\mathbb{C}}
\newcommand{\PP}{\mathbb{P}}
\newcommand{\GG}{\mathbb{G}}
\newcommand{\QQ}{\mathbb{Q}}
\newcommand{\sB}{\mathsf{B}}
\newcommand{\cH}{\mathcal{H}}
\newcommand{\cP}{\mathcal{P}}
\newcommand{\cM}{\mathcal{M}}
\newcommand{\cN}{\mathcal{N}}
\newcommand{\cI}{\mathcal{I}}
\newcommand{\cU}{\mathcal{U}}
\newcommand{\cR}{\mathcal{R}}
\newcommand{\cQ}{\mathcal{Q}}
\newcommand{\cS}{\mathcal{S}}
\newcommand{\cB}{\mathcal{B}}
\newcommand{\cE}{\mathcal{E}}
\newcommand{\R}{\mathrm{R}}
\newcommand{\epsi}{\varepsilon}
\newcommand{\smod}[1]{\,(#1)}
\newcommand{\spmod}[1]{\,(\bmod{#1})}
\renewcommand{\theenumi}{\roman{enumi}}
\newtheorem{lemma}{Lemma}[section]
\newtheorem{prop}{Proposition}[section]
\newtheorem{thm}{Theorem}[section]
\newtheorem*{thm*}{Theorem}
\newtheorem{claim}{Claim}[section]
\newtheorem{cor}{Corollary}[section]
\theoremstyle{remark}
\newtheorem*{notation*}{Notation}
\newtheorem*{note*}{Note}
\newtheorem{note}{Note}
\newtheorem*{rem*}{Remark}
\newtheorem{rem}{Remark}[section]
\theoremstyle{definition}
\newtheorem*{def*}{Definition}
\newtheorem{definition}{Definition}[section]
\newtheorem{notation}{Notation}[section]


\begin{abstract}
Let $f$ be a polynomial of degree at least four with integer-valued coefficients. We establish new bounds for the density of integer solutions to the equation $f=0$, using an iterated version of Heath-Browns $q$-analogue of van der Corput's method of exponential sums.  
\end{abstract}


\maketitle

\section{Introduction}

Given a polynomial $f \in \ZZ[x_1,\dotsc,x_n]$ we wish to study the solutions in $\ZZ^n$ to the Diophantine equation
\begin{equation}\label{eq:f=0}
 f(x_1,\dotsc,x_n)=0.
\end{equation}
We are interested in the density of solutions, that is, for a given positive real number $B$ we want to estimate the number of solutions $\xx$ to \eqref{eq:f=0} satisfying $\vert \xx \vert \leq B$, where $\vert \xx \vert = \max_i \vert x_i \vert$. To this end we introduce the \emph{counting function}
\[
 N(f,B) = \#\{ \xx \in \ZZ^n ; f(\xx)=0, \vert \xx \vert \leq B \}.
\]
We shall use congruences as a tool to estimate $N(f,B)$. Thus, we introduce the counting functions
\[
 N(f,B,m) = \#\{ \xx \in \ZZ^n ; f(\xx) \equiv 0 \spmod m,\vert \xx \vert \leq B\}.
\]
Trivially, for any $m \in \ZZ$, $N(f,B,m)$ is an upper bound for $N(f,B)$. 
We extend this notation to systems of equations in the obvious way:
\begin{gather*}
 N(f_1,\dotsc,f_r,B) = \#\{ \xx \in \ZZ^n ; f_1(\xx)=\dotsb=f_r(\xx)=0, \vert \xx \vert \leq B\},\\
 \begin{split}
 N(f_1,\dotsc,f_r,B,m) = \#\{ &\xx \in \ZZ^n ; \\
 & f_1(\xx) \equiv \dotsb \equiv f_r(\xx) \equiv 0 \spmod m, \vert \xx \vert \leq B \}.
 \end{split}
\end{gather*}
By the \emph{leading form} of the polynomial $f$ we shall mean the homogeneous part of maximal degree.
Heath-Brown \cite{Heath-Brown} proved that for a polynomial $f\in\Zpol$ of degree at least 3 such that the leading form $F$ is non-singular (i.e. defines a non-singular hypersurface in $\PP^n_\CC$), we have the estimate
\[
 N(f,B) \ll_F B^{n-3+15/(n+5)}
\]
for $n\geq5$. To prove this, Heath-Brown studied $N(f,B,pq)$ for two different primes $p,q$, and devised a version of van der Corput's method of exponential sums as a key step in the estimation of this counting function. By incorporating an exponential sum estimate by Katz \cite{Katz} into Heath-Brown's method, the author \cite{Marmon} sharpened this result slightly, to
\[
 N(f,B) \ll_F B^{n-3+(13n-8)/(n^2+3n-2)}(\log B)^{n/2}
\]
for $n\geq6$. Salberger \cite{Salberger-Integral_points} was able to sharpen the estimate further, through a new geometric argument. He proved
\begin{equation}
\label{eq:salberger}
 N(f,B) \ll_F B^{n-3+9/(n+2)} (\log B)^{n/2}
\end{equation}
for $n\geq4$. 

For polynomials of degree at least 4, one can try to iterate the Weyl (or van der Corput) differencing step in \cite{Heath-Brown} twice to get even sharper estimates, and that is the approach we will take in this paper. The aim is to prove the following result:

\begin{thm}
\label{thm:main}
Let $f$ be a polynomial in $\ZZ[x_1,\dotsc,x_n]$ of degree $d\geq4$ with leading form $F$. Let  $Z=\Proj\ZZ[x_1,\dotsc,x_n]/(F)$, and suppose that $Z_\QQ$ is a non-singular subscheme of $\PP^{n-1}_\QQ$. Then we have the estimate
\[
N(f,B)\ll_{F} B^{n-4+(37n-18)/(n^2+8n-4)}.
\]
\end{thm}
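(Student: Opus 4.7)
The plan is to iterate the $q$-analogue of van der Corput's method of exponential sums, due to Heath-Brown \cite{Heath-Brown}, a total of two times rather than just once. The assumption $d\geq4$ is exactly what permits a second iteration: each Weyl differencing step reduces the degree by one, so starting from a form of degree $d\geq4$ two successive differencings leave us with a polynomial whose leading part has degree at most $d-2\geq2$, reducing the final exponential sum estimation to an essentially quadratic one, where sharp Gauss sum and Deligne-type bounds apply.

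More concretely, I would first select three auxiliary primes $p_1,p_2,p_3$ of sizes $B^{\theta_1},B^{\theta_2},B^{\theta_3}$ with exponents to be fixed at the end, and bound $N(f,B)$ trivially by $N(f,B,p_1p_2p_3)$. Expanding the congruence condition via additive characters leads to
\[
N(f,B,p_1p_2p_3) \ll \frac{B^n}{p_1p_2p_3} + \frac{1}{p_1p_2p_3}\sum_{\aa\not\equiv\0}\Bigl|\sum_{|\xx|\leq B}e\Bigl(\tfrac{\aa\cdot\xx+f(\xx)}{p_1p_2p_3}\Bigr)\Bigr|,
\]
where $\aa$ runs over suitable dual residues. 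A first application of Heath-Brown's $q$-analogue of van der Corput, performed with modulus $p_1$, replaces the inner sum by an average over shift vectors $\yy_1$ of exponential sums attached to the first difference $\Delta_{\yy_1}f(\xx)=f(\xx+\yy_1)-f(\xx)$ taken modulo $p_2p_3$. A second application, with modulus $p_2$, introduces a further shift $\yy_2$ and converts the resulting sum into one involving the second difference $\Delta_{\yy_2}\Delta_{\yy_1}f$ modulo $p_3$, whose leading part is a polynomial of degree $d-2$ in $\xx$ (a quadratic form when $d=4$).

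The geometric core of the argument---and the step I expect to be the main obstacle---is to control the locus in the space $\AA^{2n}$ of pairs $(\yy_1,\yy_2)$ where the second difference form $\Delta_{\yy_2}\Delta_{\yy_1}F$ becomes singular, or where its rank drops below the generic value. For generic $(\yy_1,\yy_2)$ the non-singularity of $F$ forces this form to have maximal rank, so the corresponding exponential sum modulo $p_3$ enjoys a square-root cancellation bound. For the exceptional pairs one must resort to weaker pointwise estimates and compensate by sharp codimension bounds for the bad locus. Because two differencings are being iterated, the bookkeeping is markedly more involved than in the single-iteration setting of \cite{Heath-Brown}: several nested singular loci have to be tracked at once (the singular locus of $\Delta_{\yy_1}F$ with respect to $\xx$, the additional degeneracy locus arising from $\yy_2$, and their intersection with the zero set of the second difference), and clean codimension estimates for each must be extracted from the hypothesis that $Z_\QQ$ is non-singular.

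Assembling these estimates one arrives at a bound
\[
N(f,B,p_1p_2p_3) \ll_F \frac{B^n}{p_1p_2p_3} + (\text{explicit sum of error terms in } B,p_1,p_2,p_3),
\]
where each error term is a monomial whose exponents depend on whether the generic or exceptional case is under consideration and on which of the variables $\xx,\yy_1,\yy_2$ are still free. The last step is a three-parameter optimisation: the exponents $\theta_1,\theta_2,\theta_3$ are chosen so as to equalise the dominant contributions, and a routine but somewhat lengthy piece of linear algebra in the exponents then produces the explicit value $n-4+(37n-18)/(n^2+8n-4)$ appearing in the statement. Combined with the trivial inequality $N(f,B)\leq N(f,B,p_1p_2p_3)$ this completes the proof.
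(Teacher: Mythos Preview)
Your high-level plan---three auxiliary primes, two iterated differencings, geometric control of the degenerate loci, final optimisation---matches the paper's strategy. However, your description of the differencing steps follows the original Heath-Brown scheme too literally, and as written it would not yield the exponent $(37n-18)/(n^2+8n-4)$.

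The key refinement you are missing is Salberger's idea (Remark~\ref{rem:salberger's idea} in the paper): after the first differencing with shift $\pi\yy$, one does \emph{not} retain only the difference $f(\xx+\pi\yy)-f(\xx)$, but rather the pair of congruences $pq\mid f(\xx)$ and $pq\mid f(\xx+\pi\yy)$ simultaneously. This means the relevant variety over $\FF_q$ is the complete intersection $V(F,F^\yy)$ of codimension two, not merely the hypersurface $V(F^\yy)$. The stronger geometric input this affords is precisely what sharpened Heath-Brown's exponent to Salberger's \eqref{eq:salberger}, and the same gain is essential here. Similarly, in the second differencing step the paper takes an intermediate route (Remark~\ref{rem:second diff}): it keeps the three polynomials $f,\,f^{p\zz},\,f^{\pi\yy,p\zz}$ rather than either the single second difference you describe or the full system of four. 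This choice is dictated by which geometric statements (Lemmata~\ref{lem:T_s-nonsing}--\ref{lem:T_s-special}) one can actually prove about the singular loci of the resulting varieties $\Diff_\zz(F,F^\yy)=V(F,F^\yy,F^{\yy,\zz})$.

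A second point: the paper does not expand the congruence via additive characters and estimate exponential sums directly. Instead it works throughout with weighted point counts $N_W(\cdot,B,m)$, applies Cauchy--Schwarz to split off each prime in turn, and at the innermost level invokes the Hooley--Deligne asymptotic (Proposition~\ref{hooley-deligne}) for $\FF_q$-points on the differenced varieties. The effect is similar in spirit to your exponential-sum picture, but the actual error terms $\mathcal E_0,\dotsc,\mathcal E_4$ and the delicate stratification by $\sigma_q(\zz)$ and $s_q(\yy,\zz)$ in Lemma~\ref{lem:E4} arise from this point-counting framework, not from character sums.
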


The estimate in Theorem \ref{thm:main} improves upon \eqref{eq:salberger} as soon $n \geq 17$. Moreover, if $n \geq 29$, Theorem \ref{thm:main} implies that $N(f,B) \ll_{F} B^{n-3}$.

Using an argument of Heath-Brown, we can derive a uniform version of Theorem \ref{thm:main}.

\begin{thm}
\label{thm:mainuniform}
Let $f$ be a polynomial in $\ZZ[x_1,\dotsc,x_n]$ of degree $d\geq4$ with leading form $F$. Let  $Z=\Proj\ZZ[x_1,\dotsc,x_n]/(F)$, and suppose that $Z_\QQ$ is a non-singular subscheme of $\PP^{n-1}_\QQ$. Then we have the estimate
\[
N(f,B)\ll_{n,d,\epsi} B^{n-4+(37n-18)/(n^2+8n-4)} + B^{n-3+\epsi}
\]
for any $\epsi > 0$.
\end{thm}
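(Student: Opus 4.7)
The strategy is the standard Heath-Brown uniformization argument: first render the dependence on $F$ in the implicit constant of Theorem \ref{thm:main} quantitatively explicit as a polynomial in the naive height $\|F\|$, then combine the resulting effective estimate with a uniform fallback bound via a dichotomy on the size of $\|F\|$.

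For the first step, I would trace the proof of Theorem \ref{thm:main} to locate every appearance of the form $F$ in the implicit constant. The form enters through Deligne- or Katz-type square-root cancellation in the complete exponential sums produced by the iterated van der Corput differencing, through bounds on the singular locus of $F$ reduced modulo auxiliary primes, and through resultants and discriminants of its partial derivatives. All of these are polynomial expressions in the coefficients of $F$, so tracking them produces an effective version
\begin{equation*}
 N(f,B) \leq C_1(n,d)\,\|F\|^{A}\,B^{\beta}, \qquad \beta = n-4+\frac{37n-18}{n^2+8n-4},
\end{equation*}
with $A$ depending only on $n$ and $d$. Next I would invoke a uniform fallback estimate of the shape $N(f,B)\ll_{n,d,\epsi} B^{n-3+\epsi}$, with constant independent of $F$. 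This can be obtained either by a uniform version of the geometric argument underlying Salberger's bound \eqref{eq:salberger}, in which the exponent $9/(n+2)$ is replaced by an arbitrary $\epsi$, or by a coarser Weyl-differencing argument that deliberately trades the sharp exponent $\beta$ for uniformity in $F$.

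With both estimates in hand, the combination is a dichotomy: if $\|F\|^{A}\leq B^{\epsi}$ then the effective form of Theorem \ref{thm:main} contributes at most $C_1 B^{\beta+\epsi}$, while otherwise $\|F\|$ is sufficiently large that the uniform estimate supplies $B^{n-3+\epsi}$. After renaming $\epsi$, each case fits within the claimed bound $B^{\beta}+B^{n-3+\epsi}$. The main technical obstacle is the effectivization in the first step: Deligne-type bounds for the exponential sums arising in the differencing carry constants whose $F$-dependence is nontrivial to extract, and one must either appeal to quantitative refinements by Katz or Hooley, or sieve out the exceptional primes where $F$ becomes singular modulo $q$ and bound their contribution uniformly.
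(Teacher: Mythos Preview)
Your overall shape is right---a dichotomy on the height of the coefficients---but both branches differ from the paper's argument, and the second branch has a real gap.

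For the small-coefficient case, the paper does not extract an explicit $\|F\|^{A}$ factor from the Deligne/Katz exponential sum bounds. Instead it uses Corollary~\ref{cor:veryverygood}: the set $\mathcal P(F)$ of primes for which $F$ fails the regularity conditions $(\mathrm R_i(p))$ has $\prod_{p\in\mathcal P(F)} p \leq C\|F\|^\kappa$ with $C,\kappa$ depending only on $n,d$. So if $\|f\|\ll_{n,d} B^\theta$, Bertrand's postulate furnishes admissible primes $\pi,p,q$ in the ranges \eqref{eq:pipq-relations} with implied constants depending only on $n,d$, and the proof of Theorem~\ref{thm:main} runs unchanged to give Theorem~\ref{thm:main'}. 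This sidesteps entirely the effectivization problem you flag as the ``main technical obstacle''.

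For the large-coefficient case, your proposed uniform fallback $N(f,B)\ll_{n,d,\epsi} B^{n-3+\epsi}$ is precisely what is not freely available: Salberger's \eqref{eq:salberger} has an $F$-dependent constant, and a coarser Weyl argument uniform in $F$ will not give an exponent below $n-2$. The paper's device is Lemma~\ref{lem:Siegel}: if $\|f\|$ is not polynomially bounded in $B$, then there is an auxiliary $g\in\ZZ[x_1,\dotsc,x_n]$, not a scalar multiple of $f$, vanishing on every point of $S(f,B)$. This places all the relevant integral points on a codimension-two subvariety $V(f,g)$, and the paper devotes Section~\ref{sec:dimconj} (Theorem~\ref{thm:dimgrowth}) to proving $N(f,g,B)\ll_{n,d,\epsi} B^{n-3+\epsi}$. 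That step is nontrivial---it uses birational projections, Noether--Lefschetz, and the results of Browning--Heath-Brown--Salberger---and is not something you can simply invoke. Your proposal, as written, assumes the conclusion of this entire section.
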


When proving these two theorems, it will be convenient to seek to estimate a weighted counting function rather than the original one. More precisely, let $W:\RR^n\to [0,1]$ be an infinitely differentiable function, supported on $[-2,2]^n$. 
Then we define weighted counting functions
\[
 N_W(f,B,m) = \sum_{\substack{\xx\in\ZZ^n\\m \mid f(\xx)}}W\left(\frac{1}{B}\xx\right).
\]
In the proof of Theorem \ref{thm:main} we shall take $W$ to be the function defined by
\begin{equation}
\label{eq:W}
 W(\mathbf t) = \prod_{i=1}^n w(t_i/2), \text{ where } 
 w(t)=
 \begin{cases}
  \exp(-1/(1-t^2)), & \vert t \vert < 1,\\
  0, & \vert t \vert \geq 1.
 \end{cases}
\end{equation}
It is then clear that $N(f,B,m) \ll N_W(f,B,m)$. Approximating the characteristic function of the cube $[-B,B]^n$ with a smooth function in this way allows us to sharpen some of the estimates involved.

The proof of Theorem \ref{thm:main} is carried out in Sections \ref{sec:technicalheart} and \ref{sec:proof}, and incorporates the idea of Salberger (see Remarks \ref{rem:salberger's idea} and \ref{rem:second diff}). We shall use a modulus which is a product of three distinct primes $m=\pi pq$, where the primes $\pi,p$ can be viewed as parameters connected to the two consecutive differencing steps. The two differencings put us in the position to apply results on the density of $\FF_q$-rational points on a family of new varieties over $\FF_q$, parameterized by integral $n$-tuples $\yy,\zz$. These results, behind which lie Deligne's bounds for exponential sums over non-singular varieties, become weaker as the dimensions of the singular loci of the varieties increase, and thus we need to control these dimensions. Section \ref{sec:geometry} is devoted to this problem. 


\section{Preliminary geometric results}
\label{sec:geometry}

The geometric arguments in this section extend those of Salberger \cite{Salberger-Integral_points}. A priori, some of our results are valid in characteristic zero only, but in \ref{subsec:charp} we obtain conditions on primes $p$ ensuring the truth of the statements in characteristic $p$.

\subsection{Results for polynomials over a field}
\label{subsec:char0}
In this section, suppose that $K$ is a field. Let $\kar K = p$. Furthermore, we shall assume that $n \geq 3$.

\begin{notation}
If $F \in K[x_1,\dotsc,x_n]$ is a homogeneous polynomial and $\yy \in K^n$, we define
\[
F^\yy(\xx)=\yy\cdot\nabla F(\xx)=y_1\frac{\partial F}{\partial x_1}+\dotsb+y_n\frac{\partial F}{\partial x_n}.
\]
Furthermore, for each pair $\yy,\zz$ of $n$-tuples of elements of $K$, we define
\[
F^{\yy,\zz}(\xx)=(\Hess (F))\yy\cdot \zz=\sum_{1\leq i,j\leq n}\frac{\partial^2 F}{\partial x_i\partial x_j} y_i z_j. 
\]
We have $F^{\yy,\zz} = (F^\yy)^\zz = (F^\zz)^\yy$.

For a collection $F_1,\dotsc,F_r$ of homogeneous polynomials we denote by $V(F_1,\dotsc,F_r)$ the closed subscheme of $\PP^{n-1}_K$ that they define. If $F,G$ are two homogeneous polynomials and $\zz \in K^n$, we define
\[
\Diff_\zz(F,G) = V(F,G,G^\zz).
\]
The reason for the notation is that the differencing process used in Section \ref{sec:proof} will lead us to consider such varieties. Note that the definition is not symmetric in $F$ and $G$.

When $x$ (or any other letter) is used to denote a $K$-point of $\PP^{n-1}_K$, we will use the corresponding bold letter $\xx$ to denote an element of $K^n$ representing $x$. Vice versa, given $\xx\in K^n \setminus \{\0\}$, we denote its homothety class by $x$.

We denote by $\GG(k,n-1)$ the set of $k$-dimensional linear subspaces of $\PP^{n-1}_K$.

Finally, we adopt the convention that the dimension of the empty variety is $-1$.
\end{notation}

\begin{definition}
If $V\subset\PP^{n-1}_K$ is a non-singular hypersurface defined by a homogeneous polynomial $G(x_1,\dotsc,x_n)$ of degree $d\geq 2$, then the \emph{Gauss morphism} $\gauss:V\to\PP^{n-1}_K$ is defined by $x\mapsto[\nabla G(\xx)]$, where $\nabla G(\xx)=(\frac{\partial G}{\partial x_1},\dotsc,\frac{\partial G}{\partial x_n})$. If $d$ is not divisible by $p$, it can be extended to the whole of $\PP^{n-1}_K$, since if $\nabla G(\xx)=\boldsymbol 0$ then $dG(\xx)=\xx \cdot \nabla G(\xx)=0$, so $G(\xx)=0$. Thus $\gauss$ is well-defined outside $V$. 
\end{definition}

\begin{rem}
\label{rem:finite}
It is easy to prove that the fibres of $\gauss$ are finite. In particular, this implies that the polynomial $G^\yy$, as defined above, cannot vanish identically for $\yy\neq\boldsymbol 0$, since then the image of $\PP^{n-1}_K$ under the Gauss map would be contained in a hyperplane.
\end{rem}

\begin{lemma}\label{lem:katz}
Let $X \subseteq \PP^{n-1}_K$ be an equidimensional subvariety of dimension $m$. Let $H \subset \PP^{n-1}_K$ be a hypersurface such that $X \cap H$ is equidimensional of dimension $m-1$. Then we have 
\[
(\Sing X) \cap H \subseteq \Sing(X \cap H).
\]
In particular,
\[
\dim \Sing (X \cap H) \geq \dim \Sing X -1.
\] 
\end{lemma}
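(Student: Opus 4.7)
My plan is to establish the pointwise inclusion $(\Sing X) \cap H \subseteq \Sing(X \cap H)$ via a local embedding-dimension computation, and then deduce the dimension bound using the fact that in projective space any hypersurface meets any positive-dimensional closed subscheme.

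For the inclusion, I would fix $x \in (\Sing X) \cap H$ and work in the local ring $\mathcal{O}_{X,x}$. Choose a local equation $g$ for $H$ near $x$, with image $\bar g$ in $\mathcal{O}_{X,x}$. The equidimensionality hypothesis on $X \cap H$ forces no component of $X$ through $x$ to be contained in $H$ (otherwise $X \cap H$ would pick up a component of dimension $m$), so $\bar g$ lies outside every minimal prime of $\mathcal{O}_{X,x}$, and Krull's principal ideal theorem yields $\dim \mathcal{O}_{X \cap H, x} = \dim \mathcal{O}_{X,x} - 1$. On cotangent spaces,
\[
\mathfrak{m}_{X \cap H, x}/\mathfrak{m}_{X \cap H, x}^2 = \mathfrak{m}_{X,x}/\bigl(\mathfrak{m}_{X,x}^2 + (\bar g)\bigr),
\]
so the embedding dimension decreases by at most one. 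Combining these, the singularity inequality $\dim_{\kappa(x)} \mathfrak{m}_{X,x}/\mathfrak{m}_{X,x}^2 > \dim \mathcal{O}_{X,x}$ transfers to its analogue for $\mathcal{O}_{X \cap H, x}$, and hence $x \in \Sing(X \cap H)$.

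For the dimension inequality, the only non-trivial case is $\dim \Sing X \geq 1$: here the projective dimension theorem ensures $(\Sing X) \cap H$ is non-empty with every component of dimension at least $\dim \Sing X - 1$, and the first assertion places these components inside $\Sing(X \cap H)$. When $\Sing X$ is empty or zero-dimensional, $\dim \Sing(X \cap H) \geq \dim \Sing X - 1$ holds automatically from the convention $\dim \emptyset = -1$. I do not anticipate any real obstacle; the most delicate point is invoking the equidimensionality of $X \cap H$ to guarantee that $\bar g$ is a non-zero-divisor on each component of $X$ through $x$, pinning down the Krull dimension of $\mathcal{O}_{X \cap H,x}$ relative to that of $\mathcal{O}_{X,x}$.
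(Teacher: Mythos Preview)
Your argument is correct. The paper itself omits the proof, calling the result standard, so there is nothing to compare against.

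One small redundancy worth noting: once you have $x \in X \cap H$ and the hypothesis that $X \cap H$ is equidimensional of dimension $m-1$, the equality $\dim \mathcal{O}_{X\cap H,x} = m-1$ follows immediately, without any appeal to Krull's principal ideal theorem or to $\bar g$ avoiding the minimal primes of $\mathcal{O}_{X,x}$. The remainder of the argument---bounding the drop in embedding dimension by one via the cotangent-space surjection, and then invoking the projective dimension theorem to handle the second assertion---is exactly the standard route and is carried out correctly.
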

This is a standard result, and we omit the proof.

\begin{notation}
Let $F,G \in K[x_1,\dotsc,x_n]$ be homogeneous polynomials, with $\deg G \geq 2$. For each $s=-1,0,\dotsc,n-1$, define $T_s(F,G)$ to be the Zariski closed subset of $z=[\zz]\in \PP^{n-1}_K$ such that 
\[
\dim (\Sing (\Diff_\zz(F,G))) \geq s.
\]

We define $T_{\deg}(F,G)$ to be the closed subset of $z \in \PP^{n-1}_K$ such that $\dim \Diff_\zz(F,G) = \dim V(F,G)$.
\end{notation}

We are interested in upper bounds for the dimension of $T_s(F,G)$. The version of Bertini's theorem that we shall use holds only in characteristic zero, whence the hypothesis in Lemma \ref{lem:T_s-nonsing}.

\begin{lemma}\label{lem:T_s-nonsing}
Suppose that $p = \kar K = 0$. Let $F,G \in K[x_1,\dotsc,x_n]$ be homogeneous polynomials, with $\deg G \geq 2$. Suppose that $Y=V(F,G)$ is a non-singular complete intersection of dimension $n-3$. Suppose furthermore that $V(G)$ is non-singular. Then, for $-1 \leq s \leq n-3$, we have
\[
\dim T_s(F,G) \leq n-2-s.
\]
(If $s>n-3$, then of course $T_s(F,G) = \varnothing$.)
\end{lemma}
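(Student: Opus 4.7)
My plan is to introduce the incidence variety
\[
\Sigma = \{(x,z) \in Y \times \PP^{n-1}_K : x \in \Sing \Diff_\zz(F,G)\}
\]
and prove $\dim \Sigma \leq n-2$. The lemma then follows from the fibre dimension theorem applied to $\Sigma \to \PP^{n-1}_K$, whose fibre over $z$ is $\Sing \Diff_\zz(F,G)$, of dimension $\geq s$ for $z \in T_s(F,G)$, giving $\dim T_s(F,G) \leq \dim \Sigma - s$. The locus $T_{\deg}(F,G)$ can be set aside: if $G^\zz$ vanishes on $Y$ then, since $Y$ is a non-singular complete intersection, $G^\zz$ lies in the ideal $(F,G)$, so $\Diff_\zz(F,G) = Y$ scheme-theoretically and $\Sing \Diff_\zz(F,G) = \varnothing$; hence $T_{\deg}(F,G) \cap T_s(F,G) = \varnothing$ for $s \geq 0$, and I may restrict attention to $z$ with $\dim \Diff_\zz(F,G) = n-4$.

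To analyse $\Sigma$, write $\vv_1 = \nabla F(\xx)$, $\vv_2 = \nabla G(\xx)$ and $A = \Hess(G)(\xx)$. The non-singularity of $Y$ gives linear independence of $\vv_1, \vv_2$ on $Y$, and the Jacobian criterion then identifies singularity of $\Diff_\zz(F,G)$ at $x \in Y$ with the linear condition $A\zz \in \mathrm{span}(\vv_1, \vv_2)$, using $\nabla G^\zz(\xx) = A\zz$. Euler's identity $A\xx = (d_G - 1)\vv_2$ combined with the relations $\xx \cdot \vv_1 = \xx \cdot \vv_2 = 0$ on $Y$ shows that the incidence condition $\zz \cdot \vv_2 = 0$ is automatic from the singularity condition, so the fibre $\Sigma_x$ of $\Sigma \to Y$ is the projectivisation of $A^{-1}(\mathrm{span}(\vv_1, \vv_2))$. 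A rank computation using $\vv_2 = A\xx/(d_G-1) \in A(K^n)$ yields $\dim \Sigma_x = \dim \ker A + \delta$, where $\delta \in \{0,1\}$ equals $1$ precisely when $\vv_1 \in A(K^n)$. In particular, $\dim \Sigma_x = 1$ on the open locus of $Y$ where $A$ is invertible, so the component of $\Sigma$ dominating $Y$ contributes exactly dimension $n-2$.

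The principal obstacle is bounding the strata $Y_k = \{x \in Y : \dim \Sigma_x \geq k\}$ for $k \geq 2$, which by the above computation are contained in the rank-drop loci $\{x \in Y : \mathrm{rank}\, A \leq n-k+1\}$. The needed estimate $\dim Y_k \leq n-2-k$ is where I expect to exploit the finiteness of the Gauss map $\gauss : V(G) \to \PP^{n-1}_K$ from Remark \ref{rem:finite}. In characteristic zero, generic \'etaleness of finite morphisms between smooth varieties shows that $d\gauss_x$ (which is $A$ modulo $\vv_2$, restricted to $T_x V(G)$) is injective on a dense open of $V(G)$, forcing codimension bounds on the rank-drop loci of $A$ in $V(G)$. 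Combining these with Lemma \ref{lem:katz} applied to a tower of hyperplane sections slicing $V(G)$ down to $Y$, and invoking the characteristic-zero Bertini theorem (the crucial use of the lemma's hypothesis) to ensure genericity at each stage, one propagates the codimension bounds from $V(G)$ to $Y$, giving $\dim \Sigma \leq n-2$ and completing the proof.
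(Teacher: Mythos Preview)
Your incidence-variety setup is natural and the reductions are correct: $\dim\Sigma'\le n-2$ really is equivalent to the lemma, the $T_{\deg}$ locus is handled cleanly (for $n\ge4$ the non-singular complete intersection $Y$ is integral, so $G^\zz\in(F,G)$ and $\Diff_\zz(F,G)=Y$ scheme-theoretically), and the fibre computation $\dim\Sigma'_x=\dim\ker A+\delta$ via Euler's identity is right.

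The gap is in your final paragraph, and it is the whole difficulty. Generic \'etaleness of the Gauss map only tells you that the locus in $V(G)$ where $\dim\ker A\ge1$ is a proper closed subset; it gives no codimension control for the deeper rank-drop loci $\{\dim\ker A\ge k-1\}$ that contain your strata $Y_k$. More seriously, your plan to ``slice $V(G)$ down to $Y$'' via Bertini cannot work: $Y=V(F)\cap V(G)$ for a \emph{fixed} hypersurface $V(F)$, not a generic hyperplane chosen by you, so Bertini gives you no freedom here. Nothing in the hypotheses prevents $Y$ from sitting badly with respect to the rank-drop stratification of $\Hess G$, and your argument offers no mechanism to rule this out. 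Note that the bound $\dim Y_k\le n-2-k$ you seek is in fact \emph{equivalent} to the lemma via the other projection of $\Sigma'$, so you have not broken the circle.

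The paper's proof avoids this trap by applying Bertini in a different place: to the restriction $\gauss|_Y:Y\to\PP^{n-1}$ directly. One recursively chooses a flag $\Pi_{-1}\supset\Pi_0\supset\cdots$ of linear subspaces in the \emph{target} of $\gauss$ so that each $Y\cap\gauss^{-1}(\Pi_s)$ is non-singular of dimension $n-4-s$ (this is the genuine use of characteristic zero). Then for each $z$ in the dual linear space $\Lambda_s\subset\PP^{n-1}$, one has $\Pi_s=H_\zz\cap\Gamma_z$ for some $\Gamma_z$ of codimension $s$, and Lemma~\ref{lem:katz} applied to $\Diff_\zz(F,G)\cap\gauss^{-1}(\Gamma_z)=Y\cap\gauss^{-1}(\Pi_s)$ forces $\dim\Sing\Diff_\zz(F,G)\le s-1$, whence $T_s(F,G)\cap\Lambda_s=\varnothing$ and the projective dimension theorem finishes. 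The point is that Bertini is used to choose hyperplanes in the target of $\gauss$, not to choose $F$.
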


\begin{proof}
Since $V(G)$ is non-singular, we can define the Gauss morphism 
 \[
 \gauss:\PP^{n-1}_K \to \PP^{n-1}_K,\xx \mapsto (\xi_1,\dotsc,\xi_n)=\left( \frac{\partial G}{\partial x_1},\dotsc,\frac{\partial G}{\partial x_n}\right). 
 \]
Note that, using the notation $H_\zz$ for the hyperplane $\zz \cdot \boldsymbol \xi=0$, we have $\Diff_\zz(F,G)=Y\cap\gauss^{-1}(H_\zz)$. We shall recursively find a sequence of linear subspaces $\Pi_{-1},\Pi_0,\dotsc,\Pi_{n-3}$ of $\PP^{n-1}_K$ such that $Y\cap\gauss^{-1}(\Pi_s)$ is non-singular of dimension $n-4-s$ for $s=-1,0,\dotsc,n-3$. Let $\Pi_{-1}=\PP^{n-1}_K$. Then $Y\cap\gauss^{-1}(\Pi_{-1})=Y$ is non-singular by assumption. Suppose next that we have already found a linear subspace $\Pi_s$, $s\in \{-1,0,\dotsc,n-4\}$ such that $Y_s:=Y\cap\gauss^{-1}(\Pi_s)$ is non-singular of dimension $n-4-s$, and let $\gauss_s:Y_s\to\Pi_s$ be the restriction of $\gauss$ to $Y_s$. Then, by Bertini's theorem \cite[Cor 6.11(2)]{Jouanolou}, we may find a hyperplane $\Pi_{s+1}\subset\Pi_s$ such that $\gauss_s^{-1}(\Pi_{s+1})=Y\cap\gauss^{-1}(\Pi_{s+1})$ is non-singular of dimension $n-5-s$. Here we use the fact that $K$ has characteristic zero.
 
 Now, for each $s=-1,0,\dotsc,n-3$, let $\Lambda_s$ be the $s$-dimensional linear subspace of $\PP^{n-1}_K=\Proj K[z_1,\dotsc,z_n]$ parameterizing hyperplanes $H_\zz$ such that $H_\zz\supseteq\Pi_s$. 
 We shall now prove that $T_s(F,G)\cap\Lambda_s=\varnothing$, and the statement will then follow from the projective dimension theorem. Therefore, suppose that $z=[\zz]\in\Lambda_s$. Since then $H_\zz\supseteq\Pi_s$, there is a linear subvariety $\Gamma_z\subseteq\PP^{n-1}_K$ of codimension $s$ such that $\Pi_s=H_\zz\cap \Gamma_z$. By the above, however,
\[
 Y\cap\gauss^{-1}(H_\zz)\cap\gauss^{-1}(\Gamma_z)=Y\cap\gauss^{-1}(\Pi_s)
\]
is non-singular, so by Lemma \ref{lem:katz} we must have
\begin{equation}
\label{eq:intersection}
 \left(\Sing (\Diff_\zz(F,G))\right) \cap \gauss^{-1}(\Gamma_z) = \varnothing.
\end{equation}
By Remark \ref{rem:finite} it follows that 
\[
\dim \gauss^{-1}(\Gamma_z)=\dim \Gamma_z=n-1-s.
\]
Now (\ref{eq:intersection}), along with the projective dimension theorem, implies that
\[
\dim(\Sing (\Diff_\zz(F,G)))\leq s-1.
\]
 Thus we have $z\not\in T_s(F,G)$, as promised.
\end{proof}

For the dimension of $T_{\deg}(F,G)$, we have the following result.

\begin{lemma}
\label{lem:T_deg}
Let $F,G \in K[x_1,\dotsc,x_n]$ be homogeneous polynomials, with $p \nmid \deg G \geq 2$. 
\begin{itemize}
\item[(i)]
Suppose that $Y=V(F,G)$ is a complete intersection of dimension $n-3$. Then we have
\[
\dim T_{\deg}(F,G) \leq 1.
\] 
\item[(ii)]
Suppose furthermore that $n \geq 4$, and that both $Y$ and $V(G)$ are non-singular. Then we have $T_{\deg}(F,G) = \varnothing$.
\end{itemize}
\end{lemma}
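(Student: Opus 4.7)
The strategy is to reduce both parts of the lemma to bounding the dimension of a linear subspace of $\PP^{n-1}$ attached to each irreducible component of $V(F,G)$, and then analyze that subspace via the Gauss morphism of $V(G)$.

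First I would decompose $Y = V(F,G) = Y_1 \cup \cdots \cup Y_k$ into irreducible components; since $Y$ is a complete intersection of dimension $n-3$, each $Y_j$ has dimension exactly $n-3$ by unmixedness. A point $[\zz]$ lies in $T_{\deg}(F,G)$ precisely when $G^\zz$ fails to cut any component of $Y$ down in dimension, that is, when $G^\zz$ vanishes on some $Y_j$. Writing
\[
S_j = \{[\zz] \in \PP^{n-1} : G^\zz\vert_{Y_j} = 0\},
\]
we obtain $T_{\deg}(F,G) = \bigcup_{j} S_j$, and since $G^\zz = \zz \cdot \nabla G$ depends linearly on $\zz$, each $S_j$ is a projective linear subvariety. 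The task therefore reduces to bounding $\dim S_j$ uniformly in $j$.

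The crucial reformulation is that $[\zz] \in S_j$ if and only if $\gauss(Y_j) \subseteq H_\zz$, where $H_\zz$ denotes the hyperplane with coefficient vector $\zz$. Hence $S_j$ is the projective annihilator of the linear span $\langle \gauss(Y_j)\rangle \subseteq \PP^{n-1}$, giving
\[
\dim S_j = n-2 - \dim\langle \gauss(Y_j)\rangle.
\]
For part (i), it suffices to show $\dim\langle \gauss(Y_j)\rangle \geq n-3$. I would exploit the finiteness of the fibres of $\gauss$ from Remark \ref{rem:finite}: the preimage of any linear subvariety $L \subseteq \PP^{n-1}$ has the same dimension as $L \cap \gauss(V(G))$. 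Applying this to $L = \langle \gauss(Y_j)\rangle$, the inclusion $Y_j \subseteq \gauss^{-1}(\langle \gauss(Y_j)\rangle)$ yields $n-3 = \dim Y_j \leq \dim\langle \gauss(Y_j)\rangle$, from which (i) follows.

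For part (ii) I would upgrade this to the full identity $\langle \gauss(Y_j)\rangle = \PP^{n-1}$ by contradiction. Suppose $\gauss(Y_j) \subseteq H_\zz$ for some $\zz \neq \0$. Then $Y_j$ is a component of the $(n-3)$-dimensional variety $V(G, G^\zz)$. At a smooth point $\xx$ of $Y_j$, differentiating the identity $G^\zz\vert_{Y_j}=0$ along tangent vectors in $T_\xx Y_j$ forces
\[
\Hess(G)(\xx)\cdot\zz \in \langle \nabla F(\xx), \nabla G(\xx)\rangle
\]
for every smooth $\xx \in Y_j$. The main obstacle is to extract a contradiction from this pointwise dependence relation: the scalar coefficients expressing $\Hess(G)\cdot\zz$ in terms of $\nabla F$ and $\nabla G$ may genuinely vary over $Y_j$, so ruling out such a relation will require combining Euler-type identities for the homogeneous $F$ and $G$ with the non-degeneracy of $\Hess(G)$ at smooth points of $V(G)$ to ultimately force $\zz=\0$.
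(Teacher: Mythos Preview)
Your treatment of part (i) is essentially the paper's argument, stated more carefully. The paper also writes $\Diff_\zz(F,G)=Y\cap\gauss^{-1}(H_\zz)$, observes that $z\in T_{\deg}(F,G)$ iff $\gauss(W)\subseteq H_\zz$ for some irreducible component $W$ of $Y$, and then uses finiteness of the fibres of $\gauss$ (Remark \ref{rem:finite}) to get $\dim\gauss(W)=n-3$, so that the set of such $z$ is a linear space of dimension at most $1$. Your formulation via $S_j$ and $\dim S_j = n-2-\dim\langle\gauss(Y_j)\rangle$ is the same computation.

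For part (ii), however, your approach diverges from the paper's and does not close. The paper's argument is purely ideal-theoretic and avoids differentiation entirely: since $Y$ is a non-singular complete intersection of dimension $n-3\geq 1$, it is geometrically \emph{integral}. Hence if $\dim\Diff_\zz(F,G)=\dim Y$ for some $\zz\neq\0$, then $G^\zz$ vanishes on all of $Y_{\bar K}$, so $G^\zz\in\Rad(F,G)=(F,G)$ (the ideal being prime). Writing $G^\zz=aF+bG$ with $a,b$ homogeneous and comparing degrees ($\deg G^\zz=\deg G-1$) yields a contradiction. This is a two-line finish once you invoke integrality.

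Your route---differentiating $G^\zz\vert_Y=0$ to obtain $\Hess(G)(\xx)\zz\in\langle\nabla F(\xx),\nabla G(\xx)\rangle$ at smooth points---gives a pointwise linear dependence with coefficients varying over $Y$, and you correctly flag that you do not see how to force $\zz=\0$ from this. There is a real obstruction: contracting with $\xx$ and using Euler's identity just recovers $G^\zz(\xx)=0$, which you already know, and ``non-degeneracy of $\Hess(G)$'' is not available at every smooth point of $V(G)$ even when $V(G)$ is non-singular. So the differential-geometric path, as outlined, is a genuine gap. The missing idea is precisely the one the paper uses: pass from ``$G^\zz$ vanishes on $Y$'' to ``$G^\zz\in(F,G)$'' via integrality of $Y$, and finish by degree.
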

\begin{proof}
(i) 
As in the proof of Lemma \ref{lem:T_s-nonsing}, we have 
\[
\Diff_\zz(F,G) = Y \cap \gauss^{-1}(H_\zz).
\]
Thus, $z \in T_{\deg}(F,G)$ if and only if $W \subseteq \gauss^{-1}(H_\zz)$ for some irreducible component $W$ of $Y$. This means that $\gauss(W) \subseteq H_\zz$ for every $z \in T_{\deg}(F,G)$. By Remark \ref{rem:finite} we have $\dim \gauss(W) = \dim W = n-3$, so there is a linear subspace $L \subset \PP^{n-1}_K$ of dimension at least $n-3$ such that $L \subseteq H_\zz$ for all $z \in T_{\deg}(F,G)$. In other words,
\[
T_{\deg}(F,G) \subseteq \Sigma(L) := \{H \in \GG(n-2,n-1); L \subseteq H\}.
\]
We conclude that $\dim T_{\deg}(F,G) \leq \dim \Sigma(L) \leq 1$, proving (i).

(ii)
Since $V(G)$ is non-singular, $G^\zz$ does not vanish identically for $\zz\neq\boldsymbol 0$ by Remark \ref{rem:finite}. Thus it has degree $\deg(G)-1$. Moreover, since $Y$ is a non-singular complete intersection of dimension at least 1, it is geometrically integral. Let $Y_\zz = \Diff_\zz(F,G)$. 
 
Suppose now that $\dim Y_\zz = \dim Y$. If $\bar K$ is an algebraic closure of $K$, then we would also have $\dim (Y_\zz)_{\bar K} = \dim Y_{\bar K}$. Since $Y_{\bar K}$ is irreducible, this means that $V(G^\zz) \subseteq Y$, implying, by the homogeneous Nullstellensatz, that $G^\zz \in \Rad(F,G)$. However, the ideal $(F,G) \subset \bar K[x_1,\dotsc,x_n]$ is prime, hence radical, so we would have $G^\zz \in (F,G)$, which is impossible for degree reasons. This proves that $T_{\deg}(F,G)=\varnothing$.
\end{proof}

We shall now extend Lemma \ref{lem:T_s-nonsing} to the case of singular varieties. To this end, we shall use Bertini's theorem, in the following form.

\begin{lemma}\label{lem:bertini}
Suppose that $K$ is infinite. Let $X \subset \PP^{n-1}_K$ be a complete intersection of degree $d$ and dimension $m$. Put $\sigma = \dim \Sing X$. Then there exists a linear subspace $L \subseteq \PP^{n-1}_K$ of codimension $\sigma+1$, such that $X\cap L$ is non-singular, of degree $d$ and dimension $m-\sigma-1$.
\end{lemma}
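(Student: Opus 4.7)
The plan is to construct the desired $L$ as an intersection $L = H_1 \cap \dots \cap H_{\sigma+1}$ of $\sigma+1$ general hyperplanes, chosen one at a time so that each additional hyperplane drops the dimension of the singular locus of the current intersection by exactly one. Writing $X_0 = X$ and $X_k = X \cap H_1 \cap \dots \cap H_k$, I would establish by induction on $k \in \{0,1,\dotsc,\sigma+1\}$ that $X_k$ is a complete intersection of degree $d$ and dimension $m - k$ with $\dim \Sing X_k \leq \sigma - k$. Taking $k = \sigma+1$ then produces the lemma, since a singular locus of dimension $\leq -1$ is empty by our convention.

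For the inductive step, given $X_k$ with the stated properties, the hyperplane $H_{k+1}$ is chosen generically so that three conditions hold: (a) $H_{k+1}$ contains no irreducible component of $X_k$, whence $X_{k+1}$ is a complete intersection of pure dimension $m-k-1$ cut out by the original defining polynomials of $X$ together with $k+1$ linear forms, so the product of the degrees of the non-linear defining polynomials remains $d$; (b) $H_{k+1}$ meets $\Sing X_k$ properly, giving $\dim((\Sing X_k) \cap H_{k+1}) \leq \sigma - k - 1$; and (c) $\Sing X_{k+1} \subseteq \Sing X_k$ set-theoretically, i.e.\ $X_{k+1}$ is non-singular at every point of $(\Reg X_k) \cap H_{k+1}$. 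Combining (b) and (c) yields $\dim \Sing X_{k+1} \leq \sigma - k - 1$, closing the induction. Conditions (a) and (b) are standard genericity statements that use only the fact that $K$ is infinite.

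The key input is the Bertini-type statement (c), which I would verify by an incidence-variety argument. Form $Z \subseteq \Reg X_k \times (\PP^{n-1}_K)^\vee$ consisting of pairs $(x,H)$ with $x\in H$ and $H \supseteq T_xX_k$, this being precisely the locus where $X_k \cap H$ is forced to be singular at a regular point $x$ of $X_k$. At such a point $x$, the projective tangent space $T_xX_k$ has dimension $m-k$, so the fibre of $Z$ over $x$ in the second projection has dimension $n-2-(m-k)$, whence $\dim Z \leq (m-k)+(n-2-(m-k)) = n-2$. Its image in $(\PP^{n-1}_K)^\vee$ is therefore a proper closed subset, and a general hyperplane avoids it simultaneously with the finite union of bad loci for (a) and (b). The main (and in this case mild) obstacle is that over an infinite field of positive characteristic, generic hyperplane sections of smooth varieties may fail to be smooth as schemes; however the argument above only requires the set-theoretic containment (c), which is unaffected by this phenomenon, so the inductive construction goes through in arbitrary characteristic.
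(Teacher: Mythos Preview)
Your proposal is correct and follows essentially the same inductive hyperplane-section strategy as the paper, which simply cites Jouanolou's Bertini theorem for conditions (i)--(iii) and then says ``repeating this process''; you have spelled out the induction and supplied the standard incidence-variety proof of the Bertini step. One minor remark: your caveat about positive characteristic is unnecessary, since for the full linear system of hyperplanes in $\PP^{n-1}$ the incidence argument you give already shows that $X_{k+1}$ is genuinely smooth (not just set-theoretically) at each point of $(\Reg X_k)\cap H_{k+1}$ over any infinite field---the positive-characteristic failure of Bertini occurs only for more general linear systems pulled back through possibly inseparable morphisms, which is precisely why the paper imposes $\operatorname{char} K = 0$ in Lemma~\ref{lem:T_s-nonsing} but not here.
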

\begin{proof}
By Bertini's theorem \cite[Cor. 6.11]{Jouanolou}, there exists a hyperplane $\Gamma \subset \PP^{n-1}_K$ such that
\begin{itemize}
\item[(i)] 
$\Gamma$ intersects each irreducible component of $X$ properly,
\item[(ii)]
$\Gamma$ intersects each irreducible component of $\Sing X$ properly,
\item[(iii)]
$(\Reg X) \cap \Gamma$ is non-singular. 
\end{itemize}
Repeating this process, we get the desired result. 
\end{proof}

In fact, one can show that '$K$ is infinite' may be replaced by '$K$ has cardinality greater than some constant depending only on $n$ and $d$'. In the finite field case, one could then use the effective Bertini theorem proved by Ballico \cite{ballico}.

\begin{lemma}
\label{lem:T_s-arbitrary}
Let $F,G \in K[x_1,\dotsc,x_n]$ be homogeneous polynomials with $\deg G \geq 2$. Suppose that $Y=V(F,G)$ is a complete intersection of dimension $n-3$. Let $\tilde Y = V(G)$ and define
\[
\sigma = \max \{\dim \Sing Y, \dim \Sing \tilde Y\}.
\]
\begin{itemize}
\item[(i)]
Suppose that $p=0$. 
Then, for $-1 \leq s \leq n-3$ we have
\[
\dim T_{\sigma+s+1}(F,G) \leq n-2-s.
\]
\item[(ii)]
Suppose that $n \geq 5$ and $p \nmid \deg G$. Then we have
\[
\dim T_{\deg}(F,G) \leq \min\{\sigma,1\}.
\]
\end{itemize}
\end{lemma}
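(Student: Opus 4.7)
The strategy for both parts is to reduce to the non-singular cases (Lemmas \ref{lem:T_s-nonsing} and \ref{lem:T_deg}(ii)) by cutting with a generic linear subspace. Using Lemma \ref{lem:bertini} applied jointly to $Y$ and $\tilde Y$ (each Bertini hypothesis is an open condition on the Grassmannian, hence both can be realised simultaneously), I would choose a linear subspace $L \subset \PP^{n-1}_K$ of codimension $\sigma + 1$ such that $Y \cap L = V(F|_L, G|_L)$ and $\tilde Y \cap L = V(G|_L)$ are non-singular complete intersections inside $L \cong \PP^{n-2-\sigma}_K$, of dimensions $n - 4 - \sigma$ and $n - 3 - \sigma$ respectively.

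For part (i), applying Lemma \ref{lem:T_s-nonsing} to the pair $(F|_L, G|_L)$ in $L$ yields $\dim T_s(F|_L, G|_L) \leq n - 3 - \sigma - s$ for $-1 \leq s \leq n - 4 - \sigma$; the range $s > n - 4 - \sigma$ is vacuous since $\Diff_\zz(F,G) \subseteq Y$ has dimension at most $n - 3$. To transfer this bound, observe that for $z = [\zz] \in L$, the vector $\zz$ lies in the linear subspace of $K^n$ underlying $L$, so by the chain rule $G^\zz|_L = (G|_L)^{\tilde\zz}$, where $\tilde\zz$ denotes $\zz$ expressed in coordinates on $L$. This gives $\Diff_\zz(F,G) \cap L = \Diff_{\tilde\zz}(F|_L, G|_L)$. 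Iterating Lemma \ref{lem:katz} across the $\sigma + 1$ hyperplanes defining $L$, the singular-locus dimension shrinks by at most $\sigma + 1$, so any $z \in T_{\sigma+s+1}(F,G) \cap L$ maps to $\tilde z \in T_s(F|_L, G|_L)$. Choosing $L$ generic enough for $T_{\sigma+s+1}(F,G) \cap L$ to have the expected dimension (again an open condition), we obtain
\[
\dim T_{\sigma+s+1}(F,G) \leq \dim T_s(F|_L, G|_L) + (\sigma + 1) \leq n - 2 - s.
\]

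For part (ii), if $\sigma \geq 1$ then Lemma \ref{lem:T_deg}(i) immediately gives $\dim T_{\deg}(F,G) \leq 1 = \min\{\sigma, 1\}$. Otherwise $\sigma \leq 0$, so $n - 4 - \sigma \geq 1$ by the hypothesis $n \geq 5$; Lemma \ref{lem:T_deg}(ii) applied inside $L$ (with ``$n$'' $= n - 1 - \sigma \geq 4$) yields $T_{\deg}(F|_L, G|_L) = \varnothing$, and the $\Diff$-identification together with the proper-intersection observation $\dim(\Diff_\zz(F,G) \cap L) = \dim(Y \cap L)$ (for generic $L$ and any $z \in T_{\deg}(F,G)$) forces $T_{\deg}(F,G) \cap L = \varnothing$. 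Since $L$ has codimension $\sigma + 1$, this gives $\dim T_{\deg}(F,G) \leq \sigma$. The main obstacle is the scheme-theoretic identification of $\Diff_\zz(F,G) \cap L$ with $\Diff_{\tilde\zz}(F|_L, G|_L)$, and the verification that a single generic $L$ can be chosen satisfying all the open conditions invoked simultaneously (Bertini for $Y$ and $\tilde Y$, expected-dimension intersections with $T_{\sigma+s+1}(F,G)$ or $T_{\deg}(F,G)$, and non-containment of $\Diff_\zz(F,G)$ in $L$).
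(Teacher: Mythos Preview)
Your approach is essentially identical to the paper's: cut by a linear subspace $L$ of codimension $\sigma+1$ via Lemma~\ref{lem:bertini}, identify $\Diff_\zz(F,G)\cap L$ with $\Diff_{\tilde\zz}(F|_L,G|_L)$ for $z\in L$, iterate Lemma~\ref{lem:katz} to get $T_{\sigma+s+1}(F,G)\cap L\subseteq T_s(F|_L,G|_L)$, and then invoke the non-singular cases. Two simplifications resolve the obstacles you flag: in~(i) the paper applies the projective dimension theorem to $T_{\sigma+s+1}(F,G)$ and $L$ directly, so no separate ``expected-dimension'' genericity on $L$ is needed; and in~(ii) your concern about choosing $L$ to avoid each $\Diff_\zz(F,G)$ (an a priori infinite family of conditions) dissolves once you observe that $z\in T_{\deg}(F,G)$ forces $\Diff_\zz(F,G)$ to contain an entire irreducible component of $Y$, and the Bertini choice already guarantees that $L$ meets every component of $Y$ properly.
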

\begin{proof}
(i) In case $\sigma = -1$, the statement follows directly from Lemma \ref{lem:T_s-nonsing}, so we assume that $\sigma \geq 0$. By Lemma \ref{lem:bertini} we can find a linear subspace $L \subset \PP^{n-1}_K$ of codimension $\sigma +1$ such that $Y \cap L$ and $\tilde Y \cap L$ are non-singular. $L$ can be chosen in such a way that the degrees of the varieties are preserved and the dimensions decrease by $\sigma +1$.

Without loss of generality, assume that $L$ is given by $x_n=x_{n-1}=\dotsb=x_{n-\sigma}=0$. Then $Y_0=Y\cap L$ and $\tilde Y_0=\tilde Y \cap L$ are non-singular subschemes of $\PP^{n-\sigma-2}_K=\Proj K[x_1,\dotsc,x_{n-\sigma-1}]$. We have $Y_0 = V(F_0,G_0)$ and $\tilde Y_0 = V(G_0)$, where  
\begin{gather*}
F_0(x_1,\dotsc,x_{n-\sigma-1}) = F(x_1,\dotsc,x_{n-\sigma-1},0,\dotsc,0), \\
G_0(x_1,\dotsc,x_{n-\sigma-1}) = G(x_1,\dotsc,x_{n-\sigma-1},0,\dotsc,0).
\end{gather*}
For every $\zz=(z_1,\dotsc,z_{n-\sigma-1},0,\dotsc,0)\in L$, we have $\Diff_\zz(F,G)\cap L=\Diff_{\zz_0}(F_0,G_0)$, where $\zz_0=(z_1,\dotsc,z_{n-\sigma-1})$. 

By repeated application of Lemma \ref{lem:katz}, we have
\[
T_{\sigma+s+1}(F,G) \cap L \subseteq T_s(F_0,G_0),
\]
and by Lemma \ref{lem:T_s-nonsing} we have
\[
\dim T_{s}(F_0,G_0)\leq n-(\sigma+1)-2-s.
\] 
Hence $\dim T_{\sigma+s+1}(F,G)\leq n-2-s$ by the projective dimension theorem.

(ii) In case $\sigma \neq 0$, the statement follows directly from Lemma \ref{lem:T_deg}. Thus, suppose that $\sigma = 0$. Since $\dim T_{\deg}(F,G) = \dim T_{\deg}(F,G) \otimes {\bar K}$, we may assume that $K$ is infinite, and apply the construction above with a hyperplane $L \subset \PP^{n-1}_K$. One easily sees that 
\[
T_{\deg}(F,G) \cap L \subseteq T_{\deg}(F_0,G_0) = \varnothing,
\]
which implies that $\dim T_{\deg}(F,G) \leq 0$.
\end{proof}

We shall apply the results above in the case when $G=F^\yy$. Thus, we introduce the following notation. 

\begin{notation}
\label{not:V_y}
If $V=V(F)$ is a hypersurface of degree at least 3 in $\PP^{n-1}_K$, define
\begin{gather*}
V_\yy = V(F,F^\yy), \quad \tilde V_\yy = V(F^\yy),
\end{gather*}
for any $\yy \in K^n$, and let 
\begin{gather*}
s_\yy(V) = \dim \Sing V_\yy,\quad \tilde s_\yy(V) = \dim \Sing \tilde V_\yy,\\
\sigma_\yy(V) = \max \{s_\yy(V), \tilde s_\yy(V)\}.
\end{gather*}
Define $T_\sigma(V)$, for any $-1 \leq \sigma\leq n-1$, as the closed subset of $y \in \PP^{n-1}_K$ such that $\sigma_\yy(V) \geq \sigma$.

For any pair $(\yy,\zz) \in K^n \times K^n$, we define
\[ 
V_{\yy,\zz} = \Diff_\zz(F,F^\yy) = V(F,F^\yy,F^{\yy,\zz}).
\]
Furthermore, let $T_{\deg,\yy}(V) = T_{\deg}(F,F^\yy)$, and $T_{s,\yy}(V) = T_s(F,F^\yy)$ for $-1 \leq s \leq n-1$.
\end{notation}

\begin{lemma}
\label{lem:dimV_y}
Let $V$ be a non-singular hypersurface of degree $d \geq 3$ in $\PP^{n-1}_K$. Then we have
\[
\dim V_\yy = n-3
\]
for any $\yy \in K^n \setminus \{\mathbf 0\}$.
\end{lemma}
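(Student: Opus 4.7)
The plan is to rule out the case $\dim V_\yy = n-2$, since the projective dimension theorem together with the fact that any two hypersurfaces in $\PP^{n-1}_K$ with $n \geq 3$ have non-empty intersection immediately gives $\dim V_\yy \geq n-3$ (the upper bound $\dim V_\yy \leq n-2$ being trivial). First I observe that $V$ is irreducible: any reducible hypersurface in $\PP^{n-1}_K$ with $n-1 \geq 2$ has singular points at the pairwise intersections of its components, so non-singularity forces irreducibility; and $V$ being reduced as a non-singular scheme then forces its defining polynomial $F$ to be an irreducible form. If $\dim V_\yy = n-2$ held, then $V$ would be contained in $V(F^\yy)$, so by the Nullstellensatz $F \mid F^\yy$ in $K[x_1,\dotsc,x_n]$; since $\deg F^\yy \leq d-1 < d$, this forces $F^\yy \equiv 0$.

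It therefore suffices to show that $F^\yy \not\equiv 0$ whenever $\yy \neq \0$. Performing a linear change of coordinates taking $\yy$ to the basis vector $\mathbf{e}_n = (0,\dotsc,0,1)$, one may replace $F$ by some form $\tilde F$ that still defines a non-singular hypersurface, and the condition $F^\yy \equiv 0$ translates into $\partial \tilde F/\partial x_n \equiv 0$. Hence $\tilde F$ depends only on $x_1,\dotsc,x_{n-1}$, and the point $[0{:}\dotsb{:}0{:}1]$ lies on $V(\tilde F)$. However, each partial $\partial \tilde F/\partial x_i$ for $i<n$ is a form of degree $d-1\geq 2$ in $x_1,\dotsc,x_{n-1}$, so vanishes at the origin, while $\partial \tilde F/\partial x_n \equiv 0$; thus the entire gradient vanishes at $[0{:}\dotsb{:}0{:}1]$, contradicting non-singularity.

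The only slight subtlety is that Remark \ref{rem:finite} cannot be invoked directly in positive characteristic dividing $d$ (since in that case the Gauss morphism is not a priori defined on all of $\PP^{n-1}_K$). The coordinate-change reduction above avoids this hypothesis entirely, so the argument works uniformly in every characteristic.
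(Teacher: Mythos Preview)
Your proof is correct and follows essentially the same strategy as the paper's: establish that $F^\yy$ is not identically zero, then use irreducibility of $V$ together with the degree inequality $\deg F^\yy < \deg F$ to rule out $\dim V_\yy = n-2$.

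The differences are minor but worth noting. The paper cites Remark~\ref{rem:finite} to conclude $F^\yy \not\equiv 0$, and then passes to the algebraic closure (using that $V$ is geometrically integral) before invoking the Nullstellensatz. You instead give a direct coordinate-change argument exhibiting an explicit singular point $[0:\dotsb:0:1]$ whenever $F^\yy\equiv 0$, and you work over $K$ throughout. Your observation in the last paragraph is accurate: the reasoning in Remark~\ref{rem:finite} as stated uses the extension of the Gauss map to all of $\PP^{n-1}_K$, which is available only when $p\nmid d$, so your argument is more self-contained in that respect. One small point of phrasing: calling on ``the Nullstellensatz'' over a general field $K$ is slightly loose; what you are really using is that $F$ irreducible makes $(F)$ prime, so $V\subseteq V(F^\yy)$ at the scheme level forces $F^\yy\in\sqrt{(F)}=(F)$. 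This is of course the same content, and passing to $\bar K$ as the paper does would also settle it.
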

\begin{proof}
Let $F \in K[x_1,\dotsc,x_n]$ be a generator for the ideal of $V$. Since $V$ is non-singular, $F^\yy$ does not vanish identically by Remark \ref{rem:finite}, and thus has degree $d-1$. Moreover, since $V$ is non-singular of dimension at least $1$, it is geometrically integral.

Suppose now that $\dim V_\yy = n-2$. If $\bar K$ is an algebraic closure of $K$, then we would also have $\dim (V_\yy)_{\bar K} = n-2$. Since $V_{\bar K}$ is irreducible, this means that $V_{\bar K} \subseteq (V_\yy)_{\bar K}$, implying, by the homogeneous Nullstellensatz, that $F^\yy \in \Rad_{\bar K}(F) = (F)$. This is impossible for degree reasons. Thus $\dim V_\yy = n-3$.
\end{proof}

Applying Lemma \ref{lem:T_deg} and Lemma \ref{lem:T_s-arbitrary} in this case we get the following result.  

\begin{lemma}
\label{lem:T_s-special}
Let $V$ be a non-singular hypersurface of degree $d \geq 3$ in $\PP^{n-1}_K$. Let $\yy \in K^n \setminus \{\mathbf 0\}$ and put $\sigma_\yy = \sigma_\yy(V)$. 
\begin{itemize}
\item[(i)]
Suppose that $p=0$. Then, for $-1 \leq s \leq n-1$ we have
\[
\dim T_{\sigma_\yy+s+1,\yy}(V) \leq n-2-s.
\]
\item[(ii)]
Suppose that $n \geq 5$ and $p \nmid (d-1)$. Then we have
\[
\dim T_{\deg,\yy}(V) \leq \min\{\sigma_\yy,1\}.
\] 
\end{itemize}
\end{lemma}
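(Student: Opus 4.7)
The plan is to reduce both statements directly to Lemma~\ref{lem:T_s-arbitrary} by taking $G = F^\yy$. First I would verify that the hypotheses of that lemma are satisfied. Since $V = V(F)$ is non-singular and $\yy \neq \0$, Remark~\ref{rem:finite} guarantees that $F^\yy$ does not vanish identically, so $\deg F^\yy = d - 1 \geq 2$. Next, Lemma~\ref{lem:dimV_y} asserts that $\dim V_\yy = n - 3$, which is precisely the statement that $Y = V(F, F^\yy)$ is a complete intersection of dimension $n - 3$. With the translations $G = F^\yy$ and $\tilde Y = V(G) = \tilde V_\yy$, the quantity $\sigma$ appearing in Lemma~\ref{lem:T_s-arbitrary} is by definition equal to $\sigma_\yy(V)$, and by definition $T_s(F, F^\yy) = T_{s,\yy}(V)$ as well as $T_{\deg}(F, F^\yy) = T_{\deg,\yy}(V)$.

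For part (i), I would invoke Lemma~\ref{lem:T_s-arbitrary}(i), which applies under the assumption $p = 0$ and yields the bound $\dim T_{\sigma_\yy + s + 1, \yy}(V) \leq n - 2 - s$ in the range $-1 \leq s \leq n-3$. To extend this to $s \in \{n-2, n-1\}$, I would observe that $\Diff_\zz(F, F^\yy) \subseteq V_\yy$, so $\dim \Sing \Diff_\zz(F, F^\yy) \leq n - 3$; hence $T_{\sigma_\yy + s + 1, \yy}(V) = \varnothing$ for such $s$, and the claimed inequality holds trivially under the convention that the empty variety has dimension $-1$.

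For part (ii), the conditions $n \geq 5$ and $p \nmid (d - 1)$ match exactly the hypotheses $n \geq 5$ and $p \nmid \deg G$ of Lemma~\ref{lem:T_s-arbitrary}(ii), so that lemma applies and delivers $\dim T_{\deg,\yy}(V) \leq \min\{\sigma_\yy, 1\}$ directly.

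I do not anticipate any substantive obstacle: the entire content of the argument has already been packaged into Lemmas~\ref{lem:T_deg}, \ref{lem:T_s-arbitrary}, and \ref{lem:dimV_y}, and what remains is essentially a bookkeeping verification that the specialization $G = F^\yy$ fits the hypotheses and that the notations align.
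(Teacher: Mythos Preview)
Your proposal is correct and mirrors the paper's own argument: verify $\dim V_\yy = n-3$ via Lemma~\ref{lem:dimV_y}, then invoke Lemma~\ref{lem:T_s-arbitrary} with $G = F^\yy$, handling the cases $s \geq n-2$ trivially. The only cosmetic difference is that the paper cites Lemma~\ref{lem:T_deg} directly for part~(ii), whereas you cite Lemma~\ref{lem:T_s-arbitrary}(ii); since the latter is what actually delivers the bound $\min\{\sigma_\yy,1\}$ (Lemma~\ref{lem:T_deg} alone only gives $\leq 1$ without the refinement at $\sigma_\yy = 0$), your citation is in fact the more precise one.
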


\begin{proof}
By Lemma \ref{lem:dimV_y} we have $\dim V_\yy = n-3$. Now (i) is trivially true for $s \geq n-2$, and otherwise follows from Lemma \ref{lem:T_s-arbitrary}. Part (ii) follows from Lemma \ref{lem:T_deg}.
\end{proof}


\subsection{Results for polynomials over $\ZZ$}
\label{subsec:charp}

We have proved part (i) of Lemma \ref{lem:T_s-special} only in characteristic 0. The aim of this section is to show that it is also true in characteristic $p$ for large enough $p$. Assume throughout this section that $n \geq 3$.

\begin{notation}
\label{not:fibers}
If $f:X \to \Spec \ZZ$ is a morphism of schemes, we denote $f^{-1}((0))$ by $X_0$, and $f^{-1}((p))$ by $X_p$ for a prime $p \in \ZZ$.
\end{notation}

\begin{definition}
\label{def:R_i(p)}
Suppose that $F \in \ZZ[x_1,\dotsc,x_n]$ is a homogeneous polynomial of degree $d \geq 3$. Let $p$ be a prime number or $p=0$. Let $Z=\Proj\ZZ[x_1,\ldots,x_n]/(F)$. Recall Notation \ref{not:V_y}. We say that $F$ (or $Z$) satisfies the property  
\begin{itemize}
\item[$(\mathrm R_0(p))$] 
if $Z_{p}$ is a non-singular variety.
\item[$(\mathrm R_1(p))$]
if for every $s=-1,0,\dotsc,n-1$,
\[
  \dim T_s(Z_p)\leq n-2-s.
\]
\item[$(\mathrm R_2(p))$]
if for every $\yy\in\FF_p^n$ and every $s=-1,0,\dotsc,n-1$, 
\[
  \dim T_{\sigma_\yy(Z_p)+s+1,\yy}(Z_{p})\leq n-2-s.
\]  
\end{itemize}
\end{definition}

In section \ref{subsec:char0} it was shown that $(\mathrm R_0(0))$ implies $(\mathrm R_2(0))$. Combining the geometric results in \cite{Salberger-Integral_points} with \cite[Lemma 2]{Heath-Brown} one sees that if $F$ satisfies $(\mathrm R_0(0))$, then $F$ satisfies $(\mathrm R_0(p))$ and $(\mathrm R_1(p))$ as soon as $p$ is large enough. Our aim in this section is to show the corresponding result for $(\mathrm R_2(p))$.

\begin{notation}
\label{not:hilbert scheme}
Let $\cH$ be the Hilbert scheme parameterizing degree $d$ hypersurfaces in $\PP^{n-1}_\ZZ$. $\cH$ can be identified with $\PP^{D}_\ZZ$, where $D = \binom{n-1+d}{d}-1$, and homogeneous coordinates for $\cH$ are given by $\mathbf t=(t_I)$, where $I$ runs over all $n$-tuples $(i_1,\ldots,i_n)$ of non-negative integers such that $i_1+\dotsb+i_n=d$. If $\xx=(x_1,\dotsc,x_n)$ are homogeneous coordinates for $\PP^{n-1}_\ZZ$, then $\xx^I$ denotes the monomial $x_1^{i_1}\dotsm x_n^{i_n}$. 

Furthermore, let
\[
\mathcal P=\cH\times\PP^{n-1}_\ZZ\times\PP^{n-1}_\ZZ\times\PP^{n-1}_\ZZ.
\]
\end{notation}

\begin{notation}
\label{not:universal hypersurface}
Introduce multihomogeneous coordinates $(\aa,\yy,\zz,\xx)$ on $\cP$. Consider the following multihomogeneous polynomials: 
\begin{gather*}
 F(\mathbf a,\yy,\zz,\xx)=\sum a_I \xx^I,\\
 G(\mathbf a,\yy,\zz,\xx)= \sum y_i \frac{\partial F}{\partial x_i},\\
 H(\mathbf a,\yy,\zz,\xx)= \sum z_j \frac{\partial G}{\partial x_j} = \sum_{i,j} y_i  \frac{\partial^2 F}{\partial x_i\partial x_j} z_j.
\end{gather*}
\begin{itemize}
\item[(i)]
Let $\mathcal M$ be the closed subscheme of $\mathcal P$ defined by $F,G,H$ and all $3\times3$-minors of the matrix
\[
 \begin{bmatrix}
  {\partial F}/{\partial x_1} & \cdots & {\partial F}/{\partial x_{n}}\\
  {\partial G}/{\partial x_1} & \cdots & {\partial G}/{\partial x_{n}}\\
  {\partial H}/{\partial x_1} & \cdots & {\partial H}/{\partial x_{n}}
 \end{bmatrix},
\]
and let $\pi_{\cM}:\mathcal M\to \cH\times\PP^{n-1}_\ZZ\times\PP^{n-1}_\ZZ$ be the projection onto the first three factors.
\item[(ii)]
Let $\mathcal N$ be the closed subscheme of $\mathcal P$ defined by $F,G$ and all $2\times2$-minors of the matrix
\[
 \begin{bmatrix}
  {\partial F}/{\partial x_1} & \cdots & {\partial F}/{\partial x_{n}}\\
  {\partial G}/{\partial x_1} & \cdots & {\partial G}/{\partial x_{n}}
 \end{bmatrix},
\]
and let $\pi_{\cN}:\mathcal N\to \cH\times\PP^{n-1}_\ZZ$ be the projection onto the first two factors.
\item[(iii)]
Let $\tilde{\mathcal N}$ be the closed subscheme of $\mathcal P$ defined by $G$ and its partial derivatives ${\partial G}/{\partial x_1}, \dotsc,{\partial G}/{\partial x_{n}}$, and let $\pi_{\tilde \cN}:\tilde{\mathcal N}\to \cH\times\PP^{n-1}_\ZZ$ be the projection onto the first two factors.
\end{itemize}
\end{notation}

\begin{notation}
\label{not:S(a,y,z)}
Suppose that $a \in \cH$ and write $k=k(a)$. Suppose that $y,z \in \PP^{n-1}_k$. Then we define
\begin{gather*}
\SS(a,y,z) = \pi_{\cM}^{-1}((a,y,z)), \\
\SS(a,y) = \pi_{\cN}^{-1}((a,y)), \\
\tilde\SS(a,y) = \pi_{\tilde\cN}^{-1}((a,y)), \\
\sigma(a,y) = \max \{\dim \SS(a,y), \dim \tilde\SS(a,y)\}.
\end{gather*}
Also, for each $s\in \{-1,0,1,\dotsc,n-1\}$, define
\[
\TT_s(a,y) = \{z \in \PP^{n-1}_k; \dim \SS(a,y,z) \geq s\}.
\]
\end{notation}

$\TT_s(a,y)$ is a closed subset of $\PP^{n-1}_k$, by Chevalley's theorem on upper semicontinuity of fibre dimension \cite[Cor 13.1.5]{EGAIV(III)}. 
Let us relate Notation \ref{not:S(a,y,z)} to Notation \ref{not:V_y}. In case $k$ is a perfect field, and $V$ is the hypersurface of $\PP^{n-1}_k$ corresponding to $a$, then the Jacobian criterion \cite[\S 4.2]{Liu} implies that $\SS(a,y,z) = \Sing(V_{\yy,\zz})$, $\SS(a,y) = \Sing(V_{\yy})$ and $\tilde\SS(a,y) = \Sing(\tilde V_{\yy})$. Thus, in this case, $\TT_s(a,y) = T_{s,\yy}(V)$.

\begin{notation}
Let $\cR_2$ be the set of $a \in \cH$ such that for all $y \in \PP^{n-1}_{k(a)}$ and all $s$, we have $\dim (\TT_{\sigma(a,y)+s+1}(a,y)) \leq n-2-s$.
\end{notation}

\begin{rem}
If $\aa \in \ZZ^{D+1}$ is the tuple of coefficients of $F \in \ZZ[x_1,\dotsc,x_n]$, then $F$ satisfies $(\mathrm R_2(p))$ if and only if $\aa \pmod p$ belongs to $\cR_2(\FF_p)$.
\end{rem}

Recall that a subset of a Noetherian topological space $X$ is constructible if and only if it can be written as a finite union of locally closed subsets of $X$ \cite[\textbf{0}, 9.1.7]{EGAIII(I)}.

Our key argument in deriving a criterion for $(\mathrm R_2(p))$ is the following fact, the proof of which uses a version of 'quantifier elimination' for schemes, developed by Chevalley and Grothendieck.

\begin{lemma}
\label{lem:constructible}
$\cR_2$ is a constructible subset of $\cH$.
\end{lemma}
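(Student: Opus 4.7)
My plan is to exhibit the complement $\cH\setminus\cR_2$ as a finite union of images, under a proper projection, of locally closed subsets of $\cH\times\PP^{n-1}_\ZZ$. Constructibility of $\cR_2$ will then follow from Chevalley's theorem on images of constructible sets under morphisms of finite type between Noetherian schemes.

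The basic input is upper semicontinuity of fibre dimension. The ambient projections $\cP\to\cH\times\PP^{n-1}_\ZZ\times\PP^{n-1}_\ZZ$ and $\cP\to\cH\times\PP^{n-1}_\ZZ$ are proper, since $\PP^{n-1}_\ZZ$ is proper over $\ZZ$, so the restrictions $\pi_\cM,\pi_\cN,\pi_{\tilde\cN}$ to the closed subschemes $\cM,\cN,\tilde\cN$ are proper as well. Chevalley's theorem \cite[Cor.~13.1.5]{EGAIV(III)} therefore ensures that for every integer $t$ the loci
\[
E_t^\cN=\{(a,y):\dim\SS(a,y)\geq t\},\quad E_t^{\tilde\cN}=\{(a,y):\dim\tilde\SS(a,y)\geq t\}
\]
are closed in $\cH\times\PP^{n-1}_\ZZ$, and for every integer $s'$ the locus
\[
E_{s'}^\cM=\{(a,y,z):\dim\SS(a,y,z)\geq s'\}
\]
is closed in $\cH\times\PP^{n-1}_\ZZ\times\PP^{n-1}_\ZZ$.

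Next I stratify $\cH\setminus\cR_2$ by the values of $\sigma_0=\sigma(a,y)$ and $s$, both of which range over the finite set $\{-1,0,\dotsc,n-1\}$. Writing $\pi_1:\cH\times\PP^{n-1}_\ZZ\to\cH$ for the (proper) projection, we have $\cH\setminus\cR_2=\bigcup_{\sigma_0,s}\pi_1(U_{\sigma_0,s})$, where
\[
U_{\sigma_0,s}=\{\sigma(a,y)=\sigma_0\}\cap\{\dim\TT_{\sigma_0+s+1}(a,y)\geq n-1-s\}.
\]
The first factor equals $(E_{\sigma_0}^\cN\cup E_{\sigma_0}^{\tilde\cN})\setminus(E_{\sigma_0+1}^\cN\cup E_{\sigma_0+1}^{\tilde\cN})$ and is therefore locally closed. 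For the second factor, the restriction to the closed subscheme $E_{\sigma_0+s+1}^\cM$ of the projection forgetting $z$ is again proper, and its fibre over $(a,y)$ is precisely $\TT_{\sigma_0+s+1}(a,y)$; a second application of Chevalley's semicontinuity therefore shows that this factor is closed. Hence each $U_{\sigma_0,s}$ is locally closed, each $\pi_1(U_{\sigma_0,s})$ is constructible in $\cH$ by Chevalley's image theorem, and the finite union $\cH\setminus\cR_2$ is constructible, so $\cR_2$ is as well.

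The main delicate point is the nested, twofold application of semicontinuity needed to turn pointwise dimensional information about $\SS(a,y,z)$ into dimensional information about the fibre $\TT_{s'}(a,y)$ of the projection to $\cH\times\PP^{n-1}_\ZZ$; this is the reason for working throughout with the proper ambient scheme $\cP$ built from projective spaces, so that each projection that appears is automatically proper and Chevalley's theorem can be invoked directly.
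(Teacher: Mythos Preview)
Your argument is correct and follows essentially the same route as the paper: both exhibit the complement of $\cR_2$ as the projection to $\cH$ of a constructible subset of $\cH\times\PP^{n-1}_\ZZ$, built by a twofold application of upper semicontinuity of fibre dimension, and then invoke Chevalley's theorem on images. The only cosmetic difference is that you stratify by the exact level sets $\{\sigma(a,y)=\sigma_0\}$ whereas the paper uses the open sublevel sets $\{\sigma(a,y)\leq\sigma_0\}$; your explicit attention to properness (needed to pass from semicontinuity on the source to closedness on the target) is a point the paper leaves implicit.
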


\begin{proof}
Let $\cU_r$, for $r\in \{-1,0,1,\dotsc,n-1\}$, be the set of points $(a,y) \in \cH \times \PP^{n-1}_\ZZ$ such that $\sigma(a,y) \leq r$. Furthermore, for each pair $(s,u) \in \{-1,0,1,\dotsc,n-1\}^2$, let $\cQ_{s,u}$ be the set of points $(a,y) \in \cH \times \PP^{n-1}_\ZZ$ such that $\dim (\TT_s(a,y)) > u$. Using the semicontinuity theorem again, one sees that $\cU_r$ is open and $\cQ_{s,u}$ is closed.

Thus, the set
\[
\cS := \bigcup_{-1 \leq s,\sigma \leq n-1} \cU_{\sigma} \cap \cQ_{\sigma+s+1,n-2-s}
\]
is a constructible subset of $\cH \times \PP^{n-1}_\ZZ$. If $\pi:\cH \times \PP^{n-1}_\ZZ \to \cH$ denotes the projection onto the first factor, then by \cite[\textbf{IV}, 1.8.4]{EGAIV(I)}, $\pi(\cS)$ is a constructible subset of $\cH$. Since $\cR_2 = \cH \setminus \pi(\cS)$, and the family of constructible subsets is closed under complements, we have proved the lemma.
\end{proof}

As a consequence, we get the following result, which motivates this section. $\Vert F\Vert$ denotes the maximum of the absolute values of the coefficients of $F$.

\begin{cor}
\label{cor:veryverygood}
For each homogeneous polynomial $F \in \ZZ[x_1,\dotsc,x_n]$ of degree $d \geq 3$ defining a non-singular hypersurface in $\PP^{n-1}_\QQ$ (i.e. satisfying $(\mathrm R_0(0))$), the set of primes $\mathcal P(F)$ such that $F$ does not satisfy all of the conditions $(\mathrm R_0(p))$, $(\mathrm R_1(p))$ and $(\mathrm R_2(p))$, is finite. Furthermore, there are constants $C,\kappa$ depending only on $n$ and $d$, such that
\[
\prod_{p \in \cP(F)} p \leq C \Vert F \Vert^\kappa.
\] 
\end{cor}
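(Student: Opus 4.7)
The plan is to apply Lemma \ref{lem:constructible} together with a standard specialization argument for constructible subsets of $\cH_\ZZ = \PP^D_\ZZ$. Analogous constructibility assertions for $\cR_0$ (which is even open, being defined by the non-vanishing of the classical discriminant) and for $\cR_1$ (treated by the same quantifier-elimination argument as in Lemma \ref{lem:constructible}) allow the same reasoning to handle $(\mathrm R_0(p))$ and $(\mathrm R_1(p))$, so I concentrate below on the new content, namely the effective version of $(\mathrm R_2(p))$. By Lemma \ref{lem:T_s-special}(i), the hypothesis $(\mathrm R_0(0))$ implies $(\mathrm R_2(0))$, so the $\QQ$-point $[F]_\QQ \in \cH(\QQ)$ lies in $(\cR_2)_\QQ$.

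By Lemma \ref{lem:constructible}, $\cR_2$ is constructible in $\cH_\ZZ$, and because it is cut out from the universal schemes $\cM, \cN, \tilde \cN$ of Notation \ref{not:universal hypersurface}, its constructible structure depends only on $n$ and $d$. Fix once and for all a decomposition
\[
\cB_2 := \cH \setminus \cR_2 = \bigcup_{i=1}^N (Z_i \setminus Z'_i),
\]
as a finite union of locally closed subschemes, with $Z'_i \subseteq Z_i$ closed subschemes of $\cH_\ZZ$. After replacing $F$ by $F/\gcd$ (which affects $\Vert F \Vert$ only by a bounded factor), assume the coefficient vector $\aa \in \ZZ^{D+1}$ of $F$ is primitive, so that $[F]_{\FF_p} \in \cH(\FF_p)$ is well-defined for every prime $p$. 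For each index $i$, since $[F]_\QQ \notin Z_i \setminus Z'_i$, one of two cases occurs: either (A) $[F]_\QQ \notin Z_i$, in which case there is a homogeneous polynomial $P_i \in \ZZ[t_I]$ in the homogeneous ideal of $Z_i$ with $P_i(\aa) \neq 0$; or (B) $[F]_\QQ \in Z'_i$, in which case every homogeneous polynomial $Q$ in the ideal of $Z'_i$ gives $Q(\aa) \in \ZZ$ equal to zero in $\QQ$, hence to zero in $\ZZ$.

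In case (A), any prime $p$ with $[F]_{\FF_p} \in Z_i$ must divide the nonzero integer $P_i(\aa)$. In case (B), reducing $Q(\aa) = 0$ modulo $p$ shows that $[F]_{\FF_p} \in Z'_i$ for every prime $p$, so this index contributes no bad primes. Collecting, every bad prime $p$ for $(\mathrm R_2)$ divides $P_i(\aa)$ for some $i$ of type (A), whence
\[
\prod_{p \text{ bad for }(\mathrm R_2)} p \;\leq\; \prod_{i \in A} |P_i(\aa)| \;\ll_{n,d}\; \Vert F \Vert^{\kappa},
\]
with $\kappa = \sum_i \deg P_i$ depending only on $n$ and $d$; combining with the analogous bounds for $(\mathrm R_0)$ and $(\mathrm R_1)$ gives the corollary. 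The main point of care is the uniformity: the decomposition of $\cB_2$, and with it the finite collection $\{P_i\}$ and the exponent $\kappa$, must be chosen independently of $F$. This is automatic since $\cR_2$ is a universal object on $\cH_\ZZ$, but formally the existence of a locally closed decomposition invokes \cite[\textbf{0}, 9.1.7]{EGAIII(I)}.
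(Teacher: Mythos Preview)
Your argument is correct and follows the same underlying strategy as the paper: use the constructibility of $\cR_2$ (Lemma~\ref{lem:constructible}) together with the fact that closed conditions on $\cH_\ZZ$ specialize to every fibre, so that failure of $(\mathrm R_2(p))$ forces $p$ to divide a fixed nonzero integer polynomially bounded in $\Vert F\Vert$.

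The execution differs slightly. The paper decomposes $\cR_2$ itself as $\bigcup_i D_+(f_i)\cap S_i$ with $S_i$ closed; since $\aa\in\cR_2(\QQ)$, a single index $i$ is chosen with $f_i(\aa)\neq 0$ and $\aa\in S_i(\QQ)$. Closedness of $S_i$ forces $\aa\bmod p\in S_i(\FF_p)$ for all $p$, so every bad prime divides the single integer $f_i(\aa)$. You instead decompose the complement $\cB_2=\bigcup_i(Z_i\setminus Z_i')$ and run a two-case analysis over all $i$, obtaining a product $\prod_{i\in A}|P_i(\aa)|$. Both give the result; the paper's version is marginally cleaner because it isolates one polynomial rather than a product over indices. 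One small point to tighten: as written, your $P_i$ is chosen so that $P_i(\aa)\neq 0$, hence depends on $\aa$, so the sentence ``the finite collection $\{P_i\}$ \dots must be chosen independently of $F$'' is not literally true. What is independent of $F$ is a fixed finite generating set for each ideal $I(Z_i)$; your $P_i$ is then one of these generators, and $\kappa$ may be taken as $N\cdot\max_{i,j}\deg P_{i,j}$, which depends only on $n,d$.
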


\begin{proof}
Let $\cP_2(F)$ be the set of primes $p$ for which $F$ does not satisfy $(\mathrm R_2(p))$. Taking into account the results mentioned after Definition \ref{def:R_i(p)}, it suffices to prove that $\cP_2(F)$ is finite and that
\begin{equation}
\label{eq:prod_P2}
\prod_{p \in \cP_2(F)} p \leq C \Vert F \Vert^\kappa.
\end{equation} 
for constants $C,\kappa = O_{n,d}(1)$.

By Lemma \ref{lem:constructible}, we can write
\[
\cR_2 = \bigcup_{i=1}^k A_i \cap S_i,
\]
where the $A_i$ are open and the $S_i$ are closed. We may assume that $A_i = D_+(f_i)$ for homogeneous polynomials $f_i \in \ZZ[x_1,\dotsc,x_n]$ (see \cite[\S II.2]{Hartshorne}).

Suppose now that $F$ satisfies the hypotheses of the Corollary, and let $\aa \in \ZZ^{D+1}$ be the tuple of coefficients of $F$. Then we have $\aa \in \cR_2(\QQ)$, so there is an index $i$ such that $\aa \in S_i(\QQ)$ and $f_i(\aa) \neq 0$. For every $p \in \cP_2(F)$, we then have $p \mid f_i(\aa)$, since $\aa \in S_i(\QQ)$ implies $\aa \pmod p \in S_i(\FF_p)$. Thus, $\cP_2(F)$ is a finite set and
\[
\prod_{p \in \cP_2(F)} p \leq |f_i(\aa)| \leq \Vert f_j \Vert |\aa|^\kappa,
\]
where $\kappa = \deg f_j$.
\end{proof}


\section{Preliminary number theoretic results}
\label{sec:numbertheory}

We begin with some remarks on the results from the author's paper \cite{Marmon} that we will use.

\begin{rem}
\label{rem:Marmon}
 The error term \[D_{n+1}B^{s+1}q^{(n-r-s-2)/2}(B+q^{1/2})\] in \cite[Theorem 3.3]{Marmon} can be given by the simpler expression \[D_{n+1}B^{s+2}q^{(n-r-s-2)/2}.\] Indeed, if $q^{1/2}\gg B$, then one would  have $B^{s+2}q^{(n-r-s-2)/2}\gg B^{n-r}$, so that the theorem would be true by means of a trivial estimate, such as \cite[Lemma 3.1]{Marmon}.
\end{rem}

We shall in the proof use the weighted asymptotic formula mentioned in \cite[Remark 4.4]{Marmon}. Let us therefore state this result. Appealing to Remark \ref{rem:Marmon}, we may simplify the error term somewhat.

\begin{thm}
\label{thm:marmon}
Let $W:\RR^n \to [0,1]$ be an infinitely differentiable function supported on $[-2,2]^n$. Let $f_1,\dotsc,f_r$ be polynomials in $\ZZ[x_1,\dotsc,x_n]$ of degree at least 3, with leading forms $F_1,\dotsc,F_r$. Let
\[
 Z = \Proj \ZZ[x_1,\dotsc,x_n]/(F_1,\ldots,F_r)
\]
and suppose that $p$ and $q$ are primes, with $p \leq B \leq q$, such that both $Z_p$ and $Z_q$ are non-singular subvarieties of $\PP^{n-1}_{\FF_q}$ of dimension $n-1-r$. Then we have
\begin{multline*}
N_W(f_1,\dotsc,f_r,B,pq) 
-
p^{-r} q^{-r} N_W(0,B,pq) \\
\ll_{W,n,d,C}
B^{(n+1)/2} p^{-r/2} q^{(n-r-1)/4} + B^{(n+1)/2} p^{(n-2r)/2} q^{-1/4}\\
+ B^n p^{-(n+r-1)/2} q^{-r} + B^{n-C/2}p^{(C-r)/2}q^{-r/2}   
\end{multline*}
for any $C>0$, where $d=\max_i (\deg f_i)$.
\end{thm}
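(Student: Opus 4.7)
The plan is to prove Theorem \ref{thm:marmon} in the same overall framework as the unweighted \cite[Theorem 3.3]{Marmon}, exploiting the smoothness of $W$ to replace the rough factor $B + q^{1/2}$ there by a clean power of $B$ as noted in Remark \ref{rem:Marmon}. The starting point is the character-orthogonality identity
\[
\mathbf{1}_{pq \,\mid\, f_i(\xx) \, \forall i} \;=\; (pq)^{-r} \sum_{\aa \bmod pq} e_{pq}(\aa \cdot \mathbf{f}(\xx)),
\]
which immediately gives
\[
N_W(f_1,\ldots,f_r,B,pq) \;=\; (pq)^{-r} \sum_{\aa \bmod pq} S_W(\aa), \quad S_W(\aa) := \sum_{\xx \in \ZZ^n} W(\xx/B)\, e_{pq}(\aa \cdot \mathbf{f}(\xx)).
\]
The contribution of $\aa = \0$ is exactly $p^{-r} q^{-r} N_W(\0, B, pq)$, producing the main term on the left-hand side of the inequality. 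It remains to estimate $(pq)^{-r} \sum_{\aa \neq \0} S_W(\aa)$.

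For each non-zero $\aa$, I would apply Poisson summation in $\xx$ by grouping integers into residue classes modulo $pq$, yielding
\[
S_W(\aa) \;=\; \frac{B^n}{(pq)^n} \sum_{\bb \in \ZZ^n} \widehat{W}\!\left(\frac{B\bb}{pq}\right) T_{pq}(\aa, \bb),
\]
where $T_{pq}(\aa, \bb) = \sum_{\xx \bmod pq} e_{pq}(\aa \cdot \mathbf{f}(\xx) + \bb \cdot \xx)$ is a complete exponential sum. Because $W$ is smooth and compactly supported, its Fourier transform has rapid decay: for every $C > 0$, $\widehat{W}(\boldsymbol{\xi}) \ll_{W,C} (1+|\boldsymbol{\xi}|)^{-C}$. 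This allows truncation of the $\bb$-sum essentially to $|\bb| \lesssim pq/B$, up to a tail that, after a trivial bound on $T_{pq}$ and summation over $\aa \neq \0$, contributes the fourth error term $B^{n-C/2} p^{(C-r)/2} q^{-r/2}$; the free parameter $C$ reflects the flexibility of the Poisson truncation, and is precisely the mechanism by which smoothness of $W$ sharpens the estimate.

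For the $\bb$ retained by the truncation, I would factor $T_{pq}(\aa, \bb)$ via the Chinese Remainder Theorem into a product of a complete mod-$p$ sum and a complete mod-$q$ sum. The size of each factor depends on whether the reduction of $\aa$ modulo the relevant prime vanishes: when it is non-zero, the hypothesis that $Z_p$, respectively $Z_q$, is a non-singular complete intersection of dimension $n-1-r$ permits an appeal to Deligne's bounds (and the associated singular-series estimates as developed in \cite[\S 3]{Marmon}), yielding square-root cancellation of shape $p^{n/2}$ or $q^{n/2}$; when it vanishes, the sum reduces to a lower modulus. Splitting the range of $\aa \neq \0$ according to whether $\aa \equiv \0 \pmod{p}$ only, $\aa \equiv \0 \pmod{q}$ only, or $\aa$ is non-zero modulo both primes, and combining with the $\bb$-sum in each case, produces the first three error terms in the theorem; Remark \ref{rem:Marmon} is then invoked to replace the $B + q^{1/2}$ factor of the unweighted proof by $B$.

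The main obstacle will be the careful bookkeeping of the interplay between the Poisson truncation range for $\bb$, the bounds on $T_{pq}(\aa,\bb)$ in the various regimes, and the assumed ordering $p \leq B \leq q$, so that all error contributions combine into exactly the four stated terms with the correct exponents. Once the ranges are aligned as in \cite[Theorem 3.3]{Marmon}, the argument is essentially the unweighted one with the characteristic function of the cube replaced throughout by the smooth weight $W(\xx/B)$, and the asserted bound follows.
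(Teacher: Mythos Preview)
Your approach has a genuine gap. You propose a direct Poisson summation: detect the congruences $pq \mid f_i(\xx)$ by additive characters, then Poisson-sum in $\xx$ modulo $pq$ to reach complete sums $T_{pq}(\aa,\bb)$. The difficulty is that the hypothesis places $q \geq B$, so the modulus $pq$ exceeds $B$; Poisson summation then replaces the $\xx$-sum of length $\asymp B^n$ by a $\bb$-sum of length $\asymp (pq/B)^n$, and the square-root saving $|T_{pq}| \ll (pq)^{n/2}$ from Deligne is swamped by this long dual sum. Concretely, summing $|\widehat{W}(B\bb/pq)|\,|T_{pq}(\aa,\bb)|$ over $\bb$ and then over the $\asymp (pq)^r$ nonzero $\aa$ yields nothing sharper than the trivial bound. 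More tellingly, the shape of the asserted error terms---the exponent $(n+1)/2$ on $B$ and the $1/4$-powers of $q$---cannot arise from a single application of Deligne to a complete sum; they are the signature of a Cauchy--Schwarz step followed by differencing, which your outline omits entirely.

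The method actually behind the theorem (the paper does not prove it here but quotes it from \cite[Remark~4.4]{Marmon}; the three-prime analogue is, however, worked out in full in Lemma~\ref{lem:main}) is Heath-Brown's $q$-analogue of van der Corput differencing: sort $\xx$ into residue classes modulo $p$ (there are $\asymp p^{n-r}$ relevant classes by Proposition~\ref{hooley-deligne}), apply Cauchy--Schwarz, expand the square and shift by $p\yy$ to produce differenced polynomials, and only then estimate the resulting counts modulo $q$ via Deligne-type bounds on the differenced variety. Poisson summation enters only peripherally, through Lemma~\ref{lem:poisson}, to compare $\sum_{\xx,\yy} W_{p\yy}(\xx/B)$ with $p^{-n}\bigl(\sum_\xx W(\xx/B)\bigr)^2$; the free parameter $C$ in the fourth error term comes from that lemma, not from truncating a dual $\bb$-sum after a global Poisson as you describe. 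Your sketch is therefore not ``the same overall framework as the unweighted \cite[Theorem~3.3]{Marmon}'' but a different method that breaks down precisely in the regime $q>B$ the theorem addresses.
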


The following result is standard \cite[Lemma 3.1]{Marmon}.

\begin{prop}
\label{prop:trivial} Let $X=\Spec\mathbb{F}_{q}[x_{1},\ldots,x_{n}]/(f_{1},\ldots,f_{\rho})$
be a closed subscheme of $\mathbb{A}_{\mathbb{F}_{q}}^{n}$, and let
$d=\max_{i}(\deg f_{i})$. For any box 
\[
\sB=\left[a_{1}-b_{1},a_{1}+b_{1}\right]\times\ldots\times\left[a_{n}-b_{n},a_{n}+b_{n}\right],
\]
with $|b_{i}|\leq B$, containing at most one representative of each
congruence class modulo $q$, let $\sB_q$ be its image in $(\ZZ/q\ZZ)^n$. Then we have
\[
\#(\sB_q\cap X(\FF_q))\ll_{n,\rho,d}B^{\dim X}.
\]
\end{prop}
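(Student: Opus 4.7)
\emph{Plan.} I would prove this by induction on $n$, using the projection $\pi\colon\AA^n_{\FF_q}\to\AA^{n-1}_{\FF_q}$ that forgets the last coordinate. The base case $n=1$ is immediate: $X$ is either all of $\AA^1_{\FF_q}$, contributing $\ll B$ points of $\sB_q$, or a finite scheme of $\ll_d 1$ points.

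For the inductive step, fix $y=(x_1,\ldots,x_{n-1})$ in $\sB'_q$, the projection of $\sB_q$ onto the first $n-1$ coordinates. The fibre $\pi^{-1}(y)\cap X$ is the zero set in $\AA^1_{\FF_q}$ of the one-variable polynomials $g_i^{(y)}(x_n):=f_i(y,x_n)$, each of degree at most $d$. It is therefore either all of $\AA^1_{\FF_q}$ (precisely when every $g_i^{(y)}$ vanishes identically) or finite of cardinality at most $\rho d$. Let $Z\subseteq\AA^{n-1}_{\FF_q}$ be the closed subscheme consisting of those $y$ for which every $g_i^{(y)}$ is the zero polynomial; expanding each $f_i$ as a polynomial in $x_n$ with coefficients in $\FF_q[x_1,\ldots,x_{n-1}]$ shows that $Z$ is cut out by at most $\rho(d+1)$ polynomials of degree at most $d$ in $n-1$ variables. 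Since $\pi^{-1}(Z)\subseteq X$ has dimension exactly $\dim Z+1$, it follows that $\dim Z\leq \dim X-1$.

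Splitting the count according to whether $y$ belongs to $Z$ gives
\[
\#(\sB_q\cap X(\FF_q))\leq \rho d\cdot \#\bigl(\sB'_q\cap\overline{\pi(X)}(\FF_q)\bigr)+(2B+1)\cdot \#(\sB'_q\cap Z(\FF_q)).
\]
The Zariski closure $\overline{\pi(X)}\subseteq\AA^{n-1}_{\FF_q}$ has dimension at most $\dim X$ and, by standard elimination theory, is cut out by polynomials whose number and degrees are bounded solely in terms of $n,\rho,d$. Applying the inductive hypothesis in $n-1$ variables to both $\overline{\pi(X)}$ and $Z$ then bounds the two contributions by $\ll B^{\dim X}$ and $\ll B\cdot B^{\dim Z}\leq B^{\dim X}$ respectively, which completes the induction.

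\emph{The main obstacle} is the bookkeeping of defining equations: to invoke the inductive hypothesis for $\overline{\pi(X)}$ one has to control its defining ideal after elimination. A technically cleaner alternative avoids this by adding an inner induction on $\dim X$, decomposing $X$ into irreducible components (whose number and degrees are bounded in terms of $n,\rho,d$ by B\'ezout), and, for each positive-dimensional component, intersecting with a coordinate hyperplane $\{x_i=c\}$ along which the component is not contained; the resulting slices have dimension one less and degree bounded by $\deg X$, so the inductive bound closes.
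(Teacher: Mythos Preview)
The paper does not actually prove this proposition; it merely records it as ``standard'' and cites \cite[Lemma 3.1]{Marmon}. So there is no in-paper argument to compare against, only the student's reconstruction to assess.

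Your induction on $n$ via the coordinate projection is correct. A few small remarks. First, for $y\notin Z$ at least one $g_i^{(y)}$ is nonzero of degree $\le d$, so the fibre already has at most $d$ points; the factor $\rho d$ is harmless but unnecessary. Second, the step $\dim Z\le\dim X-1$ is fine exactly as you wrote it, since $\pi^{-1}(Z)=Z\times\AA^1\subseteq X$ scheme-theoretically. Third, the only genuinely nontrivial input is the one you flag: to feed $\overline{\pi(X)}$ back into the inductive hypothesis you must know that its ideal is generated by polynomials whose number and degrees are bounded purely in terms of $n,\rho,d$. Effective elimination theory does give such bounds (they may be tower-exponential, but that is irrelevant here since only \emph{some} dependence on $n,\rho,d$ is claimed), so the argument closes.

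Your alternative via an inner induction on $\dim X$ and hyperplane slicing is the route most commonly taken in the literature and is closer in spirit to the argument in the cited reference. It trades the elimination input for a B\'ezout-type bound on the number and degrees of irreducible components; one then slices each top-dimensional component by a coordinate hyperplane on which it is non-constant. The one point to be careful about there is that you cannot directly apply the inductive hypothesis to an irreducible component $X_j$ (you have no control over generators of its ideal); rather, you slice the ambient $X$ by $\{x_i=c\}$, which \emph{is} cut out by $\rho$ polynomials of degree $\le d$, and use that for all but boundedly many $c$ this slice has dimension $\le\dim X-1$. Either route is acceptable.
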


\begin{rem*}
The dependence on $\rho$ can be eliminated - one can show \cite[Cor V.1.5]{Kunz} that there is an ideal generated by at most $n$ elements whose radical equals the radical of $(f_{1},\ldots,f_{\rho})$.
\end{rem*}

%
%

The following asymptotic formula for the number of rational points on a complete intersection, due to Hooley \cite{Hooley}, is a consequence of the Weil conjectures \cite{Deligne}. The version below is proved in \cite[Lemma 3.2]{Marmon}.

\begin{prop}
 \label{hooley-deligne}
 Let $f_1,\ldots,f_r$ be polynomials in $\FF_q[x_1,\ldots,x_n]$ with leading forms $F_1,\ldots,F_r$, respectively. Let
 \begin{align*}
  X&=\Spec \FF_q[x_1,\ldots,x_n]/(f_1,\ldots,f_r),\\
  Z&=\Proj \FF_q[x_1,\ldots,x_n]/(F_1,\ldots,F_r).
 \end{align*}
 Suppose that $\dim Z=n-1-r$ and let $s=\dim \Sing Z$. Then
 \[
  \#X(\FF_q)=q^{n-r} + O_{n,d}\left(q^{(n-r+2+s)/2}\right),
 \]
 where $d=\max_i(\deg F_i)$.
\end{prop}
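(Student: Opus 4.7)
The strategy is to compare the affine count $\#X(\FF_q)$ with projective counts accessible to Deligne-type estimates, exploiting the hypothesis that controls $\dim \Sing Z$.

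First, I would pass to the projective closure. Introducing a new variable $x_0$, let $f_i^h \in \FF_q[x_0,\dotsc,x_n]$ denote the usual homogenization of $f_i$, and set $Y = \Proj \FF_q[x_0,\dotsc,x_n]/(f_1^h,\dotsc,f_r^h) \subseteq \PP^n_{\FF_q}$. Then $Y \cap \{x_0 = 0\} = Z$ and $X$ corresponds to the affine open $Y \setminus \{x_0 = 0\}$, so
\[
\#X(\FF_q) = \#Y(\FF_q) - \#Z(\FF_q).
\]
The hypothesis $\dim Z = n-1-r$, together with the fact that $Y$ is cut out by $r$ equations in $\PP^n$ and contains $Z$ as a hyperplane section, forces $\dim Y = n-r$, so $Y$ is a projective complete intersection of the expected dimension.

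Next, I would bound $\dim \Sing Y$. Since $Y$ is equidimensional of dimension $n-r$ and $Z = Y \cap \{x_0=0\}$ is equidimensional of dimension $n-1-r$, Lemma \ref{lem:katz} applies and gives $(\Sing Y) \cap \{x_0=0\} \subseteq \Sing Z$. Any component of $\Sing Y$ contained in $\{x_0=0\}$ therefore has dimension at most $s$, while any component not contained in $\{x_0=0\}$ meets $\{x_0=0\}$ in a set of dimension exactly one less, by the projective dimension theorem, and so has dimension at most $s+1$. In either case $\dim \Sing Y \leq s+1$.

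The main analytic input is the Deligne--Hooley point-count estimate for (possibly singular) projective complete intersections: for any complete intersection $V \subseteq \PP^N_{\FF_q}$ of dimension $m$ with $\dim \Sing V = s'$ (where $s' = -1$ corresponds to $V$ non-singular), one has
\[
\#V(\FF_q) = 1 + q + \dotsb + q^m + O_{N,\deg V}\bigl(q^{(m+s'+1)/2}\bigr),
\]
a consequence of Deligne's Weil II bounds combined with the standard control on the $\ell$-adic cohomology of singular complete intersections. Applying this to $Y$ (with $m=n-r$, $s'\leq s+1$) and to $Z$ (with $m=n-1-r$, $s'=s$) and subtracting, the polynomial tails telescope to $q^{n-r}$ and both error terms are absorbed in $O\bigl(q^{(n-r+s+2)/2}\bigr)$, yielding the claimed estimate. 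The principal nontrivial ingredient is precisely this Deligne--Hooley estimate; everything else is a direct geometric bookkeeping.
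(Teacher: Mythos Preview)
Your argument is correct. The paper does not actually prove this proposition; it simply quotes it from \cite[Lemma 3.2]{Marmon}, so there is no in-paper proof to compare against. That said, what you have written is essentially the standard route (and presumably the one in the cited reference): pass to the projective closure $Y$, use $\#X(\FF_q)=\#Y(\FF_q)-\#Z(\FF_q)$, bound $\dim\Sing Y\le s+1$ via Lemma~\ref{lem:katz} and the projective dimension theorem, and then apply the Deligne--Hooley estimate for singular complete intersections to both $Y$ and $Z$. The telescoping of the main terms and the absorption of both error terms into $O\bigl(q^{(n-r+s+2)/2}\bigr)$ are exactly as you describe. One small point worth making explicit in your write-up is that no irreducible component of $Y$ can lie inside $\{x_0=0\}$ (since such a component would have dimension at least $n-r$ but be contained in $Z$, of dimension $n-1-r$); this is what guarantees $Y$ is equidimensional of dimension $n-r$, which you need for Lemma~\ref{lem:katz}.
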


The following result is a simple exercise in Poisson summation. The argument appears in \cite{Heath-Brown}.

\begin{lemma}
\label{lem:poisson}
 Let $\phi:\RR^n\to\RR$ be an infinitely differentiable function supported in the box $[-M,M]^n$, and let $D_k$, for $k=0,1,2,3,\dotsc$, be the maximum over $\RR^n$ of all partial derivatives of $\phi$ of order $k$. Let $a$ and $B$ be real numbers such that $B\geq1$ and $1\leq a\leq B$. Then we have
 \begin{multline*}
  \sum_{\xx\in\ZZ^n} \phi\left(\frac{1}{B}\xx\right) \sum_{\yy\in\ZZ^n} \phi\left(\frac{1}{B}(\xx+a\yy)\right) \\
= a^{-n}\left(\sum_{\xx\in\ZZ^n} \phi\left(\frac{1}{B}\xx\right)\right)^2 + O_{n,M,k}\left(D_0 D_k B^{2n-k}a^{-n+k}\right)\\
 + O_{n,M,k} \left(D_k^2 B^{2(n-k)}a^{-n+k}\right).
 \end{multline*}
for any $k\in\ZZ_{\geq 0}$.
\end{lemma}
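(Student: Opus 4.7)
The approach is to apply Poisson summation to both the inner $\yy$-sum and the outer $\xx$-sum. For fixed $\xx$, the function $\uu\mapsto\phi((\xx+a\uu)/B)$ is smooth and compactly supported, and the change of variables $\uu\mapsto(B\uu-\xx)/a$ identifies its Fourier transform at $\mathbf k$ as $(B/a)^n e(\mathbf k\cdot\xx/a)\hat\phi(B\mathbf k/a)$. Poisson summation therefore yields
\[
\sum_{\yy\in\ZZ^n}\phi\!\left(\tfrac{1}{B}(\xx+a\yy)\right)=\left(\tfrac{B}{a}\right)^n\sum_{\mathbf k\in\ZZ^n}\hat\phi(B\mathbf k/a)\,e(\mathbf k\cdot\xx/a).
\]
Multiplying by $\phi(\xx/B)$, summing over $\xx$, and applying Poisson a second time to the $\xx$-sum transforms the entire left-hand side of the lemma into
\[
T=\frac{B^{2n}}{a^n}\sum_{\mathbf k,\mathbf m\in\ZZ^n}\hat\phi(B\mathbf k/a)\,\hat\phi\bigl(B(\mathbf m-\mathbf k/a)\bigr).
\]

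The main term comes from $\mathbf k=\mathbf m=\0$ and equals $a^{-n}(B^n\hat\phi(\0))^2$. Comparing with $a^{-n}S^2$ via the Poisson identity $S=\sum_\xx\phi(\xx/B)=B^n\hat\phi(\0)+O(D_k B^{n-k})$, together with the trivial bound $|S|\ll D_0 B^n$, produces errors of size $O(D_0 D_k B^{2n-k}a^{-n})$ and $O(D_k^2 B^{2(n-k)}a^{-n})$, both of which fit inside the stated error terms since $a\geq 1$. The key auxiliary estimate here is the standard Fourier-decay bound $|\hat\phi(\boldsymbol\xi)|\ll_{n,M,k}D_k(\max_j|\xi_j|)^{-k}$, obtained by $k$-fold integration by parts in the coordinate carrying the largest $|\xi_j|$.

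For the tail $(\mathbf k,\mathbf m)\neq(\0,\0)$, I would use $|\hat\phi(B\mathbf k/a)|\ll D_k(B|\mathbf k|_\infty/a)^{-k}$ for $\mathbf k\neq\0$ (valid since $B|\mathbf k|_\infty/a\geq 1$) and the analogous estimate for $\hat\phi(B(\mathbf m-\mathbf k/a))$, except for the unique $\mathbf m^*\in\ZZ^n$ closest to $\mathbf k/a$, where I fall back on the trivial bound $D_0$ whenever $\mathbf k/a$ lies within $1/B$ of $\mathbf m^*$. The sum over $\mathbf m\neq\mathbf m^*$ is a convergent ``geometric'' tail contributing $\ll D_k B^{-k}$ per $\mathbf k$, and summing the two kinds of contributions over $\mathbf k\neq\0$ (using $\sum_{\mathbf k\neq\0}|\mathbf k|_\infty^{-k}=O(1)$ once $k>n$) produces precisely the two stated error terms $D_0 D_k B^{2n-k}a^{-n+k}$ and $D_k^2 B^{2(n-k)}a^{-n+k}$.

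No single step is deep; the whole argument is a double application of Poisson summation together with the standard decay bound for the Fourier transform of a smooth, compactly supported function. The only real care needed is bookkeeping: picking $k>n$ so that all frequency-side tail sums converge absolutely, and correctly separating the ``near-resonant'' contribution (where $\mathbf k/a$ is within $O(1/B)$ of an integer) from the generic ones so that one obtains exactly the two error terms appearing in the lemma.
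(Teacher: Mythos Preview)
Your approach is correct, but it is more elaborate than the paper's. The paper applies Poisson summation \emph{only} to the inner $\yy$-sum: for fixed $\xx$ one gets
\[
\sum_{\yy}\phi\!\left(\tfrac{1}{B}(\xx+a\yy)\right)=a^{-n}B^n\hat\phi(\0)+O_{n,M,k}\bigl(D_kB^{n-k}a^{-n+k}\bigr),
\]
and then rewrites $B^n\hat\phi(\0)=\sum_\vv\phi(\vv/B)+O(D_kB^{n-k})$ via a second, independent application of Poisson to the single sum $\sum_\vv\phi(\vv/B)$. After that, the outer $\xx$-sum is handled \emph{trivially}: one multiplies by $\phi(\xx/B)$ and sums, using only $\sum_\xx|\phi(\xx/B)|\ll D_0B^n+D_kB^{n-k}$. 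The two stated error terms drop out immediately from the product $(D_0B^n+D_kB^{n-k})\cdot D_kB^{n-k}a^{-n+k}$, with no need for the double-frequency expansion $T=(B^{2n}/a^n)\sum_{\mathbf k,\mathbf m}\hat\phi(B\mathbf k/a)\hat\phi(B(\mathbf m-\mathbf k/a))$ or the near-resonant/off-resonant case split in $\mathbf m$. Your route recovers the same bounds, but at the cost of a genuinely more intricate tail analysis; the paper's shortcut is to recognise that once the inner sum has been Poisson-summed, nothing is gained by Poisson-summing the outer one as well.
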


\begin{proof}
Since $\phi$ is infinitely differentiable and compactly supported, we have for the Fourier transform $\hat\phi$ the estimate
 \begin{equation}
 \label{eq:phihat}
  \hat\phi(\xxi) \ll_{n,M,k} D_k{\vert \xxi \vert}^{-k},
 \end{equation}
The function $\Phi(\xx)=\phi((1/B)\xx)$ has the Fourier transform $\hat\Phi(\xxi)=B^n \hat\phi(B\xxi)$. Thus, by Poisson's summation formula and \eqref{eq:phihat}, we get
\begin{equation}
 \label{eq:poisson_x}
 \begin{aligned}
  \sum_{\xx\in\ZZ^n} \phi\left(\frac{1}{B}\xx\right) 
  &=
  B^n \sum_{\xxi\in\ZZ^n} \hat\phi\left(B\xxi\right) \\
  &=
  B^n \hat\phi(\0) + O_{n,M,k}(D_k B^{n-k}).
 \end{aligned}
\end{equation}
For fixed $\xx$, put $\psi(\yy)=\phi((1/B)(\xx+a\yy))$. Then 
\[
\hat\psi(\eeta)=\left(\frac B a \right)^n \exp(-2\pi i a^{-1} \xx \cdot \eeta) \hat\phi \left( \frac B a \eeta \right).
\]
By Poisson's summation formula and \eqref{eq:poisson_x} we calculate
\begin{equation}
\label{eq:poisson_y}
\begin{aligned}
 \sum_{\yy\in\ZZ^n} \phi\left(\frac{1}{B}(\xx+a\yy)\right)
 &=
 \left(\frac B a \right)^n  \sum_{\eeta\in\ZZ^n} \exp(-2\pi i a^{-1} \xx \cdot \eeta) \hat\phi \left( \frac B a \eeta \right)\\
 &=
 a^{-n} \left( B^n \hat\phi(\mathbf{0}) + O_{n,M,k}(D_k B^{n-k} a^k) \right)\\
 &=
 a^{-n} \sum_{\vv\in\ZZ^n} \phi\left(\frac{1}{B}\vv\right) + O_{n,M,k}(D_k B^{n-k} a^{-n+k}).
\end{aligned}
\end{equation}
Multiplying \eqref{eq:poisson_y} by $\phi\left(\frac{1}{B}\xx\right)$, summing over $\xx \in \ZZ^n$ and using \eqref{eq:poisson_x} and \eqref{eq:phihat} we get the desired formula.
\end{proof}


\section{The density of solutions to $f(\xx) \equiv 0 \pmod {\pi pq}$}
\label{sec:technicalheart}

This subsection constitutes the technical heart of the proof of Theorem \ref{thm:main}. Let $n\geq 5$, and let $f$ be a polynomial in $\ZZ[x_1,\ldots,x_n]$ of degree $d\geq4$, with leading form $F$. Let $W$ be the infinitely differentiable weight function in \eqref{eq:W}.

\begin{notation}[Differenced polynomials]
\label{not:Diff}
For any $\yy \in \ZZ^n$, define the polynomial $f^{\yy} \in \ZZ[x_1,\dotsc,x_n]$ by
\[
f^{\yy}(\xx) = f(\xx+\yy)-f(\xx).
\]
For any pair $(\yy,\zz) \in \ZZ^n \times \ZZ^n$, define 
\[
f^{\yy,\zz}(\xx) = f(\xx+\yy+\zz) - f(\xx+\yy) - f(\xx+\zz) + f(\xx).
\]
Furthermore, let
\[
F^\yy(\xx) = \yy \cdot \nabla F(\xx) = y_1\frac{\partial F}{\partial x_1}+\dotsb+y_n\frac{\partial F}{\partial x_n}
\]
and
\[
F^{\yy,\zz}(\xx)=(\Hess (F))\yy\cdot \zz=\sum_{1\leq i,j\leq n}\frac{\partial^2 F}{\partial x_i\partial x_j} y_i z_j
\]
as in Section \ref{sec:geometry}.

For any prime $p$, define the schemes
\begin{gather*}
Z_{p,\yy} = \Proj \ZZ[x_1,\dotsc,x_n]/(p,F,F^\yy), \\
\tilde Z_{p,\yy} = \Proj \ZZ[x_1,\dotsc,x_n]/(p,F^\yy), \\
Z_{p,\yy,\zz} = \Proj \ZZ[x_1,\dotsc,x_n]/(p,F,F^\yy,F^{\yy,\zz}).
\end{gather*}
Put
\begin{gather*}
s_p(\yy) = \dim \Sing(Z_{p,\yy}), \\
\tilde s_p(\yy) = \dim \Sing (\tilde Z_{p,\yy}), \\
\sigma_p(\yy) = \max \{s_p(\yy),\tilde s_p(\yy)\}, \\
s_p(\yy,\zz) = \dim  \Sing (Z_{p,\yy,\zz}).
\end{gather*}
If $Z_p = \Proj \ZZ[x_1,\dotsc,x_n]/(p,F)$, we have $s_p(\yy) = s_{\yy_p}(Z_p)$, $\tilde s_p(\yy) = \tilde s_{\yy_p}(Z_p)$, $\sigma_p(\yy) = \sigma_{\yy_p}(Z_p)$ (cf. Notation \ref{not:V_y}), where $\yy_p$ denotes the image of $\yy$ under the natural map $\ZZ^n \to (\ZZ/p\ZZ)^n$ .
\end{notation}

\begin{notation}['Differenced' weight functions]
For any $\aa \in \RR^n$, let the infinitely differentiable function $W_\aa$ be given by
\[
W_\aa(\xx) = W(\xx)W(\xx + \aa).
\]
Note that $W_\aa$ vanishes identically if $|\aa| \geq 4$.
Also define, for any pair $(\aa,\aa') \in \RR^n \times \RR^n$, the function
\begin{align*}
W_{\aa,\aa'}(\xx) &= W_\aa(\xx)W_\aa(\xx + \aa') \\
&= W(\xx)W(\xx+\aa)W(\xx+\aa')W(\xx+\aa+\aa').
\end{align*}
\end{notation}

Suppose that we are given three different prime numbers $\pi,p,q$, with $\pi,p\leq B < q/4$, such that $F$ satisfies 
\begin{equation}
\label{eq:R_i()}
\begin{gathered}
(\mathrm R_0(\pi)), \\
(\mathrm R_0(p)), (\mathrm R_1(p)),\\
(\mathrm R_0(q)), (\mathrm R_1(q)) \text{ and }(\mathrm R_2(q))
\end{gathered}
\end{equation}
(as defined in Definition \ref{def:R_i(p)}). We shall later prove the existence of suitable primes $\pi,p,q$.

\begin{lemma}
\label{lem:main}
  
Under the hypotheses above, we have the following results:
\begin{enumerate}

 \item \label{ml1}
 Put
 \[
 K=\pi^{-n}p^{-1}q^{-1}N_W(0,B,\pi pq). 
 \]
 Then
 \begin{equation}
 \label{eq:ml1}
 \begin{aligned}
 N_W(f,B,\pi pq)&=(\pi pq)^{-1}N_W(0,B,\pi pq)+O\left(\pi^{(n-1)/2}\Sigma^{1/2}\right)\\
                &\hphantom{=} +O\left(B^n\pi^{-n/2}p^{-1}q^{-1}\right),\text{ where}
 \end{aligned}
 \end{equation}  
\[ 
\Sigma=\sum_{\uu\in\FF_\pi^n}\left( \sum_{\substack{\xx\equiv\uu\smod{\pi}\\
pq\mid f(\xx)}}
W\left(\frac{1}{B}\xx\right)-K\right)^2.                                                                   \]          
\item \label{ml2}
For any $\yy \in \ZZ^n$, put
\[
 \Delta(\yy)=\sum_{\substack{\xx\in\ZZ^n\\pq\mid f(\xx) \\pq \mid f(\xx+\pi \yy)}} W_{\pi\yy}\left(\frac{1}{B}\xx\right) - p^{-2}q^{-2} \sum_{\xx\in\ZZ^n} W_{\pi\yy}\left(\frac{1}{B}\xx\right).
\]
Then
 \begin{equation}
 \label{eq:ml2}
  \Sigma = \sum_{|\yy|\leq 4B/\pi}\Delta(\yy) + \mathcal{E}_1.
 \end{equation}
 
\item \label{ml3}
Suppose that $\yy\neq 0$. For any $\zz \in \ZZ^n$, put
\[
 \Delta(\yy,\zz):=\sum_{\substack{\xx\in\ZZ^n\\q\mid f(\xx)\\q \mid f^{p\zz}(\xx)\\
q\mid f^{\pi\yy,p\zz}(\xx)}} W_{\pi\yy,p\zz}\left(\frac{1}{B}\xx\right) - q^{-3}\sum_{\xx\in\ZZ^n}W_{\pi\yy,p\zz}\left(\frac{1}{B}\xx\right).
\]
Then 
\begin{equation}
\label{eq:ml3}
\Delta(\yy)=p^{(n-2)/2}\left( \sum_{|\zz|\leq 4B/p} \Delta(\yy,\zz)\right)^{1/2} + \mathcal{E}_2(\yy) + \mathcal{E}_3. 
\end{equation} 

\item \label{ml4}
Furthermore, we have
\begin{equation}
\label{eq:Delta(0)}
\Delta(\0) \ll B^n p^{-1} q^{-1} + \mathcal{E}_0.
\end{equation}
\end{enumerate}

All the implied constants depend only on $n$ and $d$, unless otherwise specified. The error terms $\mathcal E_i$ satisfy the following estimates:
\begin{align*}
\mathcal{E}_0 &\ll B^{(n+1)/2}p^{-1/2}q^{(n-2)/4} + B^{(n+1)/2}p^{(n-2)/2}q^{-1/4} + B^{n}p^{-n/2}q^{-1}. \\
\mathcal{E}_1 &\ll_C B^{(3n+1)/2}\pi^{-n}p^{-3/2}q^{(n-6)/4} + B^{(3n+1)/2}\pi^{-n}p^{(n-4)/2}q^{-5/4} \\ 	&\hphantom{\ll_C} + B^{2n}\pi^{-n}p^{-(n+2)/2}q^{-2} + B^{2n-C}\pi^{-n+C}p^{-2}q^{-2},\\
&\hphantom{\ll_C} \text{for any } C>0. \\
\mathcal{E}_2(\yy) &\ll B^n p^{(n-s_p(\yy))/2} q^{-2}. \\
\mathcal{E}_3 &\ll_C B^{(n+1)/2} p^{-1} q^{(n-6)/4} + B^{n-C} p^{-1+C} q^{-3/2},\quad \text{for any } C>0.
\end{align*}

\end{lemma}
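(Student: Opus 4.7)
\emph{Plan.} I treat the four parts in sequence. Parts (i) and (ii) constitute a first-level Weyl differencing combined with Poisson summation; part (iii) iterates the procedure at the second level and is the main obstacle; part (iv) follows directly from Theorem \ref{thm:marmon}.

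\emph{Parts (i) and (ii).} Split $N_W(f,B,\pi pq)=\sum_{\uu\in\FF_\pi^n}K(\uu)$ with $K(\uu):=\sum_{\xx\equiv\uu\,(\pi),\,pq\mid f(\xx)}W(\xx/B)$. The condition $\pi pq\mid f(\xx)$ forces $\pi\mid f(\uu)$, so only $\uu$ lying on the affine cone $\mathcal{V}_\pi$ over $Z_\pi$ contribute; by $(\mathrm{R}_0(\pi))$ and Proposition \ref{hooley-deligne}, $\#\mathcal{V}_\pi=\pi^{n-1}+O(\pi^{(n-1)/2})$. Writing
\[
N_W(f,B,\pi pq)-(\pi pq)^{-1}N_W(0,B,\pi pq)=\sum_{\uu\in\mathcal{V}_\pi}(K(\uu)-K)+O\bigl(\pi^{(n-1)/2}|K|\bigr)
\]
and applying Cauchy--Schwarz restricted to $\mathcal{V}_\pi$ yields the bound $\pi^{(n-1)/2}\Sigma^{1/2}$, while the trivial bound $|K|\ll B^n\pi^{-n}p^{-1}q^{-1}$ handles the secondary term, proving (i). For (ii) I expand the square in $\Sigma$, substitute $\xx_2=\xx_1+\pi\yy$ in the quadratic term (which eliminates the $\uu$-constraint and restricts $|\yy|\le 4B/\pi$ by the support of $W$), and complete the square by adding and subtracting $p^{-2}q^{-2}\sum_\xx W_{\pi\yy}(\xx/B)$ for each $\yy$. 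This produces $\sum_\yy\Delta(\yy)$ plus a ``total mass'' $p^{-2}q^{-2}\sum_{\xx,\yy}W_{\pi\yy}(\xx/B)$; Lemma \ref{lem:poisson} with $(\phi,a,k)=(W,\pi,C)$ evaluates the latter as $\pi^nK^2+O(B^{2n-C}\pi^{-n+C}p^{-2}q^{-2})$. The $\pi^nK^2$ piece cancels the constant arising in the expansion of $\Sigma$, while the linear cross term $-2K\sum_\uu K(\uu)$ is estimated via a single-modulus main-term asymptotic for $N_W(f,B,pq)$ (a consequence of Proposition \ref{hooley-deligne} or Theorem \ref{thm:marmon}). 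All residuals collect into $\mathcal{E}_1$.

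\emph{Part (iii).} I repeat the procedure at level $p$. Write $\Delta(\yy)=M(\yy)-p^{n-2}L$ with $M(\yy):=\sum_{\vv}M(\yy,\vv)$, $M(\yy,\vv):=\sum_{\xx\equiv\vv\,(p),\text{ two conds}}W_{\pi\yy}(\xx/B)$, and $L:=p^{-n}q^{-2}\sum_\xx W_{\pi\yy}(\xx/B)$; only $\vv$ in the affine cone $\mathcal{V}(\yy)$ over $Z_{p,\yy}$ contribute. By Lemma \ref{lem:dimV_y} together with $(\mathrm{R}_1(p))$ and Proposition \ref{hooley-deligne}, $\#\mathcal{V}(\yy)=p^{n-2}+O\bigl(p^{(n+s_p(\yy))/2}\bigr)$. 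After absorbing the $(\#\mathcal{V}(\yy)-p^{n-2})L$ discrepancy, Cauchy--Schwarz gives
\[
|\Delta(\yy)|^2\ll p^{n-2}\sum_\vv (M(\yy,\vv)-L)^2+(\text{lower order}).
\]
Expanding the second moment, substituting $\xx_2=\xx_1+p\zz$ (restricting $|\zz|\le 4B/p$) and exchanging variables produces \emph{four} mod-$q$ congruences on $\xx$, namely $q\mid f(\xx),\ f^{\pi\yy}(\xx),\ f^{p\zz}(\xx)$ and $f^{\pi\yy,p\zz}(\xx)$. Since $W_{\pi\yy,p\zz}\ge 0$, discarding the surplus $q\mid f^{\pi\yy}(\xx)$ only enlarges the sum and leaves precisely the three conditions defining $\Delta(\yy,\zz)$. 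Lemma \ref{lem:poisson} with $(\phi,a)=(W_{\pi\yy},p)$ handles the cross term $-2L\sum_\vv M(\yy,\vv)$ and the diagonal $\#\mathcal{V}(\yy)L^2$, while the $\zz=\0$ piece of the second moment (where the differencing degenerates) is estimated directly via Proposition \ref{hooley-deligne} on $Z_{p,\yy}$, whose singular locus has dimension $s_p(\yy)$; this produces $\mathcal{E}_2(\yy)$. The Poisson error becomes $\mathcal{E}_3$, and taking square roots yields (\ref{eq:ml3}).

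\emph{Part (iv) and main obstacle.} For $\yy=\0$ one has $W_{\pi\yy}=W^2$ and the two congruences collapse to $pq\mid f(\xx)$, so $\Delta(\0)=N_{W^2}(f,B,pq)-(pq)^{-2}N_{W^2}(0,B,pq)$. Theorem \ref{thm:marmon} with $r=1$, $f_1=f$ (applicable thanks to $(\mathrm{R}_0(p))$ and $(\mathrm{R}_0(q))$) yields $N_{W^2}(f,B,pq)=(pq)^{-1}N_{W^2}(0,B,pq)+O(\mathcal{E}_0)$; subtracting the smaller main term gives $\Delta(\0)\ll B^np^{-1}q^{-1}+\mathcal{E}_0$, after selecting the free parameter $C$ in Theorem \ref{thm:marmon} so that its fourth error contribution is absorbed. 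The main obstacle is part (iii): one must simultaneously (a) convert the geometric hypothesis $(\mathrm{R}_1(p))$ into a sharp count of $\#\mathcal{V}(\yy)$ via Proposition \ref{hooley-deligne}; (b) execute Cauchy--Schwarz while carefully tracking the $\#\mathcal{V}(\yy)-p^{n-2}$ discrepancy; (c) justify by positivity the discard of the surplus congruence $q\mid f^{\pi\yy}(\xx)$ to match the definition of $\Delta(\yy,\zz)$; (d) isolate and bound the degenerate $\zz=\0$ diagonal producing $\mathcal{E}_2(\yy)$ through the singularity dimension $s_p(\yy)$; and (e) control the iterated Poisson error yielding $\mathcal{E}_3$.
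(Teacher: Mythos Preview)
Your treatment of parts (i), (ii) and (iv) matches the paper (modulo a minor slip: Proposition~\ref{hooley-deligne} gives $\#\mathcal{V}_\pi=\pi^{n-1}+O(\pi^{n/2})$, not $O(\pi^{(n-1)/2})$, which is why the secondary error in \eqref{eq:ml1} is $B^n\pi^{-n/2}p^{-1}q^{-1}$). The gap is in part (iii).

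\textbf{The cross term is not a Poisson sum.} After Cauchy--Schwarz you have $\sum_{\vv\in\FF_p^n}(M(\yy,\vv)-L)^2$, and expanding gives a quadratic piece, a cross term $-2L\sum_\vv M(\yy,\vv)$, and a constant $p^nL^2$. The cross term equals $-2L\,N_{W_{\pi\yy}}(f,f^{\pi\yy},B,q)$, a weighted count of $\xx$ satisfying two congruences mod $q$; Lemma~\ref{lem:poisson} says nothing about such a quantity. In the paper this is handled by first \emph{completing the sum}: one bounds
\[
\Sigma(\yy)\ \le\ \Sigma'(\yy):=\sum_{\vv\in\FF_p^n}\sum_{a=1}^{q}\Bigl(\sum_{\substack{\xx\equiv\vv\ (p)\\ q\mid f(\xx)\\ f^{\pi\yy}(\xx)\equiv a\ (q)}}W_{\pi\yy}(\xx/B)-K(\yy)\Bigr)^{2},
\]
which is the positivity step. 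Now the cross term of $\Sigma'(\yy)$ is $-2K(\yy)\,N_{W_{\pi\yy}}(f,B,q)$, involving only the single condition $q\mid f$, and the $r=1$ case of the weighted asymptotic (Remark~\ref{rem:Marmon}) gives $N_{W_{\pi\yy}}(f,B,q)=p^nqK(\yy)+O(Bq^{(n-2)/2})$; this error is what produces the first term of $\mathcal{E}_3$. Your alternative---expand first, then drop $q\mid f^{\pi\yy}(\xx)$ by positivity in the quadratic piece only---leaves the cross term with two conditions mod $q$, so estimating it would require the $r=2$ asymptotic and would introduce a $\yy$-dependent error via $s_q(\yy)$, not the $\yy$-independent $\mathcal{E}_3$ asserted in the lemma. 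Moreover, because completing adds a factor $q$ to the constant piece ($p^nqK(\yy)^2$), it is precisely this extra $q$ that makes the main terms cancel against the Poisson mass $q^{-3}\sum_{\zz}\sum_{\xx}W_{\pi\yy,p\zz}(\xx/B)=p^nqK(\yy)^2+O(\cdot)$.

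\textbf{The source of $\mathcal{E}_2(\yy)$.} You attribute $\mathcal{E}_2(\yy)$ to the ``degenerate $\zz=\0$ diagonal'', but that term stays in the sum $\sum_{|\zz|\le4B/p}\Delta(\yy,\zz)$ and is only dealt with later (Lemma~\ref{lem:E4}, the term $\mathcal{E}_{4,3}$). In fact $\mathcal{E}_2(\yy)$ is exactly the discrepancy you already isolated earlier: writing $\Delta(\yy)=S(\yy)+\mathcal{E}_2(\yy)$ with $\mathcal{E}_2(\yy)=K(\yy)\bigl(\#X_\yy(\FF_p)-p^{n-2}\bigr)$ and applying Proposition~\ref{hooley-deligne} to $X_\yy$ over $\FF_p$ (whose projective singular locus has dimension $s_p(\yy)$) gives $\mathcal{E}_2(\yy)\ll B^np^{-(n-s_p(\yy))/2}q^{-2}$. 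Note $s_p(\yy)$ refers to the variety mod $p$; the $\zz=\0$ piece carries conditions mod $q$, so it could not produce an error governed by $s_p(\yy)$.
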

 
\begin{proof}
Starting from the definition of $N_W(f,B,\pi pq)$, we write 
\[
\begin{split}
N_W(f,B,\pi pq)
               &=\sum_{\substack{\uu\in\FF_\pi^n \\ f_\pi(\uu)=0}}\left( \sum_{\substack{\xx\equiv\uu\smod{\pi}\\ pq\mid f(\xx)}}W\left(\frac{1}{B}\xx\right)-K\right) +K \sum_{\substack{\uu\in\FF_\pi^n \\ f_\pi(\uu)=0}}1\\
	       &=S+K\left(\pi^{n-1}+O(\pi^{n/2})\right),                                                      
	       \end{split}
\]
where
\[
S=\sum_{\substack{\uu\in\FF_\pi^n \\ f_\pi(\uu)=0}}\left( \sum_{\substack{\xx\equiv\uu\smod{\pi}\\pq\mid f(\xx)}}W\left(\frac{1}{B}\xx\right)-K\right),
\]
and $f_\pi \in \ZZ/\pi\ZZ[x_1,\dotsc,x_n]$ is the image of $f$ under the natural homomorphism $\ZZ[x_1,\dotsc,x_n] \to \ZZ/\pi\ZZ[x_1,\dotsc,x_n]$.
Here we have used the property $(\mathrm R_0(\pi))$, applying Proposition \ref{hooley-deligne} to the hypersurface defined by $f_\pi(\uu)=0$. By Cauchy's inequality,
\[
 S^2\ll \pi^{n-1}\sum_{\uu\in\FF_\pi^n}\left( \sum_{\substack{ \xx\equiv\uu\smod{\pi}\\pq\mid f(\xx)}}W\left(\frac{1}{B}\xx\right)-K\right)^2,
\]
so we have
\[
 N_W(f,B,\pi pq) = \pi^{n-1}K+O\left(\pi^{(n-1)/2}\Sigma^{1/2}\right)+O\left(B^n\pi^{-n/2}p^{-1}q^{-1}\right),
\]
and (\ref{ml1}) is proved. Now,
\[
\begin{split}
 \Sigma&=\sum_{\uu\in\FF_\pi^n}\left( \sum_{\substack{\xx\equiv\uu\smod{\pi}\\
 pq \mid f(\xx)}}W\left(\frac{1}{B}\xx\right)-K\right)^2\\
       &=\sum_{\uu\in\FF_\pi^n}\left( \sum_{\substack{\xx\equiv\uu\smod{\pi}\\
       pq\mid f(\xx)=0}}W\left(\frac{1}{B}\xx\right)\right)^2 - 2K N_W(f,B,pq) + \pi^n K^2.
 \end{split}
\]
Using Theorem \ref{thm:marmon}, along with the properties $(\mathrm R_0(p))$ and $(\mathrm R_0(q))$, we have
\begin{equation}
\label{eq:N_W(f,B,pq)}
N_W(f,B,pq)=\pi^n K +\mathcal{E}_0, 
\end{equation}
where
\begin{multline*}
 \mathcal{E}_0 \ll B^{(n+1)/2}p^{-1/2}q^{(n-2)/4} + B^{(n+1)/2}p^{(n-2)/2}q^{-1/4} + B^{n}p^{-n/2}q^{-1}.
\end{multline*}
(The last error term in Theorem \ref{thm:marmon} becomes negligible upon taking $C \geq n-1$.)
We conclude that
\[
 \Sigma = \sum_{\uu\in\FF_\pi^n}\left( \sum_{\substack{\xx\equiv\uu\smod{\pi}\\
 pq \mid f(\xx)}}W\left(\frac{1}{B}\xx\right)\right)^2 - \pi^n K^2 + K \mathcal{E}_0.
\]
Introducing a new variable $\yy$, we expand the sum of squares as a double sum 
\begin{equation*}
\label{eq:diff1}
 \begin{split}
  \sum_{\uu\in\FF_\pi^n}\left( \sum_{\substack{\xx\equiv\uu\smod{\pi}\\
  pq \mid f(\xx)}}W\left(\frac{1}{B}\xx\right)\right)^2 
  &= \sum_{\substack{\xx\in\ZZ^n\\pq \mid f(\xx)}} W\left(\frac{1}{B}\xx\right) \hspace{-1ex} \sum_{\substack{\yy\in\ZZ^n\\pq\mid f(\xx+\pi\yy)}} \hspace{-1ex} W\left(\frac{1}{B}(\xx+\pi\yy)\right)\\
  &=\sum_{|\yy|\leq 4B/\pi}\sum_{\substack{\xx\in\ZZ^n\\
  pq\mid f(\xx)\\
  pq \mid f(\xx+\pi\yy)}}W_{\pi\yy}\left(\frac{1}{B}\xx\right).
 \end{split}
\end{equation*}
Recalling the definition of $\Delta(\yy)$ above, we have
\[
 \Sigma = \sum_{|\yy|\leq 4B/\pi}\Delta(\yy) + p^{-2}q^{-2} \sum_{\yy\in\ZZ^n}\sum_{\xx\in\ZZ^n} W_{\pi\yy}\left(\frac{1}{B}\xx\right) - \pi^n K^2 + K \mathcal{E}_0.
\]
By Lemma \ref{lem:poisson}, however,
\[
 p^{-2}q^{-2} \sum_{\yy\in\ZZ^n}\sum_{\xx\in\ZZ^n} W_{\pi\yy}\left(\frac{1}{B}\xx\right) - \pi^n K^2 \ll_C B^{2n-C}\pi^{-n+C}p^{-2}q^{-2},
\]
so letting 
\[
\mathcal{E}_1 = K \mathcal{E}_0 + B^{2n-C}\pi^{-n+C}p^{-2}q^{-2},
\]
we have proved (\ref{ml2}).

\begin{rem}
\label{rem:salberger's idea}
$\Delta(\yy)$ measures the deviation of the weighted number of solutions to the two simultaneous congruences $pq\mid f(\xx),\ pq \mid f(\xx+\pi\yy)$ from its expected value. Unlike in the papers by Heath-Brown \cite{Heath-Brown} and the author \cite{Marmon}, we keep both congruence conditions in $\Delta(\yy)$ instead of using just the differenced polynomial $f(\xx+\pi\yy)-f(\xx)$. This is the approach introduced by Salberger \cite{Salberger-Integral_points}.
\end{rem}

By Remark \ref{rem:finite} and the properties $(\mathrm R_0(p))$ and $(\mathrm R_0(q))$, neither of $F^{\yy}_q$ and  $F^{\yy}_p$ is identically zero. This means that $f^{\pi\yy}$ is a polynomial of degree $d-1$ with leading form $\pi F^\yy$, and moreover 
\[
\dim Z_{q,\yy}=\dim Z_{p,\yy}=n-3.
\] 
Let
\[
 X_{\yy}=\Spec \ZZ[x_1,\dotsc,x_n]/(f,f^{\pi\yy}).
\]

Now we write
\begin{equation}
 \label{eq:delta(y)}
 \Delta(\yy)=S(\yy) +\mathcal{E}_2(\yy),
\end{equation}
where we have defined
\[
 S(\yy)=\sum_{\substack{\vv\in\FF_p^n\\f_p(\vv)=f^{\pi\yy}_p(\vv)=0}} \left( \sum_{\substack{\xx\equiv\vv\smod{p}\\q \mid f(\xx)\\q \mid f^{\pi\yy}(\xx)}} W_{\pi\yy}\left(\frac{1}{B}\xx\right) - K(\yy) \right)
\]
and
\begin{align*}
 \mathcal{E}_2(\yy) &= \sum_{\substack{\vv\in\FF_p^n\\f_p(\vv)=f^{\pi\yy}_p(\vv)=0}} K(\yy) - p^{-2}q^{-2}\sum_{\xx\in\ZZ^n} W_{\pi\yy}\left(\frac{1}{B}\xx\right),
\end{align*}
with
\[
 K(\yy):=p^{-n}q^{-2} \sum_{\xx\in\ZZ^n}W_{\pi\yy}\left(\frac{1}{B}\xx\right).
\]
But then
\[
 \mathcal{E}_2(\yy) = K(\yy) \left(\#X_{\yy}(\FF_p) - p^{n-2}\right),
\]
and by Proposition \ref{hooley-deligne} we have $\#X_{\yy}(\FF_p)=p^{n-2}+O\left(p^{(n+s_p(\yy))/2}\right)$, yielding 
\[
 \mathcal{E}_2(\yy) \ll B^n p^{-(n-s_p(\yy))/2} q^{-2}.
\]
Thus we turn now to $S(\yy)$. We again apply Cauchy's inequality, using Proposition \ref{prop:trivial} to estimate the number of $\FF_p$-points on $X_\yy$. Thus we get
\begin{equation}
\label{eq:S(y)}
 S(\yy)^2 \ll p^{n-2} \Sigma(\yy), 
\end{equation}
where
\[
 \Sigma(\yy) = \sum_{\vv\in\FF_p^n} \left( \sum_{\substack{\xx\equiv\vv\smod{p}\\q \mid f(\xx)\\q \mid f^{\pi\yy}(\xx)}} W_{\pi\yy}\left(\frac{1}{B}\xx\right) - K(\yy) \right)^2.
\]
\begin{rem}
\label{rem:second diff}
In this second differencing step, our approach is intermediate between that of Heath-Brown and that of Salberger. Indeed, we shall complete the sum (mod $q$), as in Heath-Brown's original argument \cite{Heath-Brown}, with respect to one of the two polynomials involved. This leads us to consider the closed subscheme defined by the three polynomials $f,f^{p\zz},f^{\pi\yy,p\zz}$ rather than the one defined by the four polynomials $f,f^{\pi\yy},f^{p\zz},f^{\pi\yy,p\zz}$. The reason is that the geometric results of \cite{Salberger-Integral_points} extend more readily in the former case.
\end{rem}
We have
\begin{equation}
\label{eq:Sigma(y)}
 \Sigma(\yy) 
 \leq
 \sum_{\vv\in\FF_p^n} \sum_{a=1}^q \left( \sum_{\substack{\xx\equiv\vv\smod{p}\\q \mid f(\xx)\\f^{\pi\yy}(\xx)\equiv a\smod{q}}} W_{\pi\yy}\left(\frac{1}{B}\xx\right)-K(\yy)\right)^2.
\end{equation}
Denote by $\Sigma'(\yy)$ the right hand side of (\ref{eq:Sigma(y)}). Expanding the square, we get
\begin{multline*}
\Sigma'(\yy) = \sum_{\vv\in\FF_p^n} \sum_{a=1}^q \left( \sum_{\substack{\xx\equiv\vv\smod{p}\\q \mid f(\xx)\\f^{\pi\yy}(\xx)\equiv a\smod{q}}} W_{\pi\yy}\left(\frac{1}{B}\xx\right)\right)^2 \\
-2K(\yy) N_{W_{\pi\yy}}(f,B,q) + p^n q K(\yy)^2.
\end{multline*}
By Remark \ref{rem:Marmon} we have the estimate
\[
 N_{W_{\pi\yy}}(f,B,q)=p^n q K(\yy)+O\left(Bq^{(n-2)/2}\right), 
\]
insertion of which yields
\begin{equation}
\label{eq:Sigma'(y)}
\begin{aligned}
\Sigma'(\yy) &= \sum_{\vv\in\FF_p^n} \sum_{a=1}^q \left( \sum_{\substack{\xx\equiv\vv\smod{p}\\q \mid f(\xx)\\f^{\pi\yy}(\xx)\equiv a\smod{q}}} W_{\pi\yy}\left(\frac{1}{B}\xx\right)\right)^2  - p^n q K(\yy)^2 \\
             &\hphantom{=} + O \left( B^{n+1} p^{-n} q^{(n-6)/2} \right).
\end{aligned}
\end{equation} 
As before, we proceed to expand the sum of squares as a double sum, introducing a third variable $\zz$:
\begin{align*}
\sum_{\vv\in\FF_p^n} \sum_{a=1}^q \left( \sum_{\substack{\xx\equiv\vv\smod{p}\\q \mid f(\xx)\\f^{\pi\yy}(\xx)\equiv a\smod{q}}} W_{\pi\yy}\left(\frac{1}{B}\xx\right)\right)^2 
=
\sum_{|\zz| \leq 4B/p} \sum_{\substack{\xx\in\ZZ^n\\q\mid f(\xx)\\q \mid f^{p\zz}(\xx)\\q\mid f^{\pi\yy,p\zz}(\xx)}} 
W_{\pi\yy,p\zz} \left(\frac 1 B \xx \right),
\end{align*}
and then we compare the sum over $\xx$ to its expected value $\Delta(\yy,\zz)$. Another application of Lemma \ref{lem:poisson} yields that
\[
 q^{-3}\sum_{|\zz|\leq 4B/p}\sum_{\xx\in\ZZ^n} W_{\pi\yy,p\zz}\left(\frac{1}{B}\xx\right) -  p^n q K(\yy)^2 \ll_C B^{2n-C}p^{-n+C}q^{-3},
\]
so it follows, in view of \eqref{eq:delta(y)}, (\ref{eq:S(y)}), (\ref{eq:Sigma(y)}) and (\ref{eq:Sigma'(y)}), that
\begin{equation} 
\Delta(\yy)\ll p^{(n-2)/2}\left( \sum_{|\zz|\leq 4B/p}\Delta(\yy,\zz) \right)^{1/2} + \mathcal{E}_2(\yy)+ \mathcal{E}_3, 
\end{equation}
where 
\[
\mathcal{E}_3 \ll_C B^{(n+1)/2} p^{-1} q^{(n-6)/4} + B^{n-C} p^{-1+C} q^{-3/2},\quad \text{for any } C>0.
\]
This proves (\ref{ml3}). 

Finally, we have
\[
\Delta(\mathbf 0) = N_{W_{\pi\yy}}(f,B,pq) - p^{-2}q^{-2} \sum_{\xx\in\ZZ^n} W_{\pi\yy}\left(\frac{1}{B}\xx\right),
\]
so arguing as in \eqref{eq:N_W(f,B,pq)}, we get (\ref{ml4}). 
\end{proof}

By (\ref{eq:ml1}),(\ref{eq:ml2}) and (\ref{eq:ml3}) we are led to evaluate the quantity
\begin{equation}
\label{eq:E4}
 \mathcal{E}_4 := \pi^{(n-1)/2} p^{(n-2)/4} \left( \sum_{\yy \in \ZZ^n \setminus \{\0\}} \left( \sum_{\zz \in \ZZ^n} |\Delta(\yy,\zz)| \right)^{1/2} \right)^{1/2},
\end{equation}
which will be the strongest competitor to the main term in \eqref{eq:ml1}. We shall derive an estimate for $\cE_4$, subject to additional hypotheses. We maintain the convention that implied constants depend only on $n$ and $d$, unless
otherwise specified.

\begin{lemma}
\label{lem:E4}
In addition to the hypotheses preceding Lemma \ref{lem:main}, suppose that $B \geq q^{1/2}$ and $q \nmid (d-1)$. Then we have the estimate
\begin{equation}
\label{eq:lem:E4}
\begin{split}
\cE_4 &\ll B^{(3n+1)/4} \pi^{-1/2} p^{-1/2} q^{(n-4)/8} + B^{(3n+1)/4} \pi^{(n-5)/4} p^{-1/2} q^{-1/8} \\
&\hphantom{\ll} + B^{(3n+1)/4} \pi^{-1/2} p^{(n-5)/4} q^{-1/8} + B^{3n/4} \pi^{(n-3)/4} p^{-1/4} q^{-1/2} \\
&\hphantom{\ll} + B^{(n+1)/2} \pi^{(n-3)/4} p^{-1/4} q^{(n-4)/8} + B^{3n/4} \pi^{(n-4)/4} q^{-1/2}  \\
&\hphantom{\ll} + B^{(2n+3)/4} \pi^{(n-4)/4} q^{(n-5)/8} +B^{(2n+3)/4} \pi^{(n-4)/4} p^{(n-4)/4} q^{-1/8} \\
&\hphantom{\ll} + B^{3n/4}\pi^{-1/2} p^{(n-2)/4} q^{-1/4} + B^{(2n+1)/4} \pi^{-1/2} p^{(n-2)/4} q^{(n-2)/8}.
\end{split}
\end{equation}
\end{lemma}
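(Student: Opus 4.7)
The plan is to bound the inner sum $I(\yy) := \sum_{\zz \in \ZZ^n} |\Delta(\yy, \zz)|$ for each fixed $\yy \neq \0$ with $|\yy| \leq 4B/\pi$, and then to sum the resulting bound over $\yy$ and insert it into the definition \eqref{eq:E4} of $\cE_4$. The pointwise estimate for $|\Delta(\yy, \zz)|$ is obtained by applying a single-modulus version of Theorem \ref{thm:marmon}, namely \cite[Theorem 3.3]{Marmon} as simplified by Remark \ref{rem:Marmon}, to the three polynomials $f$, $f^{p\zz}$, $f^{\pi\yy,p\zz}$ reduced modulo $q$. Their leading forms are $F$, $pF^\zz$, $\pi p F^{\yy,\zz}$; since $q \nmid \pi p$, their vanishing modulo $q$ cuts out precisely the scheme $Z_{q,\yy,\zz}$ of Notation \ref{not:Diff}, whose singular locus has dimension $s_q(\yy,\zz)$. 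For those $\zz$ where $Z_{q,\yy,\zz}$ fails to have the expected dimension $n-4$ (equivalently $\zz_q \in T_{\deg,\yy_q}(Z_q)$), I would fall back on the trivial bound from Proposition \ref{prop:trivial}; by Lemma \ref{lem:T_s-special}(ii), which applies thanks to $(\mathrm{R}_0(q))$, the assumption $n \geq 5$ and $q \nmid (d-1)$, this locus is at most one-dimensional and so contributes acceptably.

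The heart of the argument is a double stratification. Fix $\yy \neq \0$ and set $\sigma := \sigma_q(\yy)$. Condition $(\mathrm{R}_2(q))$ furnishes $\dim T_{\sigma+s+1, \yy_q}(Z_q) \leq n-2-s$ for each $s \geq -1$. Counting integer $\zz$ with $|\zz| \leq 4B/p$ whose image $\zz_q$ lies on a projective subvariety of $\PP^{n-1}_{\FF_q}$ of dimension $\delta$ produces a lattice-point bound of the form $\ll (B/p)^n q^{\delta+1-n} + (B/p)^{\delta+1}$, where the hypothesis $B \geq q^{1/2}$ controls which term dominates. Multiplying the pointwise estimate for $|\Delta(\yy,\zz)|$ by the size of each stratum and summing over $s$ yields a bound for $I(\yy)$ depending only on $\sigma$. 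Next, stratify the $\yy$-sum by the value of $\sigma_q(\yy)$: condition $(\mathrm{R}_1(q))$ gives $\dim T_\sigma(Z_q) \leq n-2-\sigma$, so an analogous lattice-point count governs the number of $\yy$ with $|\yy| \leq 4B/\pi$ in each stratum. Summing $I(\yy)^{1/2}$ stratum by stratum and inserting the result into \eqref{eq:E4} produces the claimed bound after the factor $\pi^{(n-1)/2} p^{(n-2)/4}$ is taken into account.

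The principal obstacle is bookkeeping. The pointwise estimate for $|\Delta(\yy,\zz)|$ already has several error terms, each of the two stratifications introduces additional parameters, and the ten summands in \eqref{eq:lem:E4} record the worst combination in each case. The delicate point is to verify that the hypothesis $B \geq q^{1/2}$, together with the linear growth of the stratum-dimension bounds in $s$ and $\sigma$ supplied by $(\mathrm{R}_1(q))$ and $(\mathrm{R}_2(q))$, ensures that the generic stratum $\sigma = s = -1$ contributes the dominant terms in every regime, while the contributions from the degenerate locus $T_{\deg,\yy_q}(Z_q)$ and from the higher strata are absorbed into the terms already accounted for. Apart from this tracking of exponents, the argument is a direct combination of the preparatory results from Sections \ref{sec:geometry} and \ref{sec:numbertheory}.
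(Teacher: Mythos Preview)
There is a genuine gap, and it is not merely notational. You correctly identify the leading forms of $f,\,f^{p\zz},\,f^{\pi\yy,p\zz}$ as $F,\,pF^{\zz},\,\pi p F^{\yy,\zz}$, but their common vanishing modulo $q$ defines $V(F,F^{\zz},F^{\yy,\zz})=\Diff_{\yy}(F,F^{\zz})$, which in the paper's notation is $Z_{q,\zz,\yy}$, \emph{not} $Z_{q,\yy,\zz}$. The second generator is $F^{\zz}$, not $F^{\yy}$. Consequently the geometric input points the other way: condition $(\mathrm R_2(q))$ gives, for each fixed $\zz$ with $\sigma_q(\zz)=\sigma$, a bound on $\dim T_{\sigma+s+1,\zz_q}(Z_q)$, i.e.\ on the locus of $\yy$ for which $\dim\Sing Z_{q,\zz,\yy}$ is large. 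It gives no control, for fixed $\yy$, on the set of $\zz$ with $\dim\Sing Z_{q,\zz,\yy}$ large. Your plan to bound $I(\yy)=\sum_{\zz}|\Delta(\yy,\zz)|$ by stratifying $\zz$ via $T_{\cdot,\yy_q}(Z_q)$ therefore invokes the wrong stratification, and no result in Section~\ref{sec:geometry} supplies the one you would need. The same reversal affects your treatment of the degenerate locus: Lemma~\ref{lem:T_s-special}(ii) bounds $\dim T_{\deg,\zz_q}(Z_q)$, a set of $\yy$'s for each $\zz$.

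The paper handles this asymmetry by first applying H\"older (Cauchy--Schwarz) to the $\yy$-sum, obtaining
\[
\sum_{\yy\neq\0}\Bigl(\sum_{\zz}|\Delta(\yy,\zz)|\Bigr)^{1/2}\ll (B/\pi)^{n/2}\Bigl(\sum_{(\yy,\zz)\in\cB}|\Delta(\yy,\zz)|\Bigr)^{1/2},
\]
so that $\cE_4$ is controlled by a single sum over pairs $(\yy,\zz)$. Only after this symmetrisation does one stratify, and then in the natural order: first by $\sigma_q(\zz)$ via $(\mathrm R_1(q))$, and then, for each fixed $\zz$, by $\yy\in T_{s,\zz_q}(Z_q)$ via $(\mathrm R_2(q))$. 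Your outline is thus missing exactly the step that makes the available geometric estimates applicable.
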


\begin{rem*}
If we would remove the hypothesis $B \geq q^{1/2}$, then we would get even more terms in \eqref{eq:lem:E4}. The hypothesis $q \nmid (d-1)$ ensures that Lemma \ref{lem:T_s-special}(ii) is applicable.
\end{rem*}

\begin{proof}
We wish to switch the order of summation in \eqref{eq:E4}. Thus we apply H\"older's inequality \cite[Theorem 11]{Inequalities} to get 
\[
 \sum_{\yy \in \ZZ^n \setminus \{\0\}} \left( \sum_{\zz \in \ZZ^n} |\Delta(\yy,\zz)| \right)^{1/2} 
 \ll 
 \left( \frac{B}{\pi} \right)^{n/2} \left( \sum_{\yy \in \ZZ^n \setminus \{\0\}} \sum_{\zz \in \ZZ^n} |\Delta(\yy,\zz)| \right)^{1/2}.
\]
Here we have used the fact that $\Delta(\yy,\zz)$ vanishes identically for $|\yy| \geq 4B/\pi$.
\eqref{eq:E4} transforms into
\begin{equation}
\label{eq:E4'}
 \mathcal{E}_4 \ll B^{n/4} \pi^{(n-2)/4} p^{(n-2)/4} \left( \sum_{(\yy,\zz) \in \cB} |\Delta(\yy,\zz)| \right)^{1/4},
\end{equation}
where the domain of summation is defined by
\[
\cB = \left\{(\yy,\zz) \in \ZZ^n \times \ZZ^n; |\yy| \leq \frac{4B}{\pi}, |\zz| \leq \frac{4B}{p}, \yy \neq \mathbf 0 \right\}. 
\]
To calculate $\cE_4$, we shall partition $\cB$ into three subsets
\begin{gather*}
\cB_1 = \{(\yy,\zz) \in \cB;\ \dim Z_{q,\yy,\zz} = n-4\}, \\
\cB_2 = \{(\yy,\zz) \in \cB;\ \dim Z_{q,\yy,\zz} > n-4,\ \zz \neq \0\}, \\
\cB_3 = \{(\yy,\zz) \in \cB;\ \zz = \0\},
\end{gather*}
and let
\[
 \mathcal{E}_{4,i} = B^{n/4} \pi^{(n-2)/4} p^{(n-2)/4} \left( \sum_{(\yy,\zz) \in \cB_i} |\Delta(\yy,\zz)| \right)^{1/4}
\]
for $i \in \{1,2,3\}$.

We start with $\cE_{4,1}$. Let us partition $\cB_1$ even further into subsets (some of them possibly empty)
\[
\cB_{1,\sigma,s} = \{(\yy,\zz) \in \cB_1; \sigma_q(\zz) = \sigma, s_q(\yy,\zz) = s\},
\]
where $s \in \{-1,\dotsc,n-4\}$, $\sigma \in \{-1,\dotsc,n-1\}$ and $\sigma_q(\cdot)$ and $s_q(\cdot,\cdot)$ are as defined in Notation \ref{not:Diff}. Using \cite[Thm. 3.3]{Marmon} (combined with Remark \ref{rem:Marmon} as usual) we have the estimate
\begin{equation}
\label{eq:Delta(y,z)}
|\Delta(\yy,\zz)| \ll B^{s+2} q^{(n-5-s)/2}
\end{equation}
for $(\yy,\zz) \in \cB_{1,\sigma,s}$. Recall that $F$ satisfies $(\R_1(q))$ and $(\R_2(q))$. This implies that
\[
\dim T_{\sigma}(Z_q) \leq n-2-\sigma,
\]
for $\sigma \in \{-1,\dotsc,n-1\}$. Furthermore, if $\zz \in \ZZ^n$ satisfies $\sigma_q(\zz) = \sigma$, then 
\[
\dim T_{s,\zz}(Z_q) \leq n-1-s+\sigma
\]
for $s \geq \sigma$. Applying Proposition \ref{prop:trivial}, we get
\begin{equation}
\label{eq:B_{1,sigma,s}}
\# \cB_{1,\sigma,s} \ll \begin{cases}
\left( \dfrac{B}{p}\right)^{n-1-\sigma} \left( \dfrac{B}{\pi}\right)^{n-s+\sigma} &\text{ if } s \geq \sigma,\\
\left( \dfrac{B}{p}\right)^{n-1-\sigma} \left( \dfrac{B}{\pi}\right)^{n}  &\text{ if } s < \sigma.
\end{cases}
\end{equation}
Combining \eqref{eq:Delta(y,z)} and \eqref{eq:B_{1,sigma,s}}, we have
\[
\sum_{(\yy,\zz) \in \cB_{1,\sigma,s}} |\Delta(\yy,\zz)| \ll U_{\sigma,s},
\]
where
\[
U_{\sigma,s}:=
\begin{cases}
B^{2n+1} \pi^{-n+s-\sigma} p^{-n+1+\sigma} q^{(n-5-s)/2} &\text{ if } s \geq \sigma,\\
B^{2n+s-\sigma+1} \pi^{-n} p^{-n+1+\sigma} q^{(n-5-s)/2} &\text{ if } s < \sigma.
\end{cases} 
\]
By carefully examining the relations between the quantities $U_{\sigma,s}$, one sees that
\begin{align*}
\sum_{(\yy,\zz) \in \cB_{1}} |\Delta(\yy,\zz)| &\ll \sum_{\sigma,s} U_{\sigma,s} \ll U_{-1,-1} + U_{-1,n-4} + U_{n-4,n-4} \\
&= B^{2n+1} \pi^{-n} p^{-n} q^{(n-4)/2} + B^{2n+1} \pi^{-3}p^{-n}q^{-1/2} \\
&\hphantom{=} + B^{2n+1} \pi^{-n} p^{-3} q^{-1/2}.
\end{align*}
We conclude that
\begin{equation}
\label{eq:E_4,1}
\begin{split}
\cE_{4,1} &\ll B^{(3n+1)/4} \pi^{-1/2} p^{-1/2} q^{(n-4)/8} + B^{(3n+1)/4} \pi^{(n-5)/4} p^{-1/2} q^{-1/8} \\
&\hphantom{=} + B^{(3n+1)/4} \pi^{-1/2} p^{(n-5)/4} q^{-1/8}. 
\end{split}
\end{equation}

Next, we turn our attention to $\cE_{4,2}$. For $(\yy,\zz) \in \cB_2$, we cannot apply the case $r=3$ of Remark \ref{rem:Marmon} to estimate $|\Delta(\yy,\zz)|$ as in \eqref{eq:Delta(y,z)}, since $\dim Z_{q,\yy,\zz} > n-4$. However, since we assume that $F$ satisfies the property $\R_0(q)$, we do know, by Lemma \ref{lem:dimV_y}, that $\dim Z_{q,\zz} = n-3$. Thus, it follows from the case $r=2$ of Remark \ref{rem:Marmon} that
\begin{equation}
\label{eq:Delta(y,z)_2}
\begin{split}
|\Delta(\yy,\zz)| &\leq N_{W_{\pi\yy,p\zz}}(f,f^{p\zz},B,q) + O(B^n q^{-3}) \\
&= q^{-2}N_{W_{\pi\yy,p\zz}}(0,B,q) + O\left(B^{\sigma_q(\zz)+2} q^{(n-4-\sigma_q(\zz))/2}\right) \\
&\hphantom{=} + O(B^n q^{-3})\\
&\ll B^n q^{-2} +B^{\sigma_q(\zz)+2} q^{(n-4-\sigma_q(\zz))/2}.
\end{split}
\end{equation}
Here we have used the fact that $\sigma_q(\zz) \geq s_q(\zz)$, so that $(Bq^{-1/2})^{\sigma_q(\zz)} \geq (Bq^{-1/2})^{s_q(\zz)}$.

We shall partition $\cB_2$ into subsets
\[
\cB_{2,\sigma} = \{(\yy,\zz) \in \cB_2;\ \sigma_q(\zz) = \sigma\}
\]
where $\sigma \in \{-1,\dotsc,n-1\}$. Note that $\cB_{2,n-2} = \cB_{2,n-1} = \varnothing$ since $\dim Z_{q,\zz} = n-3$, and $\cB_{2,-1} = \varnothing$ by Lemma \ref{lem:T_s-special}(ii) (recall that $\yy \neq \0 \neq \zz$ if $(\yy,\zz) \in \cB_2$). Furthermore, Lemma \ref{lem:T_s-special}(ii), property $(\R_1(q))$ and Proposition \ref{prop:trivial} imply that
\[
\# \cB_{2,0} \ll \left( \dfrac{B}{p}\right)^{n-1} \left( \dfrac{B}{\pi}\right)
\]
and
\[
\# \cB_{2,\sigma} \ll \left( \dfrac{B}{p}\right)^{n-1-\sigma} \left( \dfrac{B}{\pi}\right)^2
\]
for $\sigma \in \{1,\dotsc,n-3\}$. Inserting \eqref{eq:Delta(y,z)_2}, we get
\[
\sum_{(\yy,\zz) \in \cB_{2,\sigma}} \ll V_{\sigma,1} +V_{\sigma,2}
\]
for $\sigma \in \{0,\dotsc,n-3\}$, where
\begin{align*}
V_{\sigma,1} &= 
\begin{cases}
B^{2n} \pi^{-1}p^{-n+1}q^{-2} &\text{if } \sigma=0,\\
B^{2n+1-\sigma} \pi^{-2} p^{-n+1+\sigma}q^{-2} &\text{if } 1 \leq \sigma \leq n-3,
\end{cases}
\\
V_{\sigma,2} &=
\begin{cases}
B^{n+2} \pi^{-1}p^{-n+1}q^{(n-4)/2} &\text{if } \sigma=0,\\
B^{n+3} \pi^{-2} p^{-n+1+\sigma}q^{(n-4-\sigma)/2} &\text{if } 1 \leq \sigma \leq n-3.
\end{cases}
\end{align*}
As in the calculation of $\cE_{4,1}$, some of the $V_{\sigma,i}$ can be neglected when estimating $\cE_{4,2}$. We have
\begin{align*}
\sum_{(\yy,\zz) \in \cB_2} |\Delta(\yy,\zz)| &\ll V_{0,1} + V_{0,2} + V_{1,1} + V_{1,2} + V_{n-3,2},
\end{align*}
which implies that
\begin{equation}
\label{eq:E_4,2}
\begin{split}
\cE_{4,2} &\ll B^{3n/4} \pi^{(n-3)/4} p^{-1/4} q^{-1/2} + B^{(n+1)/2} \pi^{(n-3)/4} p^{-1/4} q^{(n-4)/8} \\
&\hphantom{\ll} +B^{3n/4} \pi^{(n-4)/4} q^{-1/2} + B^{(2n+3)/4} \pi^{(n-4)/4} q^{(n-5)/8} \\
&\hphantom{\ll} +B^{(2n+3)/4} \pi^{(n-4)/4} p^{(n-4)/4} q^{-1/8}. 
\end{split}
\end{equation}

Finally, we calculate $\cE_{4,3}$. For $(\yy,\0) \in \cB_3$, we estimate $|\Delta(\yy,\0)|$ using the case $r=1$ of Remark \ref{rem:Marmon}:
\begin{equation*}
\begin{split}
|\Delta(\yy,\zz)| &\leq N_{W_{\pi\yy,p\zz}}(f,B,q) + O(B^n q^{-3}) \\
&\ll B^n q^{-1} + B q^{(n-2)/2}.
\end{split}
\end{equation*}
Since $\# \cB_3 \ll (B/\pi)^n$, we get
\begin{equation}
\label{eq:E_4,3}
\cE_{4,3} \ll B^{3n/4}\pi^{-1/2} p^{(n-2)/4} q^{-1/4} + B^{(2n+1)/4} \pi^{-1/2} p^{(n-2)/4} q^{(n-2)/8}.
\end{equation}
Putting together the contributions from \eqref{eq:E_4,1}, \eqref{eq:E_4,2} and \eqref{eq:E_4,3}, we arrive at the estimate \eqref{eq:lem:E4}.
\end{proof}

\section{Proof of the main theorems}
\label{sec:proof}

Let $f$ be a polynomial in $\ZZ[x_1,\dotsc,x_n]$ of degree $d\geq4$ with leading form $F$, let  $Z=\Proj\ZZ[x_1,\dotsc,x_n]/(F)$, and suppose that $Z_\QQ$ is a non-singular subscheme of $\PP^{n-1}_\QQ$.
Note that Lemma \ref{lem:main} (\ref{ml1}) gives an asymptotic formula for $N_W(f,B,\pi pq)$. However, we shall only use it as an upper bound, and try to deduce a good upper bound for $N(f,B)$ by choosing $\pi$, $p$ and $q$ wisely in terms of $B$. It turns out that the following relations are desirable:
\begin{equation}
\label{eq:pipq-relations}
\begin{gathered}
\pi \asymp B^{(n^2-n-2)/(n^2+8n-4)},\\
p \asymp B^{(n^2-2n+8)/(n^2+8n-4)}, \\
q \asymp B^{2(n^2-n-2)/(n^2+8n-4)}, 
\end{gathered}
\end{equation}
since then the first, second and ninth terms in \eqref{eq:lem:E4} will be of the same order of magnitude as the main term in \eqref{eq:ml1}, and all other terms involved will be smaller. To be able to use Lemma \ref{lem:main} we need to have $q \gg B$, which is consistent with \eqref{eq:pipq-relations} as soon as $n \geq 10$. However, in case $n < 10$, the estimate in Theorem \ref{thm:main} follows already from \cite[Thm. 2]{Heath-Brown}. More importantly, the results of Lemma \ref{lem:main} are subject to a set of hypotheses \eqref{eq:R_i()} on $\pi, p, q$. We need to show that such $\pi, p, q$ exist in the specified intervals \eqref{eq:pipq-relations}. 

By Corollary \ref{cor:veryverygood}, however, the set of primes not fulfilling these criteria is finite. Thus,  Bertrand's postulate \cite[Theorem 418]{Hardy-Wright} assures that the intervals specified in \eqref{eq:pipq-relations}, with implied constants depending on $F$, contain primes satisfying \eqref{eq:R_i()}. We are thus allowed to insert \eqref{eq:pipq-relations} into Lemmata \ref{lem:main} and \ref{lem:E4}. Then we have, for the main term in \eqref{eq:ml1},
\[
 (\pi pq)^{-1} N_W(0,B,\pi pq) \ll B^n (\pi pq)^{-1} \ll_F B^{n-4+(37n-18)/(n^2+8n-4)}.
\]
The same holds for the 'main' auxiliary term - by Lemma \ref{lem:E4} we have
\[
\cE_4 \ll_F B^{n-4+(37n-18)/(n^2+8n-4)},
\]
where, as mentioned above, the first, second and ninth terms in \eqref{eq:lem:E4} dominate the expression. Thus, to finish the proof of Theorem \ref{thm:main} it remains to check that the remaining error terms occurring in Lemma \ref{lem:main} are small enough. We shall omit these calculations.

The key argument in the proof of Theorem \ref{thm:mainuniform} is the following lemma, which is a version of a result by Heath-Brown (see \cite[Thm. 4]{Heath-Brown02} and \cite[Lemma 5]{BHS}).

\begin{lemma}
\label{lem:Siegel}
Let $f \in \ZZ[x_1,\dotsc,x_n]$ be a polynomial of degree $d$. Let
\[
S(f,B) = \{\xx \in \ZZ^n; f(\xx)=0, |\xx| \leq B\}.
\]
Then one of the following holds:
\begin{enumerate}
\item
\label{enum:coefficients bounded}
There is a constant $\theta$, depending only on $n$ and $d$, such that $\Vert f \Vert \ll_{n,d} B^\theta$.
\item
\label{enum:auxpoly}
There exists a polynomial $g \in \ZZ[x_1,\dotsc,x_n]$ of degree $d$, not equal to $\lambda f$ for any $\lambda \in \QQ$, such that $g(\xx) = 0$ for every $\xx \in S(f,B)$.
\end{enumerate}
\end{lemma}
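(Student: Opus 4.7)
The plan is a standard linear-algebraic dichotomy based on the rank of the evaluation matrix on $S(f,B)$. I may reduce at the outset to the case where the coefficients of $f$ have no common factor, since $S(f,B)$ and the condition of being a $\QQ$-multiple of $f$ are invariant under rescaling $f$ by a nonzero rational. Set $N=\binom{n+d}{d}$, the number of monomials of degree at most $d$, and let $M\in\ZZ^{S(f,B)\times N}$ be the matrix with $(\xx,I)$-entry $\xx^I$, where $I$ ranges over multi-indices with $|I|\leq d$. The coefficient vector $\mathbf f$ of $f$ lies in $\ker M$, so $\mathrm{rk}(M)\leq N-1$, and I split on whether the rank is strictly less than $N-1$ or exactly $N-1$.

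If $\mathrm{rk}(M)\leq N-2$, the $\QQ$-kernel of $M$ has dimension at least two, so I can find an integer vector $\mathbf h\in\ker M$ that is $\QQ$-independent of $\mathbf f$. The polynomial $g_0$ associated to $\mathbf h$ vanishes on $S(f,B)$ and has $\deg g_0\leq d$. If $\deg g_0<d$, I replace $g_0$ by $g=g_0+f$, which still vanishes on $S(f,B)$, has degree exactly $d$ (the leading form of $f$ survives), and is not a $\QQ$-multiple of $f$---otherwise $g_0=(\lambda-1)f$ would contradict the $\QQ$-independence of $\mathbf h$ and $\mathbf f$. This produces conclusion~\ref{enum:auxpoly}.

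Otherwise $\mathrm{rk}(M)=N-1$. I pick $N-1$ linearly independent rows to form $M'\in\ZZ^{(N-1)\times N}$, and write $M'_i$ for the $(N-1)\times(N-1)$ block obtained by deleting the $i$-th column. By the cofactor identity, $\mathbf v=((-1)^i\det M'_i)_i$ is a nonzero integer vector spanning the one-dimensional $\QQ$-kernel of $M'$; since $\mathbf f$ also lies in this line, $\mathbf v=c\mathbf f$ for some $c\in\QQ^\times$, and the integrality of $\mathbf v$ combined with the primitivity of $\mathbf f$ forces $c\in\ZZ\setminus\{0\}$. Consequently $|f_I|\leq|\det M'_i|$ for every coefficient of $f$, and since each entry of $M'_i$ has modulus at most $B^d$, Hadamard's inequality gives $|\det M'_i|\ll_{n,d}B^{d(N-1)}$. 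This yields conclusion~\ref{enum:coefficients bounded} with $\theta=d(N-1)$. The only mildly subtle step in the argument is enforcing $\deg g=d$ in the first case, handled by the shift $g_0\mapsto g_0+f$; everything else is routine linear algebra together with a Hadamard determinant bound.
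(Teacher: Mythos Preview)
Your argument is correct and self-contained; the paper, by contrast, simply cites an external lemma (Lemma~5 of Browning--Heath-Brown--Salberger) applied to the homogenization of $f$, and your rank dichotomy on the evaluation matrix followed by a cofactor/Hadamard bound is precisely the standard Siegel-type argument that underlies such results. So the approaches coincide in substance; you have just unpacked the citation.

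One small point worth flagging: your reduction to primitive $f$ is not justified by the reasons you give, because condition~(i) is \emph{not} invariant under rescaling $f$ by a rational. In fact the lemma as stated fails for non-primitive $f$: take $n\geq 2$, $B\geq 1$, and $f=Mx_1$ with $M$ large. Any degree-$1$ polynomial vanishing on $S(f,B)$ must be a rational multiple of $x_1$, so alternative~(ii) fails, while $\Vert f\Vert=M$ rules out alternative~(i). Thus primitivity should be regarded as an implicit hypothesis on the statement rather than a WLOG reduction. This is harmless for the applications in the paper, since one may always replace $f$ by its primitive part without changing $S(f,B)$ or $N(f,B)$; once that hypothesis is in place, your argument goes through exactly as written.
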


\begin{proof}
The result follows upon applying \cite[Lemma 5]{BHS} to the homogenization $F_0 \in \ZZ[x_0,\dotsc,x_n]$ of $f$. 
\end{proof}

Now we are able to prove Theorem \ref{thm:mainuniform}. Suppose that \eqref{enum:coefficients bounded} holds in Lemma \ref{lem:Siegel}. Then by Corollary \ref{cor:veryverygood} we have
\[
\prod_{p \in \cP(F)} p \ll_{n,d} B^{\theta\kappa}.
\]
Thus it is possible, using Bertrand's postulate, to find primes $\pi$, $p$ and $q$ satisfying \eqref{eq:R_i()} and \eqref{eq:pipq-relations}, with the implied constants in \eqref{eq:pipq-relations} depending only on $n$ and $d$. Now we proceed exactly as in the proof of Theorem \ref{thm:main}, except that all implied constants now depend only on $n$ and $d$. We get the following result.

\begin{thm}[Theorem \ref{thm:main}$'$]
\label{thm:main'}
Under the hypotheses of Theorem \ref{thm:main}, suppose furthermore that there is a constant $\theta \ll_{n,d} 1$ such that $\Vert f \Vert \ll_{n,d} B^\theta$. Then we have the estimate
\[
N(f,B)\ll_{n,d} B^{n-4+(37n-18)/(n^2+8n-4)}.
\]
\end{thm}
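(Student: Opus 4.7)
The plan is to rerun the proof of Theorem \ref{thm:main} while tracking exactly which implicit constants depend on $F$, and then use the hypothesis $\Vert f \Vert \ll_{n,d} B^\theta$ to eliminate those dependencies. Inspecting that proof, one sees that the auxiliary estimates used---Theorem \ref{thm:marmon}, Proposition \ref{prop:trivial}, Proposition \ref{hooley-deligne}, Lemma \ref{lem:poisson}, and Lemmata \ref{lem:main} and \ref{lem:E4}---all carry implicit constants that depend only on the fixed weight function $W$ of \eqref{eq:W} and on $n$ and $d$. The sole place where dependence on $F$ enters is the invocation of Bertrand's postulate to pick primes $\pi, p, q$ inside the intervals prescribed by \eqref{eq:pipq-relations} subject to the hypotheses \eqref{eq:R_i()}.

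The failure set for \eqref{eq:R_i()} is precisely the finite exceptional set $\cP(F)$ supplied by Corollary \ref{cor:veryverygood}. Under the hypothesis $\Vert f \Vert \ll_{n,d} B^\theta$, that corollary yields
\[
\prod_{p \in \cP(F)} p \ll_{n,d} \Vert F \Vert^\kappa \ll_{n,d} B^{\theta\kappa},
\]
and in particular $|\cP(F)| \ll_{n,d} \log B$. Each of the three intervals in \eqref{eq:pipq-relations} has the shape $[X, CX]$ with $X$ a positive power of $B$, and iterating Bertrand's postulate inside such an interval supplies at least $\lfloor \log_2 C \rfloor$ distinct primes. Taking $C$ large enough in terms of $n$ and $d$ overcomes the $O_{n,d}(\log B)$ obstruction posed by $\cP(F)$ and produces admissible primes $\pi, p, q$ with implied constants depending only on $n$ and $d$.

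With such a choice fixed, the remainder of the argument---the evaluation of the main and error terms in Lemmata \ref{lem:main} and \ref{lem:E4}, and the trivial inequality $N(f, B) \leq N_W(f, B, \pi p q)$---is formally identical to the proof of Theorem \ref{thm:main}, except that every implicit constant now depends only on $n$ and $d$. The conclusion is the stated bound. I foresee no serious obstacle: the entire content of the theorem is bookkeeping of constants, and Corollary \ref{cor:veryverygood} has already done the heavy lifting of converting size control on $f$ into a usable bound on the exceptional primes.
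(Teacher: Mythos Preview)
Your approach is the paper's own: track where the $F$-dependence enters the proof of Theorem~\ref{thm:main} (only in the choice of $\pi,p,q$), use Corollary~\ref{cor:veryverygood} to bound the exceptional primes, and then rerun the argument with constants depending only on $n,d$.

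There is one slip in the prime-finding step. You correctly record $\prod_{p \in \cP(F)} p \ll_{n,d} B^{\theta\kappa}$, but then pass to $|\cP(F)| \ll_{n,d} \log B$ and claim that iterated Bertrand in $[X,CX]$ gives $\lfloor \log_2 C\rfloor$ primes, enough to beat this $\log B$ obstruction by taking $C$ large in terms of $n,d$ only. That does not follow: a constant $C$ yields only $O_{n,d}(1)$ primes, which cannot a priori dodge $\gg \log B$ bad ones. The correct count uses the product bound directly: each interval has the form $[c_1 B^{\alpha}, c_2 B^{\alpha}]$ with $\alpha>0$ depending only on $n$, so if $k$ exceptional primes lie in it then $(c_1 B^{\alpha})^{k} \le \prod_{p\in\cP(F)} p \ll_{n,d} B^{\theta\kappa}$, forcing $k \le \theta\kappa/\alpha + O_{n,d}(1) = O_{n,d}(1)$. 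Now a fixed $C=C(n,d)$ suffices, exactly as you intended. With this repaired, the rest of your argument is complete and matches the paper.
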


In case \eqref{enum:auxpoly} one easily sees that $N(f,B) \ll_{n,d} B^{n-2}$. To improve this to $B^{n-3+\epsi}$ requires some work, to which we devote Section \ref{sec:dimconj}. The estimate given by Theorem \ref{thm:dimgrowth} is enough to deduce Theorem \ref{thm:mainuniform}.

%
%

\section{Integral points on certain affine varieties}
\label{sec:dimconj}

The aim of this section is to prove the following result.

\begin{thm}
\label{thm:dimgrowth}
Let $n \geq 4$. Let $f \in \ZZ[x_1,\dotsc,x_n]$ be a polynomial of degree $d \geq 4$, whose leading form $F$ defines a non-singular hypersurface in $\PP^{n-1}_\QQ$. Let $g \in \ZZ[x_1,\dotsc,x_n]$ be another polynomial, not divisible by $f$. Then we have the estimate
\[
N(f,g,B) \ll_{n,d,\epsi} 
\begin{cases}
B^{n-3+1/12 + \epsi}, & 4 \leq n \leq 11,\\
B^{n-3+\epsi}, & n \geq 12.
\end{cases}
\]
\end{thm}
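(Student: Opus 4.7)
The statement is a variant of the dimension growth conjecture for affine varieties: since $f$ is geometrically irreducible (because its leading form $F$ is non-singular and $n\geq 4$) and $g\notin(f)$, the variety $V(f,g)\subset\AA^n$ is a codimension-two complete intersection of dimension $n-2$. Hence the naive dimension-theoretic bound from Lang--Weil only yields $N(f,g,B)\ll B^{n-2+\epsi}$, so the theorem asks us to save one power of $B$ beyond this. My plan is to use the $p$-adic determinant method of Heath-Brown and Salberger, adapted to the affine setting and exploiting the non-singularity of $F$ to control reductions modulo primes.

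First I would reduce to the case $\dim V(f,g)=n-2$: if $\dim V(f,g)\le n-3$, Lang--Weil applied to each component of bounded degree (the ideal generators have degree at most $\max(d,\deg g)$) immediately yields $N(f,g,B)\ll_{n,d,\epsi} B^{n-3+\epsi}$. Next, after possibly replacing $g$ by $g-\lambda f$ for suitable $\lambda\in\QQ$, I would arrange that either $\deg g<d$ or $\deg g=d$ with leading form $G$ not proportional to $F$; this ensures that $V(F,G)\subset\PP^{n-1}_\QQ$ is a proper intersection of dimension $n-3$, which will be needed when counting rational points at infinity.

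The heart of the argument is the construction of an auxiliary polynomial $h\in\ZZ[x_1,\dotsc,x_n]$ of bounded degree $D$, not lying in the ideal $(f,g)$, such that every integer solution of $f=g=0$ in $[-B,B]^n$ satisfies $h=0$. For each prime $p$ in an appropriate range $p\asymp B^{1/(n-2)}$ (chosen so that $F\bmod p$ is non-singular and so that Lang--Weil yields $\#V(f,g)(\FF_p)=p^{n-2}+O(p^{n-2-1/2})$), the integer points reducing to a given residue class lie in a box of side $\asymp B/p$. A Bombieri--Pila-style determinant argument on the matrix of monomial evaluations then forces these points to satisfy a common auxiliary equation. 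Once $h$ is produced, $V(f,g,h)$ has dimension at most $n-3$, and Lang--Weil applied to this new variety of uniformly bounded degree gives $N(f,g,B)\le N(V(f,g,h),B)\ll B^{n-3+\epsi}$.

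The main obstacle is the trade-off between the degree $D$ of $h$ and the parameter $p$: the determinant method requires enough integer points per residue class to force the determinant to vanish, which in turn bounds $D$. For $n\geq 12$ there is enough room in $\PP^n$ that $D$ can be chosen to depend only on $n$ and $d$, giving the clean exponent $n-3+\epsi$. For $4\leq n\leq 11$, the smaller ambient dimension forces $D$ to grow slowly with $B$, and optimising the resulting expression yields the extra $B^{1/12}$ loss recorded in the statement. Throughout, the non-singularity of $F$ is essential to ensure that the reductions $V(f,g)\bmod p$ have the expected size for almost all $p$, which is what allows the determinant matrix to be controlled uniformly.
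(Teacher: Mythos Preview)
Your proposal has a genuine gap: the determinant method, applied directly to the $(n-2)$-dimensional variety $V(f,g)$, does not produce a \emph{single} auxiliary polynomial $h$ of degree $O_{n,d}(1)$ vanishing on all integral points of height at most $B$. It produces one auxiliary hypersurface per $\FF_p$-residue class, hence roughly $p^{n-2}$ of them, and combining these into a codimension-three locus of bounded degree is precisely the hard part of the dimension growth problem---it is not a free step. Your assertion that ``for $n\geq 12$ there is enough room'' is not backed by any mechanism in the sketch, and if the degree of $h$ is allowed to grow with $B$ then the implied constant in your final Lang--Weil step grows too, so that step fails. Nor is the exponent $1/12$ anywhere derived from your optimisation; it does not arise from a determinant trade-off in dimension $n-2$.

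The paper's proof rests on two ingredients absent from your sketch. First, a Noether--Lefschetz argument: since $V(F)\subset\PP^{n-1}$ is non-singular of degree $d\geq 4$, every divisor on it is cut out by a hypersurface, so each irreducible component $Y$ of $V(f,g)$ has $d\mid\deg Y$ and its part at infinity has no component of degree $<d$. A birational projection of bounded height (Proposition~\ref{prop:projection}) followed by repeated hyperplane slicing reduces to geometrically irreducible surfaces $S\subset\AA^3$ of degree $d'$ whose curve at infinity still has no component of degree $<d$. If $d'\geq 2d\geq 8$, Theorem~\ref{thm:BHS2} gives $N(S,B)\ll B^{1+\epsi}$; if $d'=d$, the same theorem gives $N(S,B)\ll B^{3/(2\sqrt d)+1/3+\epsi}\leq B^{1+1/12+\epsi}$ (the worst case being $d=4$), which is the source of the $1/12$. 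Second, in the case $d'=d$ one shows $Y$ is actually a hyperplane section $V(f)\cap L$, and for $n\geq 12$ the paper applies its own main exponential-sum bound (Theorem~\ref{thm:main'}) to one-parameter families of slices of this section; the threshold $12$ is exactly where that bound becomes $\leq B^{n-3}$. Neither the degree-divisibility input nor the recursive use of Theorem~\ref{thm:main'} is visible in your approach.
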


First we make some remarks on notation. Unless otherwise stated, we work over $\QQ$, that is, $\AA^n = \AA^n_\QQ$ and $\PP^n = \PP^n_\QQ$. We shall use the notation $V(\gamma_1,\dotsc,\gamma_r)$ for the closed subset of $\AA^n$ defined by $\gamma_1 = \cdots = \gamma_r = 0$, endowed with its reduced scheme structure. We denote by $H_0 \subset \PP^n$ the hyperplane defined by $x_0 = 0$. If $U \subseteq \AA^n$ is a locally closed subset, then we define
\[
U(\ZZ,B) = U(\QQ) \cap \ZZ^n \cap [-B,B]^n
\]
for any positive real number $B$, and $N(U,B) = \#U(\ZZ,B)$.

In proving Theorem \ref{thm:dimgrowth} we shall use results by Browning, Heath-Brown and Salberger \cite{BHS}. However, we shall need a slightly more general version of \cite[Thm. 2]{BHS}, which was shown to us by Salberger. 

\begin{thm}
\label{thm:BHS2}
Let $f \in \bar\QQ[x_1,x_2,x_3]$ be an irreducible polynomial of degree $d \geq 3$. Suppose that the leading form $F$ of $f$ has no irreducible factors of degree $1$ or $2$. Then
\[
N(f,B) \ll_{d,\epsi} 
\begin{cases}
B^{5/(3\sqrt{3}) + 1/4 + \epsi}, & d=3,\\
B^{3/(2\sqrt{d}) + 1/3 + \epsi}, & d=4 \text{ or } 5,\\
B^{1+\epsi}, & d \geq 6.
\end{cases}
\]
\end{thm}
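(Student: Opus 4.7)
The plan is to deduce Theorem \ref{thm:BHS2} from [BHS, Thm. 2] by homogenization. Let $f_0 \in \bar\QQ[x_0,x_1,x_2,x_3]$ be the homogenization of $f$, so $X := V(f_0) \subset \PP^3_{\bar\QQ}$ is a geometrically integral projective surface of degree $d$. Any integer point $\xx \in \ZZ^3$ with $f(\xx) = 0$ and $|\xx| \leq B$ lifts to the rational point $[1:x_1:x_2:x_3] \in X(\QQ)$ of projective height $\leq B$, so $N(f,B) \leq N_X(B)$. The published [BHS, Thm. 2] gives exactly these exponents for $N_X(B)$ under hypotheses that forbid certain low-degree substructures in $X$; the hypothesis here---that the leading form $F$ has no irreducible factors of degree $1$ or $2$---is the affine analogue, forcing every component of the boundary $X \cap V(x_0) = V(F) \subset \PP^2$ to have degree $\geq 3$ and thereby ruling out planes, quadric sections and similar pathologies swept in from infinity.

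Following the blueprint of [BHS], I would proceed as follows: (i) peel off the boundary stratum $X \cap V(x_0) = V(F)$ and bound integer points on each of its components separately by Walsh's curve estimate, yielding $O(B^{2/3+\epsi})$ per component since every component has degree $\geq 3$; (ii) on the affine complement, apply Salberger's $p$-adic affine determinant method for a suitable prime $p \asymp B^\alpha$, producing for each residue class modulo $p$ an auxiliary polynomial $G$ of degree $O_d(1)$ such that the residue class lies on $X \cap V(G)$; (iii) apply a curve bound (Walsh, Bombieri--Pila) on each curve $X \cap V(G)$ to count integer points of height $\leq B$; (iv) sum over residue classes and optimize $\alpha$. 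For $d=3$ one additionally invokes Heath-Brown's refined determinant method for cubic surfaces, producing the exponent $5/(3\sqrt{3})$; for $d = 4,5$ the exponents $3/(2\sqrt{d}) + 1/3$ emerge from the generic optimization; for $d \geq 6$ the curve bound $O(B^{2/d+\epsi})$ already suffices to give the exponent $1$ after summation.

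The main obstacle is verifying, in the more general setting of Theorem \ref{thm:BHS2}, that step (ii) still produces a \emph{proper} intersection $X \cap V(G)$---that is, that $G$ does not contain $f_0$ as a factor, in which case the auxiliary ``curve'' would degenerate. This uses the absolute irreducibility of $f$ (so that $X$ is integral) together with the fact that $F$ has no factors of degree $\leq 2$, so that no low-degree auxiliary $G$ can arise trivially from the boundary. A secondary technicality is that $f$ has coefficients in $\bar\QQ$ rather than $\QQ$: the determinant method is purely algebraic and extends verbatim to any number field $K \subset \bar\QQ$ containing the coefficients of $f$, while the naive heights $|\xx|$ of the integer solutions are insensitive to the field of definition; one must only be careful in the book-keeping of implied constants, which are allowed to depend on $d$ (and hence on the degree $[K:\QQ] \ll_d 1$, since the minimal field of definition of an irreducible polynomial of degree $d$ is of bounded degree over $\QQ$).
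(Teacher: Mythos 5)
Your proposal has two genuine gaps, one at the level of strategy and one at the level of identifying where the hypothesis on $F$ actually enters.

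First, the opening claim that Theorem \ref{thm:BHS2} can be ``deduced from [BHS, Thm.\ 2] by homogenization'' cannot work as stated: the present theorem has a strictly weaker hypothesis than [BHS, Thm.\ 2] (there $F$ itself is assumed irreducible; here $F$ may be reducible so long as no irreducible factor has degree $1$ or $2$), so no formal reduction to [BHS, Thm.\ 2] as a black box is possible. The whole content of the statement is that [BHS]'s proof still goes through under the weaker hypothesis, which requires re-opening their argument and locating exactly where irreducibility of $F$ is used.

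Second, and more seriously, you have misidentified that location. You propose that the hypothesis on $F$ is needed to ensure the auxiliary polynomial $G$ from the determinant method is not a multiple of $f_0$, so that $X \cap V(G)$ is a proper intersection. But that already follows from the geometric integrality of $X$ alone (i.e.\ from the irreducibility of $f$ over $\bar\QQ$); the determinant method's rank/Wronskian argument never touches the boundary. The actual role of the hypothesis on $F$ in [BHS] is in controlling the contribution of \emph{lines and conics} lying on $X$. The determinant method covers the integer points by $O_{d,\epsi}(B^{2/\sqrt d + \epsi})$ curves of bounded degree on $X$; each curve of degree $\geq 3$ contributes $O(B^{2/3+\epsi})$ points of height $\leq B$, which is fine, but a single line or conic can contain $\gg B$ integer points, so their aggregate contribution must be bounded separately. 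This is exactly what [BHS, Prop.\ 1] does, and it is the only place where irreducibility of $F$ is invoked. The paper's proof consists of replacing [BHS, Prop.\ 1] with Proposition \ref{prop:BHSprop1}, which holds under the weaker hypothesis that every component of $X \cap H_0$ has degree $\geq 3$; this in turn rests on weakening the hypothesis of [BHS, Lemma 9] to ``$X \cap H$ has no component of degree $\leq e$,'' which suffices to force the contradiction at the end of that lemma's proof. Your step (i), peeling off the boundary stratum $X \cap V(x_0)$ and bounding its integer points, is vacuous: points of the form $(1:x_1:x_2:x_3)$ never lie on $V(x_0)$. The hypothesis on components of $V(F)$ enters not through integer points at infinity, but through the limiting behaviour of lines and conics in $X$ at the hyperplane at infinity.
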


We achieve this generalization by noting that the hypotheses in \cite[Lemma 9]{BHS} can be weakened. In the statement of that lemma, it suffices to assume that $X \cap H$ has no irreducible component of degree at most $e$. Indeed, that is enough to provoke the contradiction in the last line of the proof of the lemma. 


From this we immediately get the following strengthening of \cite[Prop. 1]{BHS}. 

\begin{prop}
\label{prop:BHSprop1}
Let $X \subset\PP^3_{\bar\QQ}$ be an integral surface of degree $d \geq 3$ such that every irreducible component of $X \cap H_0$ (with its reduced scheme structure) has degree at least $3$. Let $D=1$ or $2$, let $I_D$ be any finite set of integral curves $C \subset X$ of degree $D$, and let $\Sigma = \bigcup_{I_D} C$. Then we have the estimate
\begin{multline*}
\#\left\lbrace (x_1,x_2,x_3) \in \ZZ^3 \cap [-B,B]^3;\ (1:x_1,x_2,x_3) \in \Sigma(\QQ)\right\rbrace  \\
\ll_{d,\epsi} B^{\epsi} \max \{B^{2/d},B^{1/D},\#I_D\}
\end{multline*}
\end{prop}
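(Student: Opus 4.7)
The plan is to follow the proof of \cite[Prop.~1]{BHS} essentially verbatim, substituting our strengthened Theorem \ref{thm:BHS2} wherever the original \cite[Thm.~2]{BHS} is invoked. Because the hypothesis of Theorem \ref{thm:BHS2} has been relaxed to ``the leading form has no linear or quadratic factor,'' the hypothesis on $X \cap H_0$ in the present proposition can be correspondingly relaxed to ``all irreducible components have degree at least $3$,'' which is exactly what is stated.

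The structure of the argument has three parts. First, since every irreducible component of $X \cap H_0$ has degree at least $3 > D$, no curve $C \in I_D$ can lie in $H_0$, so $C \cap H_0$ is a finite set of at most $D$ points. A direct application of the determinant method for space curves (cf.\ \cite[Thm.~5]{BHS}) then yields $\ll_{d,\epsi} B^{1/D+\epsi}$ integer points on each individual $C$, giving the trivial bound $\#I_D \cdot B^{1/D+\epsi}$ for the total count on $\Sigma$. This already suffices in the regime where $\#I_D$ is bounded (absorbing into the $B^{1/D}$ term). Second, for the remaining regime, one uses a linear pencil of hyperplanes $\{H_t\}_{t \in \PP^1}$ through a generic auxiliary line $L \subset \PP^3$, so that each integer point of the relevant affine chart lies on a unique slice $X \cap H_t$. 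A Bertini-type argument as in \cite[Lemma~9]{BHS} — whose hypotheses can be weakened precisely as noted in the remark just before the statement of Proposition \ref{prop:BHSprop1} — shows that for all but finitely many $t$, the slice $X \cap H_t$ is an integral plane curve of degree $d$ whose leading form has no linear or quadratic factor. Theorem \ref{thm:BHS2} therefore applies to give a per-slice estimate of $\ll_{d,\epsi} B^{1+\epsi}$. Third, summing this estimate over the $O(B)$ slices $H_t$ that can contain an integer point of the box, and treating the finitely many degenerate slices (which contain entire curves from $I_D$) separately via the bound of the first step, one arrives at the stated estimate.

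The main technical obstacle, as in \cite{BHS}, is the verification via the weakened form of \cite[Lemma~9]{BHS} that the generic plane section inherits the ``no linear or quadratic factor'' property from $X \cap H_0$; this verification is precisely the content of the remark preceding the proposition, and is the only genuine modification of the BHS argument. Once this geometric input is in hand, the remainder of the BHS proof of Proposition 1 transfers without essential change and yields the desired $B^\epsi \max\{B^{2/d}, B^{1/D}, \#I_D\}$ bound.
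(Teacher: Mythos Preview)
Your proposal has the logical dependence backwards, and is therefore circular. In \cite{BHS} (and in this paper), Proposition~1 is an \emph{ingredient} in the proof of Theorem~2, not a consequence of it: the determinant method places the integral points on $O_{d,\epsi}(B^{2/\sqrt d+\epsi})$ curves of bounded degree, and Proposition~1 is then invoked precisely to control the contribution from those curves that happen to be lines or conics. Accordingly, the paper derives Proposition~\ref{prop:BHSprop1} \emph{first}, directly from the weakened form of \cite[Lemma~9]{BHS} noted just before the statement, and only afterwards deduces Theorem~\ref{thm:BHS2} by substituting Proposition~\ref{prop:BHSprop1} for \cite[Prop.~1]{BHS} inside the original proof. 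Invoking Theorem~\ref{thm:BHS2} to prove Proposition~\ref{prop:BHSprop1}, as you propose, would close a loop.

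Even ignoring the circularity, your ``second step'' does not type-check. The slices $X \cap H_t$ are plane curves, whereas Theorem~\ref{thm:BHS2} concerns affine surfaces (polynomials in three variables); a per-slice bound of $B^{1+\epsi}$ summed over $O(B)$ slices would in any case give nothing useful. More to the point, the quantity to be estimated here is the number of integer points on the union $\Sigma$ of lines or conics, not on $X$, so slicing $X$ is beside the point. The genuine proof of \cite[Prop.~1]{BHS} never appeals to \cite[Thm.~2]{BHS}; the only external input is \cite[Lemma~9]{BHS}, whose weakening you correctly identify as the sole modification needed. The actual argument is therefore a one-liner: run the proof of \cite[Prop.~1]{BHS} verbatim with the weakened Lemma~9 in place of the original.
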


In the proof of \cite[Thm. 2]{BHS}, one uses Heath-Brown's determinant method to prove that the integral points of height at most $B$ on the surface $S$ defined by $f=0$ are contained in $O_{d,\epsi}(B^{2/\sqrt{d}})$ curves  on $S$ of bounded degree. It is only in handling the contribution from lines and conics that the irreducibility of $F$ is used. For the rest of the proof, it suffices to assume that $f$ itself is irreducible. Thus, letting Proposition \ref{prop:BHSprop1} play the role of \cite[Prop. 1]{BHS} in the proof of \cite[Thm. 2]{BHS}, we immediately deduce Theorem \ref{thm:BHS2}.

\begin{proof}[Proof of Theorem \ref{thm:dimgrowth}]
Let $X = V(f) \subset \AA^n$ and let $Y$ be any integral component of $V(f,g) \subset \AA^n$. Note that $X$ is geometrically integral by assumption, so the dimension of $Y$ is $n-2$. We shall prove that 
\begin{equation}
\label{eq:dimconj}
N(Y,B) \ll_{n,d,\epsi}
\begin{cases}
B^{n-3+1/12 + \epsi}, & 4 \leq n \leq 11,\\
B^{n-3+\epsi}, & n \geq 12.
\end{cases}  
\end{equation}

Let $\bar X, \bar Y \subset \PP^n$ be the respective projective closures. Let $X_0 = \bar X \cap H_0$ and $Y_0 = \bar Y \cap H_0$. Our hypothesis implies that $X_0$ is non-singular.  Then, as observed in \cite[Lemma 6.2]{Salberger05}, any closed subscheme of $(X_0)_{\bar \QQ}$ of pure codimension one is the intersection of $(X_0)_{\bar \QQ}$ with a hypersurface $G \subset \PP^{n-1}$. This is a consequence of the Noether-Lefschetz theorem (use \cite[Cor. 3.3, p. 180]{Hartshorne70} and the following exercises). Thus, every integral component of $(Y_0)_{\bar \QQ}$ has degree divisible by $d$, and in particular $d \mid \deg Y$.

Let $d' = \deg Y$. We can assume that $Y$ is geometrically integral. Indeed, in case $Y$ is integral, but not geometrically integral, one can argue as in the proof of \cite[Thm. 2.1]{Salberger05} to conclude that all rational points on $Y$ lie on a proper subvariety, obtained as the intersection of all the irreducible components of $Y_{\bar \QQ}$. Thus we can use a trivial estimate (e.g. Proposition \ref{prop:trivial} for suitably chosen $q$) to conclude that
\begin{equation}
\label{eq:trivial}
N(Y,B) \ll_{n,d} B^{n-3}.
\end{equation}

We have the following result, which we shall prove in Section \ref{subsec:projection}.  

\begin{prop}
\label{prop:projection}
Let $X \subset \AA^n_{\bar \QQ}$ be an integral closed subvariety of dimension $m \leq n-2$ and degree $d$. Let $\bar X \subset \PP^n_{\bar \QQ}$ be its projective closure and $X_0 = \bar X \cap H_0$. Then there exists an $((m+1) \times n)$-matrix $A$ and an $(m+1)$-vector $\bb$, with integer entries of size $O_{n,d}(1)$, such that the morphism
$\pi: X \to \AA^{m+1}_{\bar \QQ}$ given by $\xx \mapsto A \xx + \bb$ is birational onto its image. In particular, its fibres consist of at most $d$ points, and $\pi(X)$ is an integral closed hypersurface of degree $d$.

Moreover, it induces a morphism $\bar\pi: \bar X \to \PP^{m+1}_{\bar\QQ}$ with the following property. If
\[
X_0 = X_{0,1} \cup \dotsb \cup X_{0,k}
\]
is the decomposition of $X_0$ into irreducible components and
$\deg X_{0,i} = d_i$, then $\bar\pi:X_{0,i} \to \PP^{m+1}_{\bar\QQ}$ is birational onto its image for each $i$. In particular 
\[
\bar\pi(X) \cap H_0 = \bar\pi(X_0) = \bar\pi(X_{0,1}) \cup \dotsb \cup \bar\pi(X_{0,k}),
\]
where each $\bar\pi(X_{0,i})$ is integral of dimension $m-1$ and degree $d_i$.  
\end{prop}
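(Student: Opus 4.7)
My plan is to construct $\pi$ via an integral affine-linear map $\xx \mapsto A\xx + \bb$ with $A \in \ZZ^{(m+1)\times n}$ and $\bb \in \ZZ^{m+1}$, and take $\bar\pi$ to be its projective extension $(x_0:\xx) \mapsto (x_0 : A\xx + x_0 \bb)$. This extension is a morphism on $\PP^n_{\bar\QQ}$ outside the linear center $\Lambda_A = V(A) \cap H_0$ of dimension $n-m-2$ (where $V(A)$ denotes the subspace cut out by the rows of $A$, viewed as linear forms in $x_1,\dotsc,x_n$). The properties demanded by the proposition translate into conditions on the entries of $A$ alone, since $\bb$ only translates images and therefore affects neither disjointness with $\Lambda_A$ nor birationality; the task is then to realize these conditions by an integer matrix of controlled size.

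First I would verify that the following are all nonempty Zariski open subsets of the affine space $\mathrm{Mat}_{(m+1)\times n}(\bar\QQ) \cong \AA^{(m+1)n}_{\bar\QQ}$: (i) $\bar X \cap \Lambda_A = \varnothing$; (ii) $\bar\pi|_{\bar X} : \bar X \to \bar\pi(\bar X)$ is birational; and (iii) for each of the $k \leq d$ integral components $X_{0,i}$ of $X_0$, $\bar\pi|_{X_{0,i}}$ is birational onto its image. Condition (i) is a standard dimension count, using $\dim \bar X + \dim \Lambda_A = n-2 < n$. Conditions (ii) and (iii) reduce to the classical generic projection theorem (see e.g.\ Mumford's \emph{Red Book}) applied to $\bar X$ and to each $X_{0,i}$; the hypothesis $m \leq n-2$ (respectively $m-1 \leq n-2$) ensures that a birational projection to $\PP^{m+1}_{\bar\QQ}$ exists.

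Next I would control the degrees of the complementary closed loci. Each condition above cuts out a Zariski closed subset of $\mathrm{Mat}_{(m+1)\times n}(\bar\QQ)$ defined by polynomials of degree $O_{n,d}(1)$ in the entries of $A$: for (i) one can take a suitable Chow form of $\bar X$ evaluated on the rows of $A$; for (ii) and (iii) one obtains polynomial conditions of bounded degree via resultants of Chow forms (or, explicitly, by requiring that the generic fibre of $\bar\pi$ on the relevant variety consist of a single geometric point, a condition encoded by a discriminant). Taking the union over the at most $d+1$ varieties $\bar X, X_{0,1}, \dotsc, X_{0,k}$, the total bad locus is contained in a hypersurface of $\AA^{(m+1)n}_{\bar\QQ}$ of degree $D = O_{n,d}(1)$.

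The main obstacle is producing an integer matrix of small entries in the complement of this hypersurface. For this I would invoke a standard lattice-point argument: a nonzero polynomial of degree $D$ in $N$ variables has at most $D(2M+1)^{N-1}$ integer zeros in the cube $\{-M,\dotsc,M\}^N$, while the cube contains $(2M+1)^N$ lattice points, so any $M > D$ produces an integer point off the hypersurface. Applied with $N = (m+1)n$ this yields the desired matrix $A$ with entries of size $O_{n,d}(1)$; one may simply take $\bb = \0$. Finally I would verify the listed consequences: birationality of the finite morphism $\pi|_X$ forces each fibre to have at most $\deg X = d$ points, and $\pi(X) \subset \AA^{m+1}_{\bar\QQ}$ is an integral hypersurface of degree $d$ since a finite birational morphism between integral projective varieties preserves degree. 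For the projective part, the disjointness $\bar X \cap \Lambda_A = \varnothing$ together with the identity $\bar\pi^{-1}(H_0) \setminus \Lambda_A = H_0 \setminus \Lambda_A$ yields $\bar\pi(\bar X) \cap H_0 = \bar\pi(X_0)$, and the component-wise decomposition $\bar\pi(X_0) = \bigcup_i \bar\pi(X_{0,i})$ together with birationality of each restriction $\bar\pi|_{X_{0,i}}$ gives $\dim \bar\pi(X_{0,i}) = m-1$ and $\deg \bar\pi(X_{0,i}) = d_i$, as required.
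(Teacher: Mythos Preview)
Your approach is correct and close in spirit to the paper's, but the organization differs in a way worth noting. The paper parameterizes by the projection center $\Lambda \in G_0 := \{\Lambda \in \GG(n-m-2,n) : \Lambda \subset H_0\}$ and, using the sets $Y_Z,\,Y'_Z$ from \cite[\S 3]{BHS}, observes the inclusion $(Y_{\bar X} \cup Y'_{\bar X}) \cap G_0 \subseteq Y_{X_0} \cup Y'_{X_0}$. Thus it only needs to avoid the single bad locus $Y_{X_0} \cup Y'_{X_0}$ inside $G_0$; a mild extension of \cite[Lemma~6]{BHS} to equidimensional (not necessarily integral) $X_0$ gives the degree bound, and \cite[Lemma~3]{BHS} then furnishes a $\Lambda$ of bounded height. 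Avoiding $Y_{X_0} \cup Y'_{X_0}$ automatically yields birationality both on $\bar X$ and on every component $X_{0,i}$.

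You instead parameterize by the matrix $A$, list the bad conditions for $\bar X$ and for each $X_{0,i}$ separately, bound each by a hypersurface of degree $O_{n,d}(1)$, take the union, and use a Schwartz--Zippel count to find an integer $A$ in the complement. This is a perfectly valid and slightly more self-contained route; it trades the paper's single clean inclusion for an explicit union over at most $d+1$ conditions. The one place where your write-up is thinner than the paper's is the degree bound for the non-birationality locus: you gesture at ``resultants of Chow forms'' and ``discriminants'', whereas the paper simply invokes the ready-made \cite[Lemma~6]{BHS}. If you want your argument to stand on its own, that step would benefit from either a precise reference or a short explicit construction.
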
 

Thus we find a geometrically integral hypersurface $W =\pi(Y) \subset \AA^{n-1}$ of degree $d'$ such that $N(Y,B) \leq d \cdot N(W,cB)$ for some constant $c \ll_{n,d} 1$, and such that if $\bar W \subset \PP^{n-1}$ is the projective closure and $W_0 = \bar W \cap H_0$ (taken with its reduced scheme structure), then $(W_0)_{\bar \QQ}$ has no irreducible component of degree less than $d$.

It is a standard fact \cite[Lemma 7]{BHS} that we can find a hyperplane $H \subset \PP^{n-1}$, defined by a linear form with integer coefficients of size $O_{n,d}(1)$, such that the intersection of $H$ with $W$ or any of the irreducible components of $(W_0)_{\bar \QQ}$ is again irreducible. Indeed, the set $\cE \subset \check{\PP}^{n-1}$ of hyperplanes $H$ such that this fails is a proper closed subscheme of degree $O_{n,d}(1)$. After a suitable change of variables (sending $H_0$ to itself), we can assume that $H$ is given by $x_{n-1}=0$. Letting $H_a \subset \AA^{n-1}$, for any $a \in \ZZ$ be the hyperplane given by $x_{n-1} = a$, and putting $W_a = W \cap H_a$, we have
\[
N(W,B) = \sum_{a=-c'B}^{c'B} N(W_a,B).
\]
For all but $O_{n,d}(1)$ values of $a$, $\bar W_a$ is geometrically irreducible, and $\bar W_a \cap H_0$ has no irreducible components over $\bar \QQ$ of degree less than $d$. Indeed, let $\cH$ be the linear pencil of hyperplanes $\lambda H + \mu H_0$ parameterized by $(\lambda:\mu)\in \PP^1$. Since $\cH \nsubseteq \cE$, we have $\dim (\cH \cap \cE) = 0$. The exceptional values of $a$ yield an acceptable contribution to \eqref{eq:dimconj} by a trivial estimate for $N(W_a,B)$. 

Applying this process inductively, much as in \cite[\S 4]{BHS}, we find a collection of $O_{n,d}(B^{n-4})$ geometrically irreducible surfaces $S \subset \AA^3$ of degree $d'$ such that the curve $S_0 = \bar S \cap H_0$ has no components over $\bar \QQ$ of degree less than $d$, and such that the estimate
\[
N(W,B) \leq \sum_{S} N(S,c''B) + O_{n,d}(B^{n-3}).
\]
holds for some constant $c'' \ll_{n,d} 1$.

There are now two cases to consider. 

\textbf{Case 1:}
$d' \geq 2d$. Then Theorem \ref{thm:BHS2} yields the estimate
\[
N(S,B) \ll_{d,\epsi} B^{1+\epsi},
\]
which suffices to establish the desired bound for $N(Y,B)$.

\textbf{Case 2:} $d' = d$. Then the estimate given by Theorem \ref{thm:BHS2} is
\[
N(Y,B) \ll_{n,d,\epsi} B^{n-3+1/12+\epsi}.
\]

For large $n$, we shall derive a better estimate by applying our main result inductively. 

In the present case we necessarily have $Y_0 = X_0 \cap \Gamma_0$ for some hyperplane $\Gamma_0 \subset \PP^{n-1}$. But then we must also have $\bar Y = \bar X \cap \Gamma$ for some hyperplane $\Gamma \subset \PP^{n}$. 

(Indeed, let $\mathcal G$ be the family of $\Gamma \in \GG(n-1,n)$ such that $\Gamma_0 \subset \Gamma$. Then $Y_0 \subseteq \bar Y \cap \Gamma$ for every $\Gamma \in \mathcal G$, and the inclusion has to be strict for some $\Gamma$. If $\Gamma$ would intersect $\bar Y$ properly, then we would have $\bar Y \cap \Gamma = Y_0 \cup Z$ for some closed subscheme $Z \subset \bar Y$ of codimension one. But this would contradict the fact that $\deg (\bar Y \cap \Gamma) = d = \deg Y_0$. Thus we conclude that $\bar Y \subseteq \Gamma$.)

Thus, in this case we have $Y = X \cap L$ for some hyperplane $L \subset \AA^n$. Now $\Lambda := L \cap \ZZ^n$ is a lattice of dimension $r \leq n-1$. Thus, by \cite[Lemma 1]{Heath-Brown02}, it has a basis $\bb_1,\dotsc,\bb_r$ such that for every $\xx = \sum \lambda_j \bb_j \in \Lambda \cap [-B,B]^n$ we have $\lambda_j \ll |\xx|/|\bb_j|\ll B$. Thus we get a bijection between $Y(\ZZ,B)$ and $Y'(\ZZ,cB)$, where $c \ll_{n,d} 1$, $h(\yy) = f(\sum y_j \bb_j)$ and $Y' = V(h) \subset \AA^{r}$.

By Lemma \ref{lem:Siegel} we can assume that $\Vert h \Vert \ll_{n,d} B^\theta$, since in case \eqref{enum:auxpoly} we can use a trivial estimate as in \eqref{eq:trivial} to conclude that
\begin{equation}
N(Y,B) \leq N(h,cB) \ll_{n,d} B^{r-2} \leq B^{n-3}.
\end{equation}

Since $X_0$ is non-singular, it is well known \cite[Appendix, Thm. 2]{Hooley} that $Y_0$ can have at most isolated singularities. Thus, the same holds for the closed subscheme $Y'_0 \subset \PP^{r}$ defined by the leading form of $h$. Then we can find a hyperplane $\Pi \subset \PP^r$, defined by a linear form with integer coefficients of size $O_{n,d}(1)$, such that $Y'_0 \cap \Pi$ is non-singular. For a proof of this 'effective' version of Bertini's theorem, see \cite[Lemma 2.8]{Marmon}. After a suitable linear transformation we may assume that $\Pi$ is given by $y_r = 0$. For any $a \in \ZZ$, let $h_a \in \ZZ[y_1,\dotsc,y_r]$ be given by $h_a(y_1,\dotsc,y_{r-1}) = h(y_1,\dotsc,y_{r-1},a)$. All the polynomials $h_a$ then have the same non-singular leading form. Thus we can apply the estimate of Theorem \ref{thm:main'} to get
\[
N(h_a,B) \ll_{n,d} B^{n-6 + (37(n-2) -18)/((n-2)^2 + 8(n-2)-4)}.
\]
This yields an estimate
\begin{align*}
N(Y,B) &\leq N(h,cB) \ll_{n,d} \sum_{a=-cB}^{cB} N(h_a,cB) \\
&\ll_{n,d} B^{n-5 + (37n-92)/(n^2 + 4n - 16)}.
\end{align*}
As soon as $n \geq 12$, we get $N(Y,B) \ll_{n,d} B^{n-3}$.
\end{proof}


\subsection{Birational projections of bounded height}
\label{subsec:projection}

Let $Z \subset \PP^n_{\bar \QQ}$ be an integral closed subvariety of dimension $m \leq n-2$. Throughout this section, we work over $\bar \QQ$, but henceforth we shall omit this subscript. Let $\Lambda$ be an $(n-m-2)$-plane and $\Gamma$ an $(m+1)$-plane such that $\Lambda \cap \Gamma = \varnothing$. We recall the construction of the projection 
\[
\pi_{\Lambda,\Gamma}:\PP^n \setminus \Lambda \to \Gamma
\]
from $\Lambda$ to $\Gamma$ (see \cite[Lecture 3]{Harris}). Identifying $\Gamma$ with $\PP^{m+1}$, we write $\pi_\Lambda:\PP^n \dashrightarrow \PP^{m+1}$. It is known that for a generic $\Lambda \in \GG(n-m-2,n)$, the projection $\pi_{\Lambda}\vert_Z:Z \to \PP^{m+1}$ is birational onto its image. In particular $\pi(Z)$ is integral and $\deg \pi_\Lambda(Z) = \deg Z$. In \cite[\S 3]{BHS} it is shown that one can also find such a projection where $\Lambda$ is defined over $\QQ$ and of bounded height. We shall need an affine version of that statement. 

Let us recall the notation of \cite[\S 3]{BHS}. Let $Z \subset \PP^n$ be a closed subvariety of dimension $m$ and degree $d$. For any $\Lambda \in \GG(n-m-2,n)$, define
\begin{gather*}
S_{\Lambda,Z} = \{ M \in \GG(n-m-1,n); \Lambda \subset M, \#(M \cap Z) \geq 2 \},\\
Y_Z = \{\Lambda \in \GG(n-m-2,n); \dim S_{\Lambda,Z} \geq m\}, \\
Y'_Z= \{\Lambda \in \GG(n-m-2,n); \Lambda \cap Z \neq \varnothing \}
\end{gather*}
Then, in case $Z$ is integral, it is shown \cite[Lemma 6]{BHS} that $Y_Z$ and $Y'_Z$ are proper closed subvarieties of $\GG(n-m-2,n)$ of degree $O_{n,d}(1)$. Furthermore, as soon as $\Lambda \notin Y_Z \cup Y'_Z$, the morphism $\pi_{\Lambda}:Z \to \PP^{m+1}$ is birational onto its image, and its fibres consist of at most $d$ points. 

Let now $X \subset \AA^n$ be an integral closed subvariety of dimension $m$ and degree $d$, and let $Z \subset \PP^n$ be its projective closure. As soon as $\Lambda \subset H_0$, the projection $\pi_\Lambda$ maps $\AA^n = \PP^n \setminus H_0$ into $\AA^{m+1}$. Thus, let
\[
G_0 = \{ \Lambda \in \GG(n-m-2,n); \Lambda \subset H_0\} \cong \GG(n-m-2,n-1).
\]
It is easy to see that
\begin{equation}
\label{eq:exceptionalG0}
(Y_Z \cup Y'_Z)\cap G_0 \subseteq Y_{Z_0} \cup Y'_{Z_0},
\end{equation}
where $Z_0 = Z \cap H_0$. Applying the arguments of \cite{BHS}, we deduce that $Y_{Z_0} \cup Y'_{Z_0}$ is a proper closed subvariety of $\GG(n-m-2,n-1)$ of degree $O_{n,d}(1)$. However, since $Z_0$ is not necessarily integral, this requires the following generalization of \cite[Lemma 6]{BHS}, the proof of which is straightforward:

\begin{lemma}
\label{lem:BHSLemma6'}
Suppose that the closed subvariety $Z\subseteq \PP^n$ is equidimensional of dimension $m$ and degree $d$. Then $Y_Z$ is a proper closed subvariety of $\GG(n-m-2,n)$ of degree $O_{d,n}(1)$.
\end{lemma}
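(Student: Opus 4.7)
The plan is to reduce to the integral case \cite[Lemma 6]{BHS} by decomposing $Z$ into its irreducible components $Z = Z_1 \cup \dotsb \cup Z_k$. Each $Z_i$ is then integral of dimension $m$ with some degree $d_i$, satisfying $\sum_i d_i = d$ and $k \leq d$, so the integral case gives that every $Y_{Z_i}$ is a proper closed subvariety of $\GG(n-m-2,n)$ of degree $O_{n,d}(1)$. The key set-theoretic observation is the inclusion
\[
Y_Z \;\subseteq\; \bigcup_i Y_{Z_i} \;\cup\; \bigcup_{i \neq j} Y_{ij},
\]
where $Y_{ij}$ is the closed locus of $\Lambda$ such that the set $T_{ij}(\Lambda) := \{M \in \GG(n-m-1,n) : \Lambda \subset M,\ M \cap Z_i \neq \varnothing,\ M \cap Z_j \neq \varnothing\}$ has dimension at least $m$. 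Indeed, any $M \in S_{\Lambda, Z}$ contains two distinct points of $Z$ which either both lie in a single component $Z_i$ (putting $M \in S_{\Lambda, Z_i}$) or in two distinct components (putting $M \in T_{ij}(\Lambda)$). The remaining task is thus to control each $Y_{ij}$.

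To analyse $Y_{ij}$ I would work with the projection $\pi_\Lambda : \PP^n \dashrightarrow \PP^{m+1}$ identifying $\{M : \Lambda \subset M\}$ with $\PP^{m+1}$, under which $T_{ij}(\Lambda)$ corresponds to $\overline{\pi_\Lambda(Z_i)} \cap \overline{\pi_\Lambda(Z_j)}$. For $\Lambda$ outside the (already controlled) loci $Y_{Z_i} \cup Y_{Z_j} \cup Y'_{Z_i} \cup Y'_{Z_j}$, each $\overline{\pi_\Lambda(Z_\ell)}$ is an integral hypersurface of degree $d_\ell$ in $\PP^{m+1}$, so the condition $\dim T_{ij}(\Lambda) \geq m$ forces $\overline{\pi_\Lambda(Z_i)} = \overline{\pi_\Lambda(Z_j)}$. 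Since $Z_i \neq Z_j$, one may pick $P_0 \in Z_i \setminus Z_j$; such an equality would in particular force $\pi_\Lambda(P_0) \in \overline{\pi_\Lambda(Z_j)}$, equivalently $\langle \Lambda, P_0 \rangle \cap Z_j \neq \varnothing$.

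The latter is the condition that $\Lambda$ meet the cone $C_{P_0}(Z_j) \subset \PP^n$ over $Z_j$ with vertex $P_0$, a closed subvariety of dimension $m+1$ and degree $d_j$. This is a classical Schubert-type condition cutting out a proper closed subvariety of $\GG(n-m-2,n)$ of codimension $1$ and degree $O_{n,d}(1)$. Hence $Y_{ij}$ is contained in a finite union of proper closed subvarieties of $\GG(n-m-2,n)$ of degree $O_{n,d}(1)$, and is therefore itself such a subvariety. Summing the bounds over the $k \leq d$ loci $Y_{Z_i}$ and the $O(d^2)$ loci $Y_{ij}$ preserves the $O_{n,d}(1)$ bound and proves the lemma. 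The main bookkeeping obstacle will be verifying the Schubert degree bound for the locus $\{\Lambda : \Lambda \cap C_{P_0}(Z_j) \neq \varnothing\}$ in the Plücker embedding of $\GG(n-m-2,n)$; but since $\deg C_{P_0}(Z_j) = d_j \leq d$, this reduces to a standard Chern-class computation and is indeed straightforward.
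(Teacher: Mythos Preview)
The paper omits the proof entirely, calling it ``straightforward'' --- the intended meaning is presumably that the argument of \cite[Lemma~6]{BHS} carries over verbatim: closedness and the degree bound for $Y_Z$ come from the fact that $Y_Z$ is cut out in Pl\"ucker coordinates by equations whose degrees depend only on $n$ and $d$ (this part of the original argument never uses integrality), while properness follows because a \emph{generic} linear projection $\pi_\Lambda:Z\to\PP^{m+1}$ is birational onto its image, which for equidimensional $Z$ just means that it is birational on each component and that the images of distinct components are distinct. Your decomposition $Y_Z\subseteq\bigcup_i Y_{Z_i}\cup\bigcup_{i\neq j}Y_{ij}$ together with the cone argument for the cross-terms $Y_{ij}$ is a correct and essentially equivalent way to establish properness; your cone argument \emph{is} precisely the proof that distinct components have distinct images under a generic projection.

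There is, however, one genuine slip. From ``$Y_{ij}$ is contained in a finite union of proper closed subvarieties of degree $O_{n,d}(1)$'' you conclude ``therefore $Y_{ij}$ itself has degree $O_{n,d}(1)$'', and similarly you infer $\deg Y_Z=O_{n,d}(1)$ by summing the bounds for the containing loci. Containment does not bound degree: a subvariety of a degree-$D$ variety can have arbitrarily large degree if its codimension is larger. What your argument actually proves is that $Y_Z$ is contained in a proper closed set of degree $O_{n,d}(1)$, which suffices for the application (finding a rational $\Lambda\notin Y_Z$ of bounded height via \cite[Lemma~3]{BHS}) but not for the lemma as stated. To get $\deg Y_Z=O_{n,d}(1)$ you should instead note, as in the integral case, that $Y_Z$ is defined by a fibre-dimension condition on a bounded-degree incidence correspondence, hence by equations of bounded degree in the Pl\"ucker coordinates; this step is independent of whether $Z$ is integral. (A minor point: the cone $C_{P_0}(Z_j)$ need not have degree exactly $d_j$, only at most $d_j$, but that is all you use.)
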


By \cite[Lemma 3]{BHS} we can therefore find an $(n-m-2)$-plane $\Lambda \notin Y_{Z_0} \cup Y'_{Z_0}$ that is defined over $\QQ$ and of bounded height. The projection $\pi_\Lambda:Z \to \PP^{m+1}$ is then birational onto its image. Moreover,  $\pi_\Lambda:Z_{0,i} \to \PP^{m+1}$ is birational onto its image for each irreducible component $Z_{0,i}$ of $Z_0$.

Finally, choosing $\Gamma$ as explicitly described in \cite{BHS}, it is evident that $\pi_\Lambda$ maps integral points of height at most $B$ in $\AA^n$ to integral points of height $O_{n,d}(B)$ in $\AA^{m+1}$. This finishes our proof of Proposition \ref{prop:projection}.

\bibliographystyle{plain}
\bibliography{ratpoints}
\end{document}